\newtheorem{Definition}{Definition}[section]
\newtheorem{Lemma}{Lemma}[section]
\newtheorem{Theorem}{Theorem}[section]
\newtheorem{Proposition}{Proposition}[section]
\newtheorem{Remark}{Remark}[section]
\def \C {\mathbbm{C}}
\def\b{\ensuremath\mathbf}
\def\c{\ensuremath\mathcal}
\def\h{\ensuremath\widehat}
\DeclareRobustCommand{\rchi}{{\mathpalette\irchi\relax}}
\newcommand{\irchi}[2]{\raisebox{\depth}{$#1\chi$}}
\title{Blind Ptychography via Blind Deconvolution}
\author{Mark Philip Roach}
\begin{document}

\setlength{\belowcaptionskip}{10pt}
\setlength{\belowdisplayskip}{20pt} 
\setlength{\abovedisplayskip}{10pt} 


\maketitle


\abstract{Ptychography involves a sample being illuminated by a coherent, localised probe of illumination. When the probe interacts with the sample, the light is diffracted and a diffraction pattern is detected. Then the probe or sample is shifted laterally in space to illuminate a new area of the sample while ensuring there is sufficient overlap. \textbf{Far-field Ptychography} occurs when there is a large enough distance (when the Fresnel number is $\ll 1$) to obtain magnitude-square Fourier transform measurements. In an attempt to remove ambiguities, masks are utilized to ensure unique outputs to any recovery algorithm are unique up to a global phase. In this paper, we assume that both the sample and the mask are unknown, and we apply blind deconvolutional techniques to solve for both. Numerical experiments demonstrate that the technique works well in practice, and is robust under noise.


\section{Introduction}
Ptychography involves a sample being illuminated by a coherent, localised probe of illumination. When the probe interacts with the sample, the light is diffracted and a diffraction pattern is detected. Then the probe or sample is shifted laterally in space to illuminate a new area of the sample while ensuring there is sufficient overlap.
\textbf{Far-field Ptychography} occurs when there is a large enough distance (when the Fresnel number is $\ll 1$) to obtain magnitude-square Fourier transform measurements.\\
\indent Ptychography was initially studied in the late 1960s (\cite{hoppe1969diffraction}), with the problem solidified in 1970 (\cite{hegerl1970dynamische}). The name "Ptychography" was coined in 1972 (\cite{hegerl1972phase}), after the Greek word \textit{to fold} because the process involves an interference pattern such that the scattered waves fold into one anotherthe (coherent) Fourier diffraction pattern of the object. Initially developed to study crystalline objects under a scanning transmission electron microscope, since then the field has widen to setups such as using visible light (\cite{clark2014continuous}, \cite{huang2015fly}, \cite{odstrvcil2018arbitrary}), x-rays (\cite{edo2013sampling}, \cite{tsai2016x}, \cite{pfeiffer2018x}), or electrons (\cite{yang2016simultaneous},\cite{gao2017electron}\cite{jiang2018electron}). It is benefited from being  unaffected by lens-induced aberrations or diffraction effects unlike conventional lens imaging. Various types of ptychography are studied based on the optical configuration of the experiments. For instance, Bragg Ptychography (\cite{godard2011imaging} \cite{takahashi2013bragg}, \cite{hruszkewycz2017high}, \cite{li2021revealing}) measures strain in crystalline specimens by shifting the surface of the specimen.\\
\indent Fourier ptychography (\cite{zheng2013wide},\cite{tian2014multiplexed},\cite{ou2015high},\cite{zheng2021concept}) consists of taking multiple images at a wide field-of-view then computationally synthesizing into a high-resolution image reconstruction in the Fourier domain. This results in an increased resolution compared to a conventional microscope. \begin{figure}[H]
\centering
\includegraphics[width=1\textwidth]{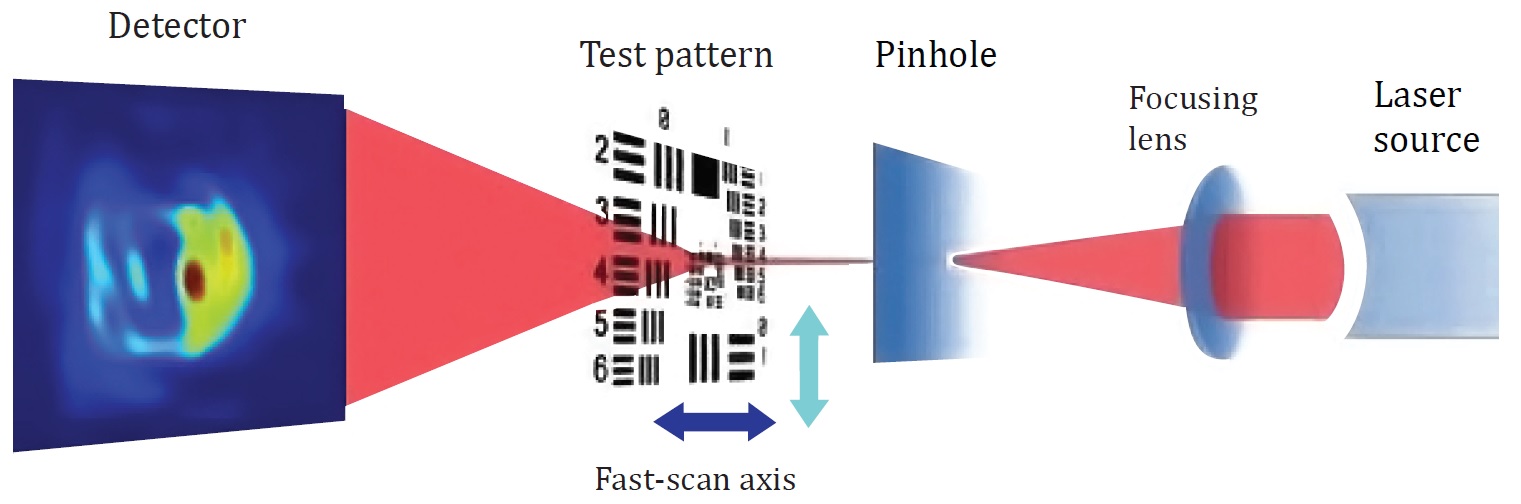}
\caption{\cite{huang2015fly} Experimental setup for fly-scan ptychography}
\end{figure}
\section{Far-field Fourier Ptychography} \label{sec: fffp}
Let $\mathbf{x}, \mathbf{m} \in \mathbb{C}^{d}$ denote the unknown sample and known mask, respectively. We suppose that we have $d^2$ noisy ptychographic measurements of the form
\begin{align} \label{eqn: far field measurements}
(\mathbf{Y})_{\ell,k} = |(\mathbf{F}(\mathbf{x} \circ S_k \mathbf{m}))_\ell|^2 + \; (\mathbf{N})_{\ell,k}, \quad (\ell,k) \in [d]_0 \times [d]_0,
\end{align}
where $S_k, \circ, \mathbf{F} := \mathbf{F}_d$ denote $k^{\text{th}}$ circular shift, Hadamard product, and $d$-dimensional discrete Fourier transform, and $\mathbf{N}$ is the matrix of additive noise.\\
\indent In this section, we will define a discrete Wigner distribution deconvolution method for recovering a discrete signal.  A modified Wigner distribution deconvolution approach is used to solve for an estimate of  $\hat{\b{x}}\hat{\b{x}}^* \in \mathbb{C}^{d \times d}$ and then angular synchronization is performed to compute estimate of $\hat{\b{x}}$ and thus $\b{x}$.\\
\indent In Section \ref{sec: Properties of the Discrete Fourier Transform}, we introduce definitions and techincal lemmas which will be of use. In particular, the decoupling lemma (Lemma \ref{lem: Decoupling}) allows use to effectively 'separate' the mask and object from a convolution. In Section \ref{sec: Discretized Wigner Distribution Deconvolution}, these technical lemmas are applied to the ptychographic measurements to write the problem as a decoupled deconvolution problem, the blind variant of which will be studied later on. In Section \ref{sec:Wigner Distribution Deconvolution Algorithm}, an additional Fourier transform is applied and the measurements have been rewritten to a form in which a pointwise division approach can be applied. Sub-sampled version of this theorem are also given. We then state the full algorithm for recovering the sample.
\subsection{Properties of the Discrete Fourier Transform} \label{sec: Properties of the Discrete Fourier Transform}
We firstly define the modulation operator.
\begin{Definition}
Given $k \in [d]_0$, define the \textbf{modulation operator} $W_k : \mathbb{C}^{d} \longrightarrow \mathbb{C}^{d}$ component-wise via
\begin{align}
(W_k \mathbf{x})_n = x_n e^{2\pi i k n/d}, \quad \forall n \in [d]_0.
\end{align}
\end{Definition}
From this definition, we can develop some useful equalities which we will use in the main proofs of this section.
\begin{Lemma} \textbf{(Technical Equalities)}  (Lemma 1.3.1., \cite{merhi2019phase}) The following equalities hold for all $\mathbf{x} \in \mathbb{C}^{d}, [\ell] \in [d]_0$:\\
(i) $\b{F}_d \hat{\mathbf{x}} = d \cdot \tilde{\mathbf{x}}$;\\
(ii) $\b{F}_d (W_\ell \mathbf{x}) = S_{-\ell} \hat{\mathbf{x}}$;\\
(iii) $\b{F}_d  (S_\ell \b{x}) = W_\ell \h{\b{x}}$;\\
(iv) $W_{-\ell} \b{F}_d (S_\ell \bar{\tilde{\b{x}}}) = \bar{\hat{\b{x}}}$;\\
(v) $\overline{\widetilde{S_\ell \b{x}}} = S_{-\ell} \bar{\tilde{\b{x}}}$;\\
(vi) $\b{F}_d \bar{\b{x}} = \overline{\b{F}_d \tilde{\b{x}}}$;\\
(vii) $\tilde{\hat{\b{x}}} = \b{F}_d \tilde{\b{x}}$.
\end{Lemma}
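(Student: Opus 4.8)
The plan is to verify each identity directly at the level of components, reading off the action of each operator from its definition and then matching the two sides index by index. Throughout I write $\hat{\b{x}} = \b{F}_d \b{x}$ with $(\b{F}_d\b{x})_k = \sum_{n \in [d]_0} x_n e^{-2\pi i kn/d}$, take the involution to be $(\tilde{\b{x}})_n = x_{(-n) \bmod d}$, and use the shift and modulation operators as defined above, with all indices read modulo $d$. Before touching the list I would record the single nontrivial ingredient, namely the orthogonality of the DFT characters,
\begin{align}
\sum_{k \in [d]_0} e^{2\pi i k (n-m)/d} = d \, \mathbbm{1}[n \equiv m \bmod d],
\end{align}
which is just the evaluation of a finite geometric series (it equals $d$ when $n \equiv m$ and telescopes to $0$ otherwise). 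This is the only place where anything beyond bookkeeping happens.

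With orthogonality in hand, identity (i) follows by writing $(\b{F}_d\hat{\b{x}})_n = \sum_k \big(\sum_m x_m e^{-2\pi i km/d}\big) e^{-2\pi i kn/d}$, interchanging the two finite sums, and applying the character sum to collapse the inner sum to $d$ exactly when $m \equiv -n$, which leaves $d\, x_{-n} = d\,(\tilde{\b{x}})_n$. Identity (vii) is obtained the same way, or more cheaply by noticing that $(\tilde{\hat{\b{x}}})_k = \hat{x}_{-k} = \sum_n x_n e^{2\pi i kn/d}$ and that the right-hand side $(\b{F}_d \tilde{\b{x}})_k = \sum_n x_{-n} e^{-2\pi i kn/d}$ becomes the same expression after the substitution $n \mapsto -n$.

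Identities (ii), (iii), (v) and (vi) are pure change-of-variable computations. For (ii) I substitute the definition of $W_\ell$ into $\b{F}_d$ and combine the exponentials, producing $\sum_n x_n e^{-2\pi i (k-\ell)n/d} = \hat{x}_{k-\ell}$, which is exactly the corresponding shifted component; (iii) is its dual, obtained by shifting the summation index and pulling the resulting phase $e^{2\pi i k\ell/d}$ out front as a modulation factor. For (v) I chase the three operators through their definitions, $(\overline{\widetilde{S_\ell \b{x}}})_n = \overline{x_{-n+\ell}}$, and check the right-hand side equals the same thing; (vi) follows by conjugating $\b{F}_d\tilde{\b{x}}$, which flips the sign of every exponent, and then substituting $n \mapsto -n$ to recover $\b{F}_d\bar{\b{x}}$.

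Finally, (iv) I would deduce rather than recompute: apply (iii) with the signal $\overline{\tilde{\b{x}}}$ to get $\b{F}_d(S_\ell \overline{\tilde{\b{x}}}) = W_\ell \b{F}_d(\overline{\tilde{\b{x}}})$, so that the outer $W_{-\ell}$ cancels the $W_\ell$ and leaves $\b{F}_d(\overline{\tilde{\b{x}}})$; then invoke (vi) with the signal $\tilde{\b{x}}$, using $\widetilde{\tilde{\b{x}}} = \b{x}$, to identify $\b{F}_d(\overline{\tilde{\b{x}}}) = \overline{\b{F}_d \b{x}} = \overline{\hat{\b{x}}}$. The only thing to watch throughout is that every index equality is read modulo $d$ and that the sign conventions in the definitions of $\b{F}_d$, $S_k$ and $W_k$ are kept consistent; once those are fixed, the main (and essentially only) obstacle is the geometric-sum evaluation used in (i), and the remaining parts are mechanical.
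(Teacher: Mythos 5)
Your proof is correct, but note there is nothing in the paper to compare it against: the paper states this lemma with only a citation to Lemma 1.3.1 of \cite{merhi2019phase} and gives no proof, so your argument fills a genuine omission rather than paralleling an existing one. Checking it against the paper's conventions (the shift convention $(S_k \mathbf{x})_n = x_{n+k}$, implicit in the paper's proof of the decoupling lemma where $(\mathbf{x} \circ S_{-\ell}\mathbf{y})_n = x_n y_{n-\ell}$, and the reversal $(\tilde{\mathbf{x}})_n = x_{-n}$ with indices mod $d$), every componentwise identity you assert holds: the character orthogonality sum is indeed the only non-mechanical ingredient and is needed exactly where you invoke it, in (i); identities (ii), (iii), (v), (vi), (vii) are the change-of-variable computations you describe; and your derivation of (iv) purely from (iii), (vi), the cancellation $W_{-\ell}W_\ell = I$, and $\widetilde{\tilde{\mathbf{x}}} = \mathbf{x}$ is clean and avoids recomputing a Fourier sum.
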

We wish to be able to convert between the convolution and the Hadamard product, so we will need this useful theorem.
\begin{Theorem} \textbf{(Discretized Convolution Theorem)} (Lemma 1.3.2., \cite{merhi2019phase}) Let $\b{x}, \b{y} \in \mathbb{C}^{d}$. We have that\\
(i) $F_{d}^{-1} (\hat{\b{x}} \circ \hat{\b{y}}) = \b{x} *_d \b{y}$;\\
(ii) $(\b{F}_d \b{x}) *_d (\b{F}_d \b{y}) = d \cdot \b{F}_d (\b{x} \circ \b{y})$.
\end{Theorem}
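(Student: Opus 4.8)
The plan is to prove (i) by a direct coordinate computation and then to obtain (ii) either by the same kind of expansion or, more economically, by feeding (i) back into itself together with the Technical Equalities lemma. Throughout I read all indices modulo $d$ and use the conventions $(\b{F}_d\b{x})_\ell = \sum_{n\in[d]_0} x_n e^{-2\pi i \ell n/d}$, $(\hat{\b{x}}\circ\hat{\b{y}})_\ell = \hat{x}_\ell\,\hat{y}_\ell$, and the circular convolution $(\b{x}*_d\b{y})_n = \sum_{m\in[d]_0} x_m\, y_{n-m}$, where $y_{n-m}$ is indexed cyclically.

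For (i), since $\b{F}_d$ is invertible it suffices to prove the equivalent identity $\b{F}_d(\b{x}*_d\b{y}) = \hat{\b{x}}\circ\hat{\b{y}}$ and then apply $\b{F}_d^{-1}$ to both sides. First I would expand
\begin{align}
\big(\b{F}_d(\b{x}*_d\b{y})\big)_\ell = \sum_{n\in[d]_0}\sum_{m\in[d]_0} x_m\, y_{n-m}\, e^{-2\pi i \ell n/d},
\end{align}
then substitute $p = n-m$ (read mod $d$) so that $e^{-2\pi i \ell n/d} = e^{-2\pi i \ell m/d}e^{-2\pi i \ell p/d}$, and swap the order of summation. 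Because the sequences are treated as $d$-periodic, as $n$ ranges over $[d]_0$ the new index $p$ also ranges over a complete residue system mod $d$; the double sum therefore factors as $\big(\sum_m x_m e^{-2\pi i \ell m/d}\big)\big(\sum_p y_p e^{-2\pi i \ell p/d}\big) = \hat{x}_\ell\,\hat{y}_\ell$, which is exactly $(\hat{\b{x}}\circ\hat{\b{y}})_\ell$.

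For (ii) I would expand $\big((\b{F}_d\b{x})*_d(\b{F}_d\b{y})\big)_n$ in the same way, collect all the exponentials over the inner convolution index $m$, and invoke the character orthogonality relation $\sum_{m\in[d]_0} e^{2\pi i m(b-a)/d} = d\,\delta_{a,b}$ (with the Kronecker delta interpreted mod $d$). This collapses the double sum over $a,b$ to the diagonal $a\equiv b$, leaving $d\sum_{a} x_a y_a\, e^{-2\pi i n a/d} = d\,(\b{F}_d(\b{x}\circ\b{y}))_n$. Alternatively, (ii) follows from (i) applied to $\hat{\b{x}},\hat{\b{y}}$: writing $\b{F}_d(\hat{\b{x}}*_d\hat{\b{y}}) = (\b{F}_d\hat{\b{x}})\circ(\b{F}_d\hat{\b{y}})$ and using $\b{F}_d\hat{\b{x}} = d\,\tilde{\b{x}}$ from the Technical Equalities lemma, together with the facts that the flip $\b{x}\mapsto\tilde{\b{x}}$ commutes with both the Hadamard product and $\b{F}_d$ and that the double flip is the identity. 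I expect the proof to be entirely routine; the only genuine care is the modular bookkeeping — verifying that the change of variables $p=n-m$ still sweeps out a full residue system (which is precisely where the circular/periodic structure is used) and, in (ii), that the character sum collapses under $a\equiv b \pmod d$ rather than strict integer equality.
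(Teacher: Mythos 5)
Your proof is correct. Note, however, that the paper does not actually prove this theorem: it is stated as a quoted result (Lemma 1.3.2 of the cited thesis \cite{merhi2019phase}) and the text immediately moves on to the Decoupling Lemma, so there is no in-paper argument to compare against. Your direct computation for (i) — reducing to $\b{F}_d(\b{x}*_d\b{y}) = \hat{\b{x}}\circ\hat{\b{y}}$ via the change of variables $p = n-m$, which is legitimate precisely because the indices are cyclic — and your character-orthogonality argument for (ii) are both the standard and correct arguments; the modular bookkeeping you flag is indeed the only point of care. Your alternative derivation of (ii) from (i) also goes through: applying $\b{F}_d$ to both sides, using $\b{F}_d\hat{\b{u}} = d\tilde{\b{u}}$ (Technical Equality (i)) and the compatibility of the reversal with $\b{F}_d$ and the Hadamard product (Technical Equality (vii)), both sides become $d^2\,\widetilde{\b{x}\circ\b{y}}$, and injectivity of $\b{F}_d$ finishes it; this second route has the small advantage of reusing the paper's Technical Equalities rather than a fresh orthogonality computation.
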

Currently, the measurements we are dealing with will be having the specimen and the mask intertwined. We introduce the decoupling lemma to essentially detangle the two.
\begin{Lemma} \textbf{(Decoupling Lemma)} (Lemma 1.3.3., \cite{merhi2019phase}) \label{lem: Decoupling}

Let $\b{x}, \b{y} \in \mathbb{C}^{d}, \ell, k \in [d]_0$. Then
\begin{align}
\bigg( (\b{x} \circ S_{-\ell} \b{y}) *_d (\bar{\tilde{\b{x}}} \circ S_\ell \bar{\tilde{\b{y}}})\bigg)_k = \bigg( (\b{x} \circ S_{-k} \bar{\b{x}}) *_d (\tilde{y} \circ S_k \bar{\tilde{\b{y}}})\bigg)_\ell.
\end{align}
\end{Lemma}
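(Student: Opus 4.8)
The plan is to prove the identity by a direct, definition-level expansion of both sides, reducing each to a single explicit sum over $[d]_0$ and observing that the two sums are identical. The only ingredients needed are the definitions of circular convolution $(\b{a} *_d \b{b})_j = \sum_{n \in [d]_0} a_n\, b_{j-n}$ (indices mod $d$), the Hadamard product, the reversal $(\tilde{\b{x}})_n = x_{-n}$, complex conjugation, and the circular shift, which --- to be consistent with the technical equalities (ii)--(iii) above --- I take to be $(S_j \b{v})_n = v_{n+j}$. No appeal to the Fourier-domain identities is actually required, though they secretly explain why the identity holds.

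First I would expand the left-hand side as a convolution evaluated at $k$, then write out the two Hadamard factors. The $n$-th summand is $x_n\,(S_{-\ell}\b{y})_n \cdot (\bar{\tilde{\b{x}}})_{k-n}\,(S_\ell \bar{\tilde{\b{y}}})_{k-n}$. Applying the shift and reversal definitions gives $(S_{-\ell}\b{y})_n = y_{n-\ell}$, $(\bar{\tilde{\b{x}}})_{k-n} = \bar{x}_{n-k}$, and $(S_\ell \bar{\tilde{\b{y}}})_{k-n} = \bar{y}_{n-k-\ell}$, so the left-hand side collapses to $\sum_{n \in [d]_0} x_n\,\bar{x}_{n-k}\,y_{n-\ell}\,\bar{y}_{n-\ell-k}$.

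Next I would expand the right-hand side in the same way, now as a convolution evaluated at $\ell$ (reading the $\tilde{y}$ in the statement as $\tilde{\b{y}}$). Its $m$-th summand is $x_m\,(S_{-k}\bar{\b{x}})_m \cdot (\tilde{\b{y}})_{\ell-m}\,(S_k \bar{\tilde{\b{y}}})_{\ell-m}$, and the substitutions $(S_{-k}\bar{\b{x}})_m = \bar{x}_{m-k}$, $(\tilde{\b{y}})_{\ell-m} = y_{m-\ell}$, $(S_k \bar{\tilde{\b{y}}})_{\ell-m} = \bar{y}_{m-\ell-k}$ turn it into $\sum_{m \in [d]_0} x_m\,\bar{x}_{m-k}\,y_{m-\ell}\,\bar{y}_{m-\ell-k}$. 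Renaming $m \to n$ shows this is exactly the sum found for the left-hand side, which proves the claim.

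The calculation is routine, so the main obstacle is purely one of bookkeeping: correctly tracking how the reversal sends $x_j \mapsto x_{-j}$ and how it composes both with the shifts and with the outer convolution index $k-n$ (respectively $\ell-m$), all modulo $d$. Getting the sign of every shift right is exactly what forces both expansions onto the common symmetric form $\sum_n x_n\,\bar{x}_{n-k}\,y_{n-\ell}\,\bar{y}_{n-\ell-k}$; once this is checked the equality is immediate. Conceptually, this symmetric sum is an ambiguity-function-type correlation of $\b{x}$ and $\b{y}$ that can be read either as a convolution in the variable $k$ carrying a shift $\ell$, or as a convolution in the variable $\ell$ carrying a shift $k$ --- which is the content of the lemma.
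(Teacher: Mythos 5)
Your proof is correct and is essentially the paper's own argument: the paper likewise expands the left-hand convolution directly from the definitions, rewrites the summand as $x_n\,\bar{x}_{n-k}\,\tilde{y}_{\ell-n}\,\bar{\tilde{y}}_{k+\ell-n}$ (your $x_n\,\bar{x}_{n-k}\,y_{n-\ell}\,\bar{y}_{n-\ell-k}$), and regroups it as the convolution evaluated at $\ell$. Expanding both sides to this common symmetric sum instead of transforming one side into the other is an immaterial difference, and your reading of the shift convention $(S_j\b{v})_n = v_{n+j}$ and of the statement's $\tilde{y}$ as $\tilde{\b{y}}$ matches the paper's usage.
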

\begin{proof} Let $\b{x}, \b{y} \in \mathbb{C}^{d}, \ell, k \in [d]_0$. By the definitions of the circular convolution, Hadamard product and shift operator, we have that
\begin{align}\nonumber
\bigg( (\b{x} \circ S_{-\ell} \b{y}) *_d (\bar{\tilde{\b{x}}} \circ S_\ell \bar{\tilde{\b{y}}})\bigg)_k &= \sum_{n=0}^{d-1}(\b{x} \circ S_{-\ell} \b{y}) _n((\bar{\tilde{\b{x}}} \circ S_\ell \bar{\tilde{\b{y}}})_{k-n}&\\\nonumber
&= \sum_{n=0}^{d-1} x_n y_{n-\ell} \bar{\tilde{x}}_{k-n} \bar{\tilde{y}}_{\ell + k -n}&\\
&= \sum_{n=0}^{d-1} x_n \bar{x}_{n-k} \tilde{y}_{\ell - n} \bar{\tilde{y}}_{k + \ell - n}&\\\nonumber
&= \sum_{n=0}^{d-1}(\b{x} \circ S_{-k} \b{x}) _n((\tilde{\b{y}} \circ S_k \bar{\tilde{\b{y}}})_{\ell-n}&\\\nonumber
&= \bigg( (\b{x} \circ S_{-k} \bar{\b{x}}) *_d (\tilde{y} \circ S_k \bar{\tilde{\b{y}}})\bigg)_\ell.&
\end{align}
\end{proof}
Lastly before entering the main part of this subsection, we need a lemma involving looking at how the Fourier squared magnitude measurements will relate to a convolution. 
\begin{Lemma} Let $\b{x} \in \mathbb{C}^{d}$. We have that
\begin{align}
|\b{F}_d \b{x}|^2 = \b{F}_d (\b{x} *_d \bar{\tilde{\b{x}}}).
\end{align}
\end{Lemma}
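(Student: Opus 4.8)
The plan is to avoid expanding sums by hand and instead leverage the two results already established: the Discretized Convolution Theorem and the technical equalities. The identity to be proved is a statement about the Fourier transform of a self-correlation, so the natural route is to recognize $\b{x} *_d \bar{\tilde{\b{x}}}$ as precisely the object whose transform factors as $\h{\b{x}}$ times its own conjugate, turning a convolution on one side into a pointwise squared modulus on the other.

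First I would rewrite part (i) of the Discretized Convolution Theorem in the forward direction: applying $\b{F}_d$ to both sides of $\b{F}_d^{-1}(\h{\b{x}} \circ \h{\b{y}}) = \b{x} *_d \b{y}$ gives $\b{F}_d(\b{x} *_d \b{y}) = \h{\b{x}} \circ \h{\b{y}}$. Specializing $\b{y} = \bar{\tilde{\b{x}}}$ reduces the transform of the convolution to $\h{\b{x}} \circ \b{F}_d(\bar{\tilde{\b{x}}})$, so the whole statement comes down to identifying the single factor $\b{F}_d(\bar{\tilde{\b{x}}})$.

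The second step is to show $\b{F}_d(\bar{\tilde{\b{x}}}) = \overline{\h{\b{x}}}$. Here I would invoke technical equality (vi), namely $\b{F}_d \bar{\b{x}} = \overline{\b{F}_d \tilde{\b{x}}}$, applied with $\tilde{\b{x}}$ in place of $\b{x}$, together with the involution property $\tilde{\tilde{\b{x}}} = \b{x}$ (immediate from $\tilde{x}_n = x_{-n}$). This yields $\b{F}_d \bar{\tilde{\b{x}}} = \overline{\b{F}_d \tilde{\tilde{\b{x}}}} = \overline{\b{F}_d \b{x}} = \overline{\h{\b{x}}}$. Substituting back gives $\b{F}_d(\b{x} *_d \bar{\tilde{\b{x}}}) = \h{\b{x}} \circ \overline{\h{\b{x}}}$, and since the Hadamard product of a vector with its own conjugate is the component-wise squared modulus, this equals $|\h{\b{x}}|^2 = |\b{F}_d \b{x}|^2$, as required.

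The only real care needed is bookkeeping of the normalization and the conjugate–reflection interaction: I must apply the convolution theorem in the direction that produces no stray factor of $1/d$, and confirm that the reversal $\tilde{\cdot}$ in equality (vi) composes with conjugation exactly as claimed. This is the step I would double-check most carefully. If one prefers not to rely on (vi), the identity $\b{F}_d \bar{\tilde{\b{x}}} = \overline{\h{\b{x}}}$ can instead be verified directly from the definition in a single line via the index substitution $n \mapsto -n$ in the defining DFT sum, which is precisely where the reversal cancels the sign flip introduced by conjugation; everything else is formal.
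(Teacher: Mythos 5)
Your proposal is correct and follows essentially the same route as the paper: the paper's one-line proof chains $|\b{F}_d \b{x}|^2 = (\b{F}_d \b{x}) \circ \overline{(\b{F}_d \b{x})} = (\b{F}_d \b{x}) \circ (\b{F}_d \bar{\tilde{\b{x}}}) = \b{F}_d (\b{x} *_d \bar{\tilde{\b{x}}})$, which uses exactly your two ingredients (the identity $\overline{\b{F}_d \b{x}} = \b{F}_d \bar{\tilde{\b{x}}}$ and the convolution theorem), merely written in the opposite direction. Your version is slightly more explicit about which technical equality justifies the conjugate--reversal step, but it is the same argument.
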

\begin{proof} Let $\b{x} \in \mathbb{C}^{d}$. Then we have that
\begin{align}
|\b{F}_d \b{x}|^2 = (\b{F}_d \b{x}) \circ \overline{(\b{F}_d \b{x})} = (\b{F}_d \b{x}) \circ (\b{F}_d \bar{\tilde{\b{x}}}) = \b{F}_d (\b{x} *_d \bar{\tilde{\b{x}}}).
\end{align}
\end{proof}
\subsection{Discretized Wigner Distribution Deconvolution} \label{sec: Discretized Wigner Distribution Deconvolution}
We now prove the Discretized Wigner Distribution Deconvolution theorem which will allow us to convert the measurements into a form in which we can algorithmically solve.
\begin{Theorem} (Lemma 1.3.5., \cite{merhi2019phase}) \label{thm: BlindPtych Measurements} Let $\mathbf{x}, \b{m} \in \mathbb{C}^{d}$ denote the unknown specimen and known mask, respectively. Suppose we have $d^2$ noisy ptychographic measurements of the form
\begin{align}
(\b{y}_\ell)_k = \bigg| \sum_{n=0}^{d-1} x_n m_{n - \ell} e^{-2\pi i n k/d}\bigg|^2 + (\mathbf{N})_{\ell,k}, \quad (\ell,k) \in [d]_0 \times [d]_0.
\end{align}
Let $\b{Y} \in \mathbb{R}^{d \times d}, \b{N} \in \mathbb{C}^{d \times d}$ be the matrices whose $\ell^{th}$ column is $\b{y}_\ell, \mathbf{N}_{\ell}$ respectively. Then for any $k \in [d]_0$,
\begin{align}
\bigg( \b{Y}^T \b{F}_{d}^{T}\bigg)_k = d \cdot (\b{x} \circ S_k \bar{\b{x}}) *_d  (\tilde{\b{m}} \circ S_{-k} \bar{\tilde{\b{m}}})  + \bigg( \b{N}^T \b{F}_{d}^{T}\bigg)_k.
\end{align}
\end{Theorem}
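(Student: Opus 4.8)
The plan is to process a single measurement column $\b{y}_\ell$ with the Fourier and convolution machinery of Section~\ref{sec: Properties of the Discrete Fourier Transform}, and to invoke the decoupling lemma only at the very end. First I would recognise the inner sum as a discrete Fourier transform: since $m_{n-\ell}=(S_\ell\b{m})_n$, the noiseless part of $(\b{y}_\ell)_k$ is exactly $|(\b{F}_d(\b{x}\circ S_\ell\b{m}))_k|^2$. Writing $\b{z}_\ell:=\b{x}\circ S_\ell\b{m}$ and applying the squared-magnitude identity $|\b{F}_d\b{z}_\ell|^2=\b{F}_d(\b{z}_\ell *_d \bar{\tilde{\b{z}}}_\ell)$ collapses each column into a single Fourier transform of an autocorrelation, giving $\b{y}_\ell=\b{F}_d(\b{z}_\ell *_d \bar{\tilde{\b{z}}}_\ell)+\b{N}_\ell$.

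Next I would simplify $\bar{\tilde{\b{z}}}_\ell$. Because reflection distributes over the Hadamard product and $\overline{\widetilde{S_\ell\b{m}}}=S_{-\ell}\bar{\tilde{\b{m}}}$ by equality (v) of the Technical Equalities lemma, one obtains $\bar{\tilde{\b{z}}}_\ell=\bar{\tilde{\b{x}}}\circ S_{-\ell}\bar{\tilde{\b{m}}}$, so that $\b{z}_\ell *_d \bar{\tilde{\b{z}}}_\ell=(\b{x}\circ S_\ell\b{m}) *_d (\bar{\tilde{\b{x}}}\circ S_{-\ell}\bar{\tilde{\b{m}}})$. This is precisely the left-hand side of the decoupling lemma (Lemma~\ref{lem: Decoupling}) with $\b{y}=\b{m}$ and shift parameter $-\ell$, in which the object and mask are still entangled inside a single convolution.

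Now I would carry out the matrix algebra. A direct index computation shows that the $k$-th column of $\b{Y}^T\b{F}_d^T$ has $\ell$-th entry $(\b{F}_d\b{y}_\ell)_k$, so applying a second $\b{F}_d$ to each $\b{y}_\ell$ and using equality (i) in the form $\b{F}_d\hat{\b{w}}=d\tilde{\b{w}}$ collapses $\b{F}_d\b{F}_d$ to $d$ times a reflection, returning $d$ times the autocorrelation $\b{z}_\ell *_d \bar{\tilde{\b{z}}}_\ell$, plus the linear noise contribution $(\b{N}^T\b{F}_d^T)_k$ carried along by linearity. Finally I would invoke the decoupling lemma with $\b{y}=\b{m}$ to exchange the roles of $\ell$ and $k$: this rewrites $(\b{x}\circ S_\ell\b{m}) *_d (\bar{\tilde{\b{x}}}\circ S_{-\ell}\bar{\tilde{\b{m}}})$ as $(\b{x}\circ S_k\bar{\b{x}}) *_d (\tilde{\b{m}}\circ S_{-k}\bar{\tilde{\b{m}}})$, separating the object autocorrelation $\b{x}\circ S_k\bar{\b{x}}$ from the mask autocorrelation $\tilde{\b{m}}\circ S_{-k}\bar{\tilde{\b{m}}}$ and yielding the claimed $d\cdot(\b{x}\circ S_k\bar{\b{x}}) *_d (\tilde{\b{m}}\circ S_{-k}\bar{\tilde{\b{m}}})$.

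The main obstacle is bookkeeping rather than ideas: every step mixes a shift $S_{\pm\ell}$, a reflection $\tilde{\cdot}$, and a conjugation, and two distinct reflections enter the argument — one from the squared-magnitude/autocorrelation identity pushed through $\b{F}_d\b{F}_d$, and one implicit in the $\ell\leftrightarrow k$ exchange of the decoupling lemma. These must be reconciled so that the shifts land with the correct signs ($S_k$ on $\bar{\b{x}}$ and $S_{-k}$ on $\bar{\tilde{\b{m}}}$) and the convolution is read off at the intended index; here the conjugate-reflection symmetry of the autocorrelations, which relates $\overline{(\b{x}\circ S_{-k}\bar{\b{x}})}$ to a shifted reflection of $\b{x}\circ S_k\bar{\b{x}}$, is what keeps the two forms consistent. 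An alternative, more pedestrian route avoids the operator identities entirely: expand $(\b{F}_d\b{y}_\ell)_k$ as the double sum $d\sum_b x_{b-k}\bar{x}_b\,m_{b-k-\ell}\bar{m}_{b-\ell}$ and re-index to recognise the decoupled convolution directly, which trades the operator lemmas for a single delicate substitution.
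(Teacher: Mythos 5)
Your strategy coincides with the paper's own proof: use the squared-magnitude lemma to write each column as $\b{F}_d$ of an autocorrelation, apply a second $\b{F}_d$ so that equality (i) collapses $\b{F}_d\b{F}_d$ into $d$ times a reversal, invoke the decoupling lemma to trade the spatial index $\ell$ for the frequency index $k$, and finish with the transpose bookkeeping. The ingredients and their order are identical; the issue is that the bookkeeping you yourself flag as the main obstacle goes wrong at the first step.

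Concretely, your opening identification $m_{n-\ell}=(S_\ell\b{m})_n$ presumes the convention $(S_k\b{x})_n=x_{n-k}$, whereas the paper's convention --- fixed by the index computation inside the proof of Lemma \ref{lem: Decoupling}, where $(S_{-\ell}\b{y})_n=y_{n-\ell}$ --- is $(S_k\b{x})_n=x_{n+k}$, so that $m_{n-\ell}=(S_{-\ell}\b{m})_n$. This is not a harmless relabelling, because Lemma \ref{lem: Decoupling} is \emph{not} invariant under flipping the shift convention: reading its symbols with your convention, its left side at index $k$ becomes $\sum_n x_n\bar{x}_{n-k}\,y_{n+\ell}\bar{y}_{n+\ell-k}$ while its right side at index $\ell$ becomes $\sum_n x_n\bar{x}_{n+k}\,y_{n-\ell}\bar{y}_{n-\ell+k}$, and these disagree in general (take $d=3$, $\b{x}=(1,0,0)$, $\b{y}=(0,1,2)$, $k=0$, $\ell=1$: the values are $|y_1|^2=1$ and $|y_2|^2=4$). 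So the step ``invoke the decoupling lemma with $\b{y}=\b{m}$ and shift parameter $-\ell$'' appeals to an identity that is false under your reading; and if one pushes the symbols through regardless, the final expression $(\b{x}\circ S_k\bar{\b{x}}) *_d (\tilde{\b{m}}\circ S_{-k}\bar{\tilde{\b{m}}})$, interpreted in your convention, equals the true value of column $-k$ (the conjugate of the claimed column, since the noiseless $\b{Y}$ is real), not of column $k$.

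The repair is one line, after which your argument becomes the paper's proof verbatim: set $m_{n-\ell}=(S_{-\ell}\b{m})_n$, so that $\b{z}_\ell *_d \bar{\tilde{\b{z}}}_\ell=(\b{x}\circ S_{-\ell}\b{m}) *_d (\bar{\tilde{\b{x}}}\circ S_\ell\bar{\tilde{\b{m}}})$ is the left side of Lemma \ref{lem: Decoupling} with parameter $+\ell$; then evaluate at $-k$ (the reversal produced by $\b{F}_d\b{F}_d=d\cdot$ reversal) and the lemma yields exactly $\big((\b{x}\circ S_k\bar{\b{x}}) *_d (\tilde{\b{m}}\circ S_{-k}\bar{\tilde{\b{m}}})\big)_\ell$. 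It is worth noting that your fallback expansion $d\sum_b x_{b-k}\bar{x}_b\,m_{b-k-\ell}\bar{m}_{b-\ell}$ is the correct raw identity and matches the theorem under the paper's convention, which confirms that the defect is purely in the operator labelling rather than in the intended mathematics.
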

\begin{proof} Let $\ell \in [d]_0$. We have that
\begin{align}
\b{y}_\ell = |F_d (\b{x} \circ S_{-\ell} \b{m})|^2 + \mathbf{N}_{\ell} = F_d \bigg( (\b{x} \circ S_{-\ell}\b{m}) *_d (\bar{\tilde{\b{x}}} \circ S_\ell \bar{\tilde{\b{m}}})\bigg) + \mathbf{N}_{\ell}.
\end{align}
Taking Fourier transform of both sides at $k \in [d]_0$ and using that $\b{F}_d \hat{\mathbf{x}} = d \cdot \tilde{\mathbf{x}}$ yields
\begin{align}
(\b{F}_d \b{y}_\ell)_k &= d \cdot \bigg( (\b{x} \circ S_{-\ell}\b{m}) *_d (\bar{\tilde{\b{x}}} \circ S_\ell \bar{\tilde{\b{m}}})\bigg)_{-k} + (\b{F}_d \mathbf{N}_{\ell})_k &\\\nonumber
&= d \cdot \bigg( (\b{x} \circ S_{k}\overline{\b{x}}) *_d (\tilde{\b{m}} \circ S_{-k} \bar{\tilde{\b{m}}})\bigg)_{\ell} + (\b{F}_d\mathbf{N}_{\ell})_k&,
\end{align}
by previous lemma. For fixed $\ell \in [d]_0$, the vector $\b{F}_d \b{y}_\ell \in \mathbb{C}^{d}$ is the $\ell^{\text{th}}$ column of the matrix $\b{F}_d \b{Y}$, thus its transpose $\b{y}_{\ell}^T \b{F}_{d}^T \in \mathbb{C}^{d}$ is the $\ell^{\text{th}}$ row of the matrix $(\b{F}_d \b{Y})^T$. Similarly, $((\mathbf{N})_{\ell})^{T} \b{F}_{d}^{T} \in \mathbb{C}^{d}$ is the $\ell^{\text{th}}$ row of $(\b{F}_d \b{N})^T$. Thus we have that
\begin{align}
\bigg(\bigg(\b{Y}^T \b{F}_{d}^{T}\bigg)_k\bigg)_\ell = d \cdot \bigg( (\b{x} \circ S_k \bar{\b{x}}) *_d  (\tilde{\b{m}} \circ S_{-k} \bar{\tilde{\b{m}}})\bigg)_\ell + \bigg(\b{N}^T \b{F}_{d}^{T}\bigg)_{k,\ell}.
\end{align}
Thus we have that
\begin{align}
\bigg( \b{Y}^T \b{F}_{d}^{T}\bigg)_k = d \cdot (\b{x} \circ S_k \bar{\b{x}}) *_d  (\tilde{\b{m}} \circ S_{-k} \bar{\tilde{\b{m}}}) + \bigg( \b{N}^T \b{F}_{d}^{T}\bigg)_k.
\end{align}
\end{proof}
We note that $\b{x} \circ S_k \bar{\b{x}}$ is a diagonal of $\b{x}\b{x}^*$.
\subsection{Wigner Distribution Deconvolution Algorithm} \label{sec:Wigner Distribution Deconvolution Algorithm} 
We suppose that the mask is known and the specimen is unknown. By taking an additional Fourier transform and using the discretized convolution theorem, we have these variances of the previous lemmas.
\begin{Theorem} \textbf{(Discretized Wigner Distribution Deconvolution)} Let $\mathbf{x}, \b{m} \in \mathbb{C}^{d}$ denote the unknown specimen and known mask, respectively. Suppose we have $d^2$ noisy spectrogram measurements of the form
\begin{align}
(\b{y}_\ell)_k = \bigg| \sum_{n=0}^{d-1} x_n m_{n - \ell} e^{-2\pi i n k/d}\bigg|^2 + (\mathbf{N})_{\ell,k} , \quad (\ell,k) \in [d]_0 \times [d]_0.
\end{align}
Let $\b{Y} \in \mathbb{R}^{d \times d}$ be the matrix whose $\ell^{th}$ column is $\b{y}_\ell$. Then for any $k \in [d]_0$
\begin{align}
\b{F}_d \bigg( \b{Y}^T \b{F}_{d}^T\bigg)_k = d \cdot \b{F}_d (\b{x} \circ S_k \bar{\b{x}}) \circ \b{F}_d (\tilde{\b{m}} \circ S_{-k} \bar{\tilde{\b{m}}}) + \b{F}_d \bigg( \b{N}^T \b{F}_{d}^T\bigg)_k.
\end{align}
\end{Theorem}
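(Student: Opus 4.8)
The plan is to take the conclusion of the immediately preceding theorem, which already expresses $\bigl(\b{Y}^T \b{F}_d^T\bigr)_k$ as $d$ times a single circular convolution plus a transformed noise term, and simply apply one further discrete Fourier transform $\b{F}_d$ to both sides. Since the right-hand side of that identity is a sum of two terms, I would first invoke linearity of $\b{F}_d$ to distribute the transform across the signal term $d \cdot (\b{x} \circ S_k \bar{\b{x}}) *_d (\tilde{\b{m}} \circ S_{-k} \bar{\tilde{\b{m}}})$ and the noise term $\bigl(\b{N}^T \b{F}_d^T\bigr)_k$. The noise contribution is simply carried along as $\b{F}_d \bigl(\b{N}^T \b{F}_d^T\bigr)_k$, which already matches the final summand in the claim verbatim, so all of the substantive work concerns the signal term.

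For the signal term the key tool is part (i) of the Discretized Convolution Theorem, $F_d^{-1}(\h{\b{x}} \circ \h{\b{y}}) = \b{x} *_d \b{y}$. Applying $\b{F}_d$ to both sides of this identity rearranges it into $\b{F}_d(\b{a} *_d \b{b}) = (\b{F}_d \b{a}) \circ (\b{F}_d \b{b})$ for arbitrary $\b{a}, \b{b} \in \C^d$; that is, the transform of a circular convolution is the Hadamard product of the individual transforms. I would then specialize to $\b{a} = \b{x} \circ S_k \bar{\b{x}}$ and $\b{b} = \tilde{\b{m}} \circ S_{-k} \bar{\tilde{\b{m}}}$ and pull the scalar $d$ through, producing $d \cdot \b{F}_d(\b{x} \circ S_k \bar{\b{x}}) \circ \b{F}_d(\tilde{\b{m}} \circ S_{-k} \bar{\tilde{\b{m}}})$, which is exactly the first summand of the claimed identity.

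The step I would watch most carefully is the bookkeeping of the scalar factor $d$ and the choice of the correct variant of the convolution theorem. It is tempting to reach for part (ii), $(\b{F}_d \b{x}) *_d (\b{F}_d \b{y}) = d \cdot \b{F}_d(\b{x} \circ \b{y})$, but that identity turns a Hadamard product into a convolution and would introduce a spurious second factor of $d$; here the quantity in hand is already a convolution whose transform is sought, so part (i) is the appropriate statement and no new factor of $d$ is generated. Recombining the transformed signal term with the untouched noise term then reproduces the stated equation exactly, which completes the argument.
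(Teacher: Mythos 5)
Your proof is correct and takes essentially the same route as the paper, which presents this theorem with exactly the one-line justification you expand on: apply an additional $\b{F}_d$ to the identity of the preceding theorem and invoke the discretized convolution theorem to convert the transformed convolution into a Hadamard product of transforms. Your attention to using part (i) rather than part (ii) of that theorem, so that no spurious extra factor of $d$ appears, is precisely the right bookkeeping.
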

We also have a similar result based on the work in Appendix \ref{BP Appendix}.
\begin{Lemma}\textbf{(Sub-Sampling In Frequency)} Suppose that the spectrogram measurements are collected on a subset $\mathcal{K} \subseteq [d]_0$ of $K$ equally spaced Fourier modes. Then for any $\omega \in [K]_0$
\begin{align*}
\b{F}_d \bigg((\b{Y}_{K,d})^T \b{F}_{K}^T\bigg)_\omega = K\sum_{r = 0}^{\frac{d}{K} - 1} \b{F}_d (\b{x} \circ S_{\ell L - \alpha}\bar{\b{x}}) \circ \b{F}_d (\tilde{\b{m}} \circ S_{\alpha - \ell L} \bar{\tilde{\b{m}}}) + \b{F}_d \bigg((\b{N}_{K,d})^T \b{F}_{K}^T\bigg)_\omega
\end{align*}
where $\b{Y}_{K,d} \in \mathbb{C}^{K \times d} $ is the matrix of sub-sampled noiseless $K \cdot d$ measurements.
\end{Lemma}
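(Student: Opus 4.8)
The plan is to deduce this subsampled identity from the full‑sampling result already in hand (the Discretized Wigner Distribution Deconvolution theorem, built on Theorem \ref{thm: BlindPtych Measurements}) by inserting the standard DFT aliasing relation. Throughout I would write $L := d/K$ for the spacing of the retained modes and fix a shift coordinate $\ell \in [d]_0$, so that each column of the matrices involved may be handled as an ordinary vector in the frequency variable. The input I would record first is that, by the computation in Theorem \ref{thm: BlindPtych Measurements} together with Technical Equalities (i), for every $a \in [d]_0$ one has
\[
(\b{F}_d \b{y}_\ell)_a = d\cdot\big((\b{x}\circ S_a\bar{\b{x}})*_d(\tilde{\b{m}}\circ S_{-a}\bar{\tilde{\b{m}}})\big)_\ell + (\b{F}_d \b{N}_\ell)_a .
\]

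The key step is the aliasing identity. Writing the retained modes as $\mathcal{K}=\{jL : j\in[K]_0\}$, the relevant entry of the $K$‑point transform of the subsampled column is $(\b{F}_K\b{Y}_{K,d})_{\omega,\ell} = \sum_{j=0}^{K-1}(\b{y}_\ell)_{jL}\,e^{-2\pi i\omega j/d}$, where the sampled values carry the $K$‑point exponential. I would substitute the inverse $d$‑point DFT $(\b{y}_\ell)_{jL}=\tfrac1d\sum_{a=0}^{d-1}(\b{F}_d\b{y}_\ell)_a\,e^{2\pi i a jL/d}$, use $L/d=1/K$, and interchange the two finite sums. The inner sum $\sum_{j=0}^{K-1}e^{2\pi i j(a-\omega)/K}$ is a geometric series equal to $K$ when $a\equiv\omega\pmod K$ and $0$ otherwise, which collapses the $a$‑sum onto the coset $a=\omega+rK$, $r=0,\dots,\tfrac{d}{K}-1$, giving
\[
(\b{F}_K\b{Y}_{K,d})_{\omega,\ell} = \frac{K}{d}\sum_{r=0}^{d/K-1}(\b{F}_d\b{y}_\ell)_{\omega+rK}.
\]
Substituting the full‑sampling expression for each aliased term, the prefactor $1/d$ cancels the $d$ there, leaving the overall constant $K$ displayed in the statement.

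Finally I would transpose back to recover the row $((\b{Y}_{K,d})^T\b{F}_K^T)_\omega$ as a vector in $\ell$, apply the outer $\b{F}_d$ in the shift variable, and invoke the Discretized Convolution Theorem to turn each circular convolution $(\b{x}\circ S_{\omega+rK}\bar{\b{x}})*_d(\tilde{\b{m}}\circ S_{-(\omega+rK)}\bar{\tilde{\b{m}}})$ into the Hadamard product $\b{F}_d(\b{x}\circ S_{\omega+rK}\bar{\b{x}})\circ\b{F}_d(\tilde{\b{m}}\circ S_{-(\omega+rK)}\bar{\tilde{\b{m}}})$; the noise term is carried along verbatim by linearity. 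The main obstacle I anticipate is purely bookkeeping: reconciling the aliasing indices with the offset $\alpha$ of the retained mode set and the shift labels $S_{\ell L-\alpha}$ in the statement, since a nonzero offset in $\mathcal{K}$ produces a modulation phase $e^{2\pi i(\omega+rK)\alpha/d}$ in the collapsed sum that must be tracked and shown to be absorbed into the definition of the modes (as set up in Appendix \ref{BP Appendix}), all while keeping the row/column transpose conventions straight across $\b{Y}_{K,d}$, $\b{F}_K\b{Y}_{K,d}$ and their transposes. Once the aliasing identity is established, the remainder is a direct substitution into results already proved.
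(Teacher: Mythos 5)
Your proof is correct and follows essentially the same route as the paper: the paper obtains this lemma from its appendix result (subsample each column $\b{y}_\ell$, apply the Aliasing Lemma with $s = d/K$, substitute the Wigner distribution deconvolution identity for $(\b{F}_d \b{y}_\ell)_{\omega - rK}$, transpose, then apply the outer $\b{F}_d$ together with the Discretized Convolution Theorem to pass from convolutions to Hadamard products), and your argument does exactly this, merely proving the aliasing step inline via the geometric-series/coset computation instead of quoting it. The only point to settle is that no offset bookkeeping is actually needed: the paper's mode set is $\mathcal{K} = \frac{d}{K}[K]_0$ with zero offset, and the shift labels $S_{\ell L - \alpha}$, $S_{\alpha - \ell L}$ appearing in the statement are a typo carried over from the subsequent frequency-and-space lemma, the correct indices being the $S_{\omega - rK}$, $S_{rK - \omega}$ you derive (your $\omega + rK$ agrees modulo $d$); also, your $K$-point DFT exponential should read $e^{-2\pi i \omega j / K}$ rather than $e^{-2\pi i \omega j / d}$, a harmless slip since the subsequent computation uses the correct form.
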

\begin{Lemma} \textbf{(Sub-Sampling In Frequency And Space)} Suppose we have spectrogram measurements collected on a subset $\mathcal{K} \subseteq [d]_0$ of $K$ equally spaced frequencies and a subset $\mathcal{L} \subseteq [d]_0$ of $L$ equally spaced physical shifts. Then for any $\omega \in [K]_0, \alpha \in [L]_0$
\begin{align*}
\bigg(\b{F}_L (\b{Y}_{K,L})^T (\b{F}_{K}^T)_\omega\bigg)_\alpha &= \dfrac{KL}{d^3} \sum_{r = 0}^{\frac{d}{K} - 1} \sum_{\ell = 0}^{\frac{d}{L} - 1} \bigg( \b{F}_d (\hat{\b{x}} \circ S_{\ell L - \alpha}\bar{\hat{\b{x}}})\bigg)_{\omega - rK} \bigg(F_d (\hat{\b{m}} \circ S_{\alpha - \ell L} \bar{\hat{\b{m}}})\bigg)_{\omega - rK}&\\
&+ \bigg(\b{F}_L (\b{N}_{K,L})^T (\b{F}_{K}^T)_\omega\bigg)_\alpha&,
\end{align*}
where $\b{Y}_{K,L} \in \mathbb{C}^{K \times L}$ is the matrix of sub-sampled noiseless $K \cdot L$ measurements.
\end{Lemma}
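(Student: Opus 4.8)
The plan is to realise the doubly sub-sampled quantity as a two-fold aliasing of the fully sampled Wigner distribution deconvolution identity, applying the sample-to-DFT correspondence once along the frequency axis and once along the shift axis. First I would record the fully sampled identity in the pointwise form already derived, namely that for every shift index $k$ and frequency index $j$ one has $\big(\b{F}_d(\b{Y}^T\b{F}_d^T)_k\big)_j = d\,\big[\b{F}_d(\b{x}\circ S_k\bar{\b{x}})\big]_j\,\big[\b{F}_d(\tilde{\b{m}}\circ S_{-k}\bar{\tilde{\b{m}}})\big]_j + \big[\text{noise}\big]_{j}$, so that the object factorises into a signal Wigner-type term and a mask Wigner-type term. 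The sub-sampled statement will then be obtained by restricting to the coarse grids and replacing the two full transforms $\b{F}_d$ by the smaller $\b{F}_K$ and $\b{F}_L$.

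The conceptual crux is a change of variables that moves the signal and mask into the Fourier domain, turning $\b{x},\tilde{\b{m}}$ into $\hat{\b{x}},\hat{\b{m}}$ and, simultaneously, interchanging the roles of the shift and frequency indices. This is exactly the time--frequency symmetry of the ambiguity function, and it is encoded in the Technical Equalities together with the Discretized Convolution Theorem: using $\b{F}_d(\b{a}\circ\b{b}) = \tfrac1d(\b{F}_d\b{a})*_d(\b{F}_d\b{b})$ and the identities $\b{F}_d\hat{\b{x}} = d\,\tilde{\b{x}}$, $\b{F}_d(S_\ell\b{x}) = W_\ell\hat{\b{x}}$, one rewrites $\b{F}_d(\b{x}\circ S_k\bar{\b{x}})$ evaluated at frequency $j$ as $\b{F}_d(\hat{\b{x}}\circ S_{j}\bar{\hat{\b{x}}})$ evaluated at $k$ up to a power of $d$, and likewise for the mask. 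This step is what produces the hats on the right-hand side and explains why the shift offset $\ell L-\alpha$ sits inside the $S$ operator while the evaluation index is the frequency offset $\omega-rK$.

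The remaining ingredient is the elementary aliasing lemma, which I would prove once and invoke twice. If $\b{z}\in\C^d$ is sampled at the $K$ points spaced $d/K$ apart to form $\b{z}_K\in\C^K$, then inserting the inverse $d$-point DFT and collapsing the resulting geometric sum on the residue class $\omega \bmod K$ gives $(\b{F}_K\b{z}_K)_\omega = \tfrac{K}{d}\sum_{r=0}^{d/K-1}(\b{F}_d\b{z})_{\omega-rK}$; since the DFT matrix is symmetric, $\b{F}_K^T=\b{F}_K$ and the same formula applies when the transform is written as a transpose. Applying this along the sub-sampled frequency axis replaces the single index $j$ by the aliased sum over $r$ with evaluations at $\omega-rK$ and a factor $K/d$, while applying it along the sub-sampled shift axis replaces $k$ by the aliased sum over $\ell$ with $S_k\mapsto S_{\ell L-\alpha}$, $S_{-k}\mapsto S_{\alpha-\ell L}$ and a factor $L/d$. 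The double sum $\sum_{r}\sum_{\ell}$ in the statement is then exactly the product of the two independent aliasing sums, and because every operator in sight is linear the additive noise is transported through the identical steps to give the trailing term $\big(\b{F}_L(\b{N}_{K,L})^T(\b{F}_K^T)_\omega\big)_\alpha$.

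The hard part will be the simultaneous index and constant bookkeeping. I must keep the two transposes, the two grid spacings, and the two aliasing offsets mutually consistent so that the shifts land as $\ell L-\alpha$ and $\alpha-\ell L$ while the frequency evaluations land at $\omega-rK$, and I must combine the normalisations correctly: the factor $d$ from the fully sampled identity, the factors $K/d$ and $L/d$ from the two aliasing steps, and the powers of $d$ generated by the change of variables must multiply to exactly $\tfrac{KL}{d^3}$ rather than an off-by-$d$ variant. As a guard against sign and indexing slips I would first re-derive the preceding Sub-Sampling In Frequency lemma as the single-direction specialisation, checking that it matches once the change of variables to the hatted coordinates is undone, and only then assemble the two directions together.
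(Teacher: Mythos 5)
Your proposal is correct and is essentially the paper's own proof: the paper likewise obtains the result by applying the aliasing lemma twice (first along frequency, via the intermediate Sub-Sampling In Frequency lemma, then along shifts) to the full Wigner distribution deconvolution identity, and then passes to the hatted form via the Discretized Convolution Theorem and the Fourier-transform-of-autocorrelation lemma; your only deviation is to perform that hat conversion before the double aliasing rather than after, which is an immaterial reordering. One caution for your bookkeeping step: the per-factor conversion is not merely ``up to a power of $d$'' but reads $\big(\mathbf{F}_d(\mathbf{x}\circ S_k\bar{\mathbf{x}})\big)_j = \tfrac{1}{d}\,e^{2\pi i kj/d}\big(\mathbf{F}_d(\hat{\mathbf{x}}\circ S_{-j}\bar{\hat{\mathbf{x}}})\big)_k$ (note the phase and the sign $S_{-j}$, and the extra reversal identities needed to turn $\hat{\tilde{\mathbf{m}}}$ into $\hat{\mathbf{m}}$), so the phase-free formula in the statement emerges only because the signal and mask phases are conjugate and cancel in the product.
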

Assume that $\b{m}$ is band-limited with $supp(\hat{\b{m}}) = [\delta]_0$ for some $\delta \ll d$. Then the algorithm below allows for the recovery of an estimate of $\hat{\b{x}}$ from spectrogram measurements via Wigner distribution deconvolution and angular synchronization.

\begin{algorithm}[H]
\caption{(Algorithm 1, \cite{merhi2019phase}) Wigner Distribution Deconvolution Algorithm} \label{alg: WDD Ptych}
\begin{algorithmic}
\Require 1) $Y_{d,L} \in \mathbb{C}^{d \times L}$, matrix of noisy measurements.\\
2) Mask $\b{m} \in \mathbb{C}^{d}$ with $supp(\hat{\b{m}}) = [\delta]$.\\
3) Integer $\kappa \leq \delta$, so that $2\kappa - 1$ diagonals of $\hat{\b{x}}\hat{\b{x}}^*$ are estimated, and $L = \delta + \kappa - 1$.\\
\Ensure An estimate $\mathbf{x}_{est}$ of $\b{x}$ up to a global phase.
\State 1) Perform pointwise division to compute
\begin{align}
\dfrac{1}{d}\dfrac{\b{F}_d \bigg( \b{Y}^T \b{F}_{d}^T\bigg)_k}{\b{F}_d^{-1} (\tilde{\b{m}} \circ S_{-k} \bar{\tilde{\b{m}}})}.
\end{align}
\State 2) Invert the $(2\kappa - 1)$ Fourier transforms above.
\State 3) Organize values from step 2 to form the diagonals of a banded matrix $Y_{2\kappa - 1}$.
\State 4) Perform angular synchronization on $Y_{2\kappa - 1}$ to obtain $\hat{\mathbf{x}}_{est}$.
\State 5) Let $\mathbf{x}_{est} = \b{F}_{d}^{-1}\hat{\mathbf{x}}_{est}$.
\end{algorithmic}
\end{algorithm}
When the mask is known with $supp(\hat{\b{m}}) = [\delta]_0$, $\delta \ll d$, maximum error guarantees (Theorem 2.1.1., \cite{merhi2019phase}) are given depending on $\b{x}$, $d$, $\kappa$, $L$, $\Vert N_{d,L}\Vert_{F}$ (the matrix formed by the noise) and the mask dependent constant $\mu > 0$,
\begin{align}
\mu := \underset{|p| \leq \kappa - 1, q \in [d]_0}{\min} \bigg| (F_d (\hat{\b{m}} \circ S_p \bar{\hat{\b{m}}}))_{q}\bigg|.
\end{align}
In the next section, we look at the situation in which both the specimen and mask are unknown. Since we have already shown that we can rewrite the Fourier squared magnitude measurements as convolutions between the shifted autocorrelations, then the next obvious step is when both the specimen and mask are unknown. This is the topic of \textbf{blind deconvolution}, which seeks to recover vectors from their deconvolution. In particular, we will look at a couple of approaches which involve making assumptions based on real world applications.
\section{Blind Deconvolution} \label{sec: Blind Deconvolution}
\subsection{Introduction}

Blind Deconvolution is a problem that has been mathematically considered for decades, from more past work (\cite{ayers1988iterative}, \cite{katsaggelos1991maximum}, \cite{thiebaut1995strict},
\cite{fish1995blind}\cite{molina1997bayesian}, \cite{kundur1998novel})
to more recent work (\cite{carasso2001direct}, \cite{likas2004variational}, \cite{ahmed2013blind}, \cite{fortunato2014fast} \cite{li2019rapid}), and summarized in \cite{levin2011understanding}. The goal is to recover a sharp image from an initial blurry image. The first application to compressive sensing was considered in
\cite{ahmed2013blind}. \\
\indent We consider one-dimensional, discrete, noisy measurements of the form $\b{y} = \b{f} * \b{g} + \b{n}$, where the $\b{f}$ is considered to be an object, signal, or image of consideration. $\b{g}$ is considered to be a blurring, masking, or point-spread function. $\b{n}$ is considered to be the noise vector. $*$ refers to circulant convolution\footnote{$*$ should refer to ordinary convolution, but for $\b{g}$ that will be considered in this chapter, circulant convolution will be sufficient.}. We consider situations when both $\mathbf{f}$ and $\mathbf{g}$ are unknown. The process of recovering the object and blurring function can be generalized to two-dimensional measurements.\\
\indent The problem of estimating the unknown blurring function and unknown object simultaneously is known as \textbf{blind image restoration} (\cite{you1999blind}, \cite{wu2022blind}, \cite{kundur1996blind}, \cite{sroubek2003multichannel}). Although strictly speaking, \textbf{blind deconvolution} refers to the noiseless model of recovering $\mathbf{f}$ and $\mathbf{g}$ from $\mathbf{y} = \mathbf{f} * \mathbf{g}$, the noisy model is commonly refered to as blind deconvolution itself, and this notation will be continued in this chapter.\\
\indent As we will show later, the problem is ill-posed and ambiguities lead to no unique solution to the pair being viable from any approach.\\
\indent In Section \ref{sec: Blind Deconvolution Model}, we consider the underlying measuerements and assumptions that we will consider. We then show how through manipulation, that we can re-write the original problem as a minimization of a non-convex linear function. In Section \ref{sec: Wirtinger Gradient Descent}, we demonstrate an iterative approach to the minimization problem, in particular, by applying Wirtinger Gradient Descent. In Section \ref{sec: Blind Deconvolution Algorithms}, we outline the initial estimate used for this gradient descent and fully layout the algorithm that will apply in our numerical simulations. In Section \ref{sec: Blind Deconvolution Main Theorems}, we will look at the recovery guarantees that currently exist for this approach. Finally, in Section \ref{sec: Blind Deconvolution Key Conditions}, we consider the key conditions used to generate the main recovery theorem, and where further work could be done to generalize these conditions and ultimately, allow more guarantees of recovery.
\subsection{Blind Deconvolution Model} \label{sec: Blind Deconvolution Model}
We now want to approach the blind ptychography problem in which both the mask and specimen are unknown. Using the lemmas in the previous section, we can see that this would reduce to solving a blind deconvolution problem.
\begin{Definition} We consider the blind deconvolution model
\begin{align*}
\b{y}' = \b{f} * \b{g} + \b{n}, \quad \b{y}, \b{f}, \b{g}, \b{n} \in \mathbb{C}^d,
\end{align*}
where $\b{y}$ are blind deconvolutional measurements, $\b{f}$ is the unknown blurring function (which serves a similar role as to our phase retrieval masks), $\b{n}$ is the noise, and $\b{g}$ is the signal (which serves a similar role as to our phase retrieval object). Here $*$ denotes circular convolution. 
\end{Definition}
 We will base our work on the algorithm suggested in \cite{li2019rapid}, considering the assumptions used. In \cite{li2019rapid}, the authors impose general conditions on $\b{f}$ and $\b{g}$ that are not restricted to any particular application but allows for flexibility.  They also assume that $\b{f}$ and $\b{g}$ belong to known linear subspaces.\\
\indent For the blurring function, it is assumed that $\b{f}$ is either compactly supported, or that $\b{f}$ decays sufficiently fast so that it can be well approximated by a compactly supported function. Therefore, we make the assumption that $\b{f} \in \mathbb{C}^d$ satisfies
\begin{align*}
\b{f} := 
\begin{bmatrix}
\b{h}\\
\b{0}_{d-K}\\
\end{bmatrix},
\end{align*} 
for some $k \ll d, \b{h} \in \mathbb{C}^K$. This again reinforces the notion that the blurring function is analogous to our masking function since both are compactly supported. 
\begin{figure}[H]
\centering
\includegraphics[width=1\textwidth]{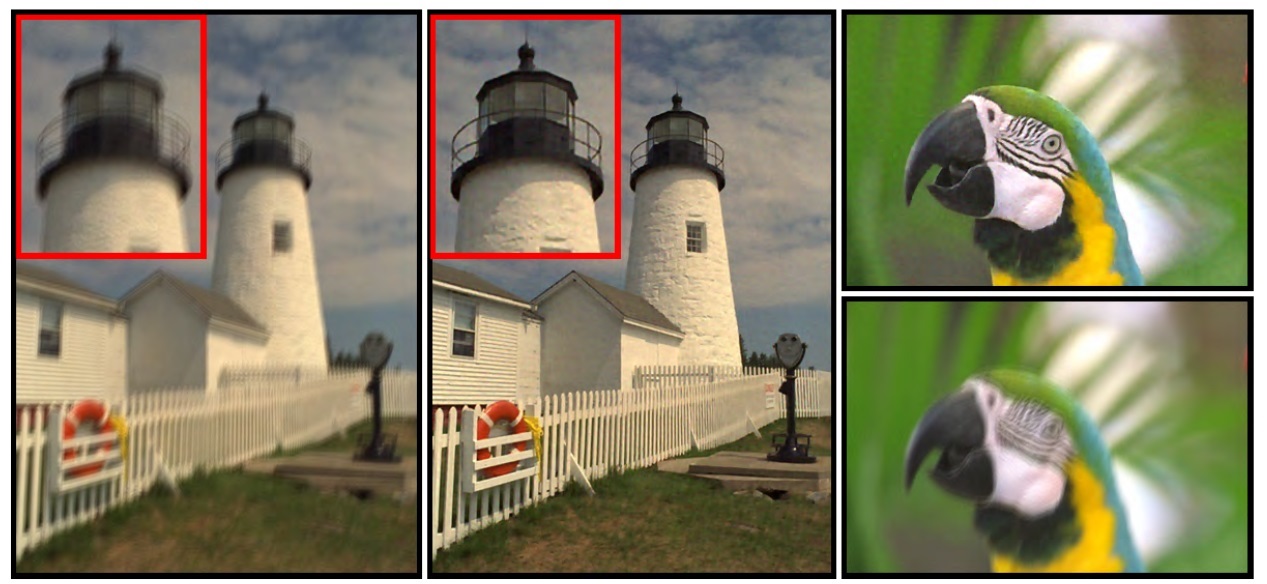}
\caption{\cite{fortunato2014fast} An example of an image deblurring by solving the deconvolution} 
\end{figure}
For the signal, it is assumed that $\b{g}$ belongs to a linear subspace spanned by the columns of a known matrix $\b{C}$, i.e., $\b{g} = \b{C}\bar{\b{x}}$ for some matrix $\b{C} \in \mathbb{C}^{d \times N}, N \ll d$. This will lead to an additional restriction we have to place on our blind ptychography but one for which there are real world applications for which this assumption makes reasonable sense.\\
\indent In \cite{li2019rapid}, the authors use that $\b{C}$ is a Gaussian random matrix for theoretical guarantees although they demonstrated in numerical simulations that this assumption is not necessary to gain results. In particular, they found good results for when $\b{C}$ represents a wavelet subspace (suitable for images) or when $\b{C}$ is a Hadamard-type matrix (suitable for communications).\\
\indent We assume the noise is complex Gaussian, i.e. $\b{n} \sim \mathcal{N}(0, \tfrac{\sigma^2 L_{0}^2}{2}I_d) + i\mathcal{N}(0, \tfrac{\sigma^2 L_{0}^2}{2}I_d)$ is a complex Gaussian noise vector, where $L_0 = \Vert\b{h}_0\Vert \cdot \Vert\b{x}_0\Vert$, and $\b{h}_0, \b{x}_0$ are the true blurring function and signal. $\sigma^{-2}$ represents the SNR.\\
\indent The goal is convert the problem into one which can be algorithmically solvable via gradient descent.
\begin{Proposition} \cite{li2019rapid} \label{prop: newblindmodel}
Let $\b{F}_{d} \in C^{d \times d}$ be DFT matrix. Let $\b{B} \in \mathbb{C}^{d \times K}$ denote the first $K$ columns of $\b{F}_{d}$. Then we have that
\begin{align}
\b{y} = \b{Bh} \circ \overline{\b{Ax}} + \b{e},
\end{align}
where $\b{y} = \dfrac{1}{\sqrt{d}}\widehat{\b{y}'}$, $\bar{\b{A}} = \b{FC} \in \mathbb{C}^{d \times N}$, and $\b{e} = \dfrac{1}{\sqrt{d}}\b{F_dn}$ represents noise.
\end{Proposition}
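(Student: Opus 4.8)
The plan is to derive the stated identity by applying the (normalized) discrete Fourier transform to the time-domain model $\b{y}' = \b{f} * \b{g} + \b{n}$ and then converting the circular convolution into a Hadamard product. Since the map $\b{v} \mapsto \tfrac{1}{\sqrt{d}}\b{F}_d\b{v}$ is linear, I would treat the signal term $\b{f}*\b{g}$ and the noise term $\b{n}$ separately. The noise contribution is then immediately $\b{e} = \tfrac{1}{\sqrt{d}}\b{F}_d\b{n}$ by definition, so the work reduces to analysing $\tfrac{1}{\sqrt{d}}\b{F}_d(\b{f}*_d\b{g})$ and recognising it as $\b{Bh}\circ\overline{\b{Ax}}$.

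For the signal term I would invoke the Discretized Convolution Theorem: part (i), namely $F_d^{-1}(\hat{\b{x}}\circ\hat{\b{y}}) = \b{x}*_d\b{y}$, rearranges to $\b{F}_d(\b{f}*_d\b{g}) = (\b{F}_d\b{f})\circ(\b{F}_d\b{g})$, so the convolution is replaced by a pointwise product of the two spectra. The first factor is then identified using the compact support of $\b{f}$: since the first $K$ entries of $\b{f}$ equal $\b{h}$ and the remaining $d-K$ entries vanish, the sum $(\b{F}_d\b{f})_j = \sum_{n=0}^{d-1} f_n e^{-2\pi ijn/d}$ truncates to $\sum_{n=0}^{K-1} h_n e^{-2\pi ijn/d}$, which is exactly the $j$-th entry of $\b{Bh}$ when $\b{B}$ consists of the first $K$ columns of $\b{F}_d$. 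Hence $\b{F}_d\b{f} = \b{Bh}$.

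For the second factor I would substitute $\b{g} = \b{C}\bar{\b{x}}$ and use the definition $\bar{\b{A}} = \b{F}_d\b{C}$, giving $\b{F}_d\b{g} = (\b{F}_d\b{C})\bar{\b{x}} = \bar{\b{A}}\bar{\b{x}}$; since entrywise conjugation commutes with matrix multiplication, $\bar{\b{A}}\bar{\b{x}} = \overline{\b{A}\b{x}}$. Combining the two factors yields $\b{F}_d(\b{f}*_d\b{g}) = (\b{Bh})\circ\overline{\b{A}\b{x}}$, and adding back the transformed noise gives the claimed form of $\b{y}$.

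The step requiring the most care is the bookkeeping of the DFT normalization. In the paper's convention $\b{F}_d$ is unnormalized, as witnessed by the technical equality $\b{F}_d\hat{\b{x}} = d\cdot\tilde{\b{x}}$; consequently the convolution identity above carries no extra constant, but the prefactor $\tfrac{1}{\sqrt{d}}$ coming from $\b{y} = \tfrac{1}{\sqrt{d}}\widehat{\b{y}'}$ must be tracked and absorbed into the definitions of $\b{B}$ and $\b{A}$ (equivalently, one takes $\b{B}$ to be the first $K$ columns of the unitary DFT $\tfrac{1}{\sqrt{d}}\b{F}_d$). Once this constant is reconciled, the three ingredients assemble directly into $\b{y} = \b{Bh}\circ\overline{\b{Ax}} + \b{e}$.
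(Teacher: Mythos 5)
Your proposal is correct and follows essentially the same route as the paper's own proof: apply the DFT to $\b{y}' = \b{f} * \b{g} + \b{n}$, use the convolution theorem to turn the convolution into a Hadamard product, identify $\b{F}_d\b{f} = \b{Bh}$ from the compact support of $\b{f}$, and identify $\b{F}_d\b{g} = \overline{\b{Ax}}$ from the subspace model $\b{g} = \b{C}\bar{\b{x}}$. If anything, your explicit tracking of the $\tfrac{1}{\sqrt{d}}$ normalization is more careful than the paper's, which silently switches to the unitary DFT convention (as betrayed by its use of $\b{B}^*\b{B} = \b{I}_K$) despite the unnormalized convention used elsewhere; the only ingredient you omit is the paper's side remark that $\bar{\b{A}} = \b{FC}$ remains Gaussian, which is not needed for the algebraic identity itself.
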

\begin{proof}
By the unitary property of $\b{F}_d$, we have that $\b{B}^*\b{B} = \b{I}_K$.By applying the scaled DFT matrix $\sqrt{L}\b{F}_{d}$ to both sides of the convolution, we have that 
\begin{align*}
\sqrt{L}\b{F_dy} = (\sqrt{L}\b{F_df}) \circ (\sqrt{L}\b{F_dg}) + \sqrt{L}\b{F_dn}.
\end{align*}
Additionally, we have that
\begin{align*}
\b{F_df} =
\begin{bmatrix}
\b{B} & \b{M}\\
\end{bmatrix}
\begin{bmatrix}
\b{h}\\
\b{0}_{d-K}\\
\end{bmatrix}
= \b{Bh},
\end{align*}
and we let $\bar{\b{A}} = \b{FC} \in \mathbb{C}^{L \times N}$. Since $\b{C}$ is Gaussian, then $\bar{\b{A}} = \b{FC}$ is also Gaussian. In particular,
\begin{align*}
\bar{\b{A}}_{ij} \sim \mathcal{N}(0, \tfrac{1}{2}) + i\mathcal{N}(0, \tfrac{1}{2}).
\end{align*}
Thus by dividing by $d$, the problem converts to 
\begin{align*}
\dfrac{1}{\sqrt{d}}\widehat{\b{y}'} = \b{Bh} \circ\overline{\b{Ax}} + \b{e},
\end{align*}
where $\b{e} = \dfrac{1}{\sqrt{d}}\b{F_Ln} \sim \mathcal{N}(0, \tfrac{\sigma^2 L_{0}^2}{2L}\b{I}_d) + i\mathcal{N}(0, \tfrac{\sigma^2 L_{0}^2}{2L}\b{I}_d)$ serves as complex Gaussian noise. Hence by letting $\b{y} = \dfrac{1}{\sqrt{d}}\widehat{\b{y}'}$, we arrive at
\begin{align*}
\b{y} = \b{Bh} \circ \overline{\b{Ax}} + \b{e}.
\end{align*}
\end{proof}
We have thus transformed the original blind deconvolution model as a hadamard product. This form of the problem is used in the rest of the section, where $\b{y} \in \mathbb{C}^{d}, \b{B} \in \mathbb{C}^{d \times K}, \b{A} \in \mathbb{C}^{d \times N}$ are given. Our goal is to recover $\b{h}_0$ and $\b{x}_0$. \\
\indent There are inherent ambiguities to the problem however. If $(\b{h}_0, \b{x}_0)$ is a solution to the blind deconvolution problem, then so is $(\alpha \b{h}_0, \alpha^{-1} \b{x}_0)$ for any non-zero constant $\alpha$. For most real world applications, this is not an issue. Thus for uniformity, it is assumed that $\Vert\b{h}_0\Vert = \Vert\b{x}_0\Vert = \sqrt{L_0}$.
\begin{Definition}
We define the matrix-valued linear operator $\mathcal{A} : \mathbb{C}^{K \times N} \longrightarrow \mathbb{C}^d$ by
\begin{align*}
\c{A}(Z) := \{\b{b}_{\ell}^{*}Z \b{a}_\ell\}_{\ell=1}^{d},
\end{align*}
where $\b{b}_k$ denotes the $k$-th column of $\b{B}^*$, and $\b{a}_k$ is the $k$-th column of $\b{A}^*$. We also define the corresponding adjoint operator $\c{A}^* : \mathbb{C}^d \longrightarrow \mathbb{C}^{K \times N}$, given by
\begin{align*}
\mathcal{A}^* (\b{z}) := \sum_{k=1}^{d}\b{z}_k  \b{b}_k \b{a}_{k}^*.
\end{align*}
\end{Definition}
We see that this translates to a lifting problem, where
\begin{align*}
\sum_{k=1}^{d} \b{b}_{\ell}\b{b}_{\ell}^* = \b{B}^* \b{B} = \b{I}_K, \quad \Vert\b{b}_\ell\Vert = \dfrac{K}{d}, \quad \mathbb{E}(\b{a}_{\ell}\b{a}_{\ell}^*) = \b{I}_N, \quad \forall k \in [d].
\end{align*}
\begin{Lemma} \label{lem: newblindmodel}
Let $y$ be defined as in Proposition \ref{prop: newblindmodel}. Then
\begin{align}
\b{y} = \mathcal{A}(\b{h}_0 \b{x}_{0}^*) + \b{e}.
\end{align}
\end{Lemma}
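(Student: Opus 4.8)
The plan is to verify the claimed identity entrywise: I will show that for each $\ell \in [d]$ the $\ell$-th coordinate of $\c{A}(\b{h}_0 \b{x}_0^*)$ equals the $\ell$-th coordinate of $\b{Bh}_0 \circ \overline{\b{Ax}_0}$. Once this is established, the lemma follows at once, since Proposition \ref{prop: newblindmodel} already gives $\b{y} = \b{Bh}_0 \circ \overline{\b{Ax}_0} + \b{e}$, so we may simply add the noise term $\b{e}$ to both sides.

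First I would unpack the definition of the lifted operator. By definition $\c{A}(\b{h}_0 \b{x}_0^*)$ has $\ell$-th coordinate equal to the scalar $\b{b}_\ell^{*}(\b{h}_0 \b{x}_0^*)\b{a}_\ell$. Because the inner object $\b{h}_0 \b{x}_0^*$ is rank one, this scalar factors as $\b{b}_\ell^{*}(\b{h}_0 \b{x}_0^*)\b{a}_\ell = (\b{b}_\ell^{*}\b{h}_0)(\b{x}_0^{*}\b{a}_\ell)$. This rank-one factorization is the conceptual core of the statement: it is precisely the lifting step that rewrites the product of two linear measurements of $\b{h}_0$ and $\b{x}_0$ as a single linear measurement of the matrix $\b{h}_0 \b{x}_0^*$.

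Next I would identify each of the two factors. Since $\b{b}_\ell$ is the $\ell$-th column of $\b{B}^*$, it is the entrywise conjugate of the $\ell$-th row of $\b{B}$, whence $\b{b}_\ell^{*}\b{h}_0$ is exactly the $\ell$-th row of $\b{B}$ applied to $\b{h}_0$, i.e. $(\b{Bh}_0)_\ell$. Likewise $\b{a}_\ell$ is the conjugate of the $\ell$-th row of $\b{A}$, so pairing it on the left with $\b{x}_0^{*} = \overline{\b{x}_0}^{T}$ gives $\b{x}_0^{*}\b{a}_\ell = \overline{(\b{Ax}_0)_\ell}$. The single point demanding care is the conjugation bookkeeping: the adjoint acting on $\b{b}_\ell$ must recover an \emph{un}-conjugated factor $(\b{Bh}_0)_\ell$, whereas the pairing of $\b{x}_0^{*}$ against $\b{a}_\ell$ must instead produce the \emph{conjugated} factor $\overline{(\b{Ax}_0)_\ell}$, so as to match the conjugate appearing on the second factor of the Hadamard product in Proposition \ref{prop: newblindmodel}.

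Combining the two factors yields $\big(\c{A}(\b{h}_0 \b{x}_0^*)\big)_\ell = (\b{Bh}_0)_\ell\,\overline{(\b{Ax}_0)_\ell} = \big(\b{Bh}_0 \circ \overline{\b{Ax}_0}\big)_\ell$ for every $\ell$, and therefore $\c{A}(\b{h}_0 \b{x}_0^*) = \b{Bh}_0 \circ \overline{\b{Ax}_0}$ as vectors; adding $\b{e}$ and invoking Proposition \ref{prop: newblindmodel} gives $\b{y} = \c{A}(\b{h}_0 \b{x}_0^*) + \b{e}$. I do not anticipate a genuine obstacle here, as the argument is a direct componentwise verification; the only place one can slip is in tracking the conjugations so that the Hadamard structure of Proposition \ref{prop: newblindmodel} is reproduced exactly.
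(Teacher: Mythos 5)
Your proof is correct: the componentwise identity $\b{b}_\ell^{*}(\b{h}_0\b{x}_0^{*})\b{a}_\ell = (\b{Bh}_0)_\ell\,\overline{(\b{Ax}_0)_\ell}$, with the conjugation bookkeeping handled exactly as you do, is precisely why $\c{A}(\b{h}_0\b{x}_0^{*}) = \b{Bh}_0 \circ \overline{\b{Ax}_0}$. The paper states this lemma without proof, treating it as immediate from the definition of $\c{A}$ together with Proposition \ref{prop: newblindmodel}, and your verification is exactly the argument implicitly being invoked there, so there is nothing to add.
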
 
This equivalent model to Proposition \ref{prop: newblindmodel} will the model worked with for the rest of the chapter.\\
\indent We aim to recover $(\b{h}_0, \b{x}_0)$ by solving the minimization problem\begin{align*} \label{eqn: blindminproblem}
\min_{(\b{h},\b{x})} F(\b{h},\b{x}), \quad F(\b{h},\b{x}) := \Vert\mathcal{A}(\b{h}\b{x}^*) - \b{y}\Vert^2 = \Vert\mathcal{A}(\b{h}\b{x}^* - \b{h}_0 \b{x}_{0}^*) - \b{e}\Vert^2.
\end{align*}
We also define
\begin{align*}
F_0(\b{h},\b{x}) := \Vert\mathcal{A}(\b{h}\b{x}^* - \b{h}_0 \b{x}_{0}^*)\Vert^2, \quad \delta = \delta(\b{h},\b{x}) = \dfrac{\Vert\b{h}\b{x}^* - \b{h}_{0}\b{x}_{0}^*\Vert_{F}}{d_0}.
\end{align*}
$F(\b{h},\b{x})$ is highly non-convex and thus attempts at minimization such as alternating minimization and gradient descent, can get easily trapped in some local minima.
\subsubsection{Main Theorems}
\begin{Theorem} \textbf{(Existence Of Unique Solution)} (\cite{ahmed2013blind}, Theorem 1) Fix $\alpha \geq 1$. Then there exists a constant $C_\alpha = O(\alpha)$, such that if
\begin{align*}
\max (K \cdot \mu_{\max}^{2}, N \cdot \mu_{h}^{2}) \leq \dfrac{d}{C_\alpha \log^3 d},
\end{align*}
then $\b{X}_0 = \b{h}_0 \b{x}_{0}^*$ is the unique solution to our minimization problem with probability $1 - O(d^{-\alpha + 1})$, thus we can separate $\b{y} = \b{f} * \b{g}$ up to a scalar multiple. When coherence is low, this is tight within a logarithmic factor, as we always have $\max (K,N) \leq d$.
\end{Theorem}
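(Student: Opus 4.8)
The plan is to prove this by lifting the bilinear problem to a low-rank matrix recovery problem and certifying optimality with a dual certificate, following the convex-programming approach of Ahmed, Recht and Romberg from which the statement is quoted. By Lemma \ref{lem: newblindmodel} the noiseless measurements satisfy $\mathbf{y} = \mathcal{A}(\mathbf{X}_0)$ with $\mathbf{X}_0 = \mathbf{h}_0 \mathbf{x}_0^*$ a rank-one matrix, so recovering $(\mathbf{h}_0,\mathbf{x}_0)$ up to the unavoidable scalar ambiguity is equivalent to recovering $\mathbf{X}_0$. The theorem is read as a statement about the convex lifting $\min \Vert \mathbf{X}\Vert_* \text{ subject to } \mathcal{A}(\mathbf{X}) = \mathbf{y}$, and I would show $\mathbf{X}_0$ is its unique optimizer. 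Writing $\mathbf{u} = \mathbf{h}_0/\Vert\mathbf{h}_0\Vert$, $\mathbf{v} = \mathbf{x}_0/\Vert\mathbf{x}_0\Vert$, and letting $T = \{\mathbf{u}\mathbf{w}^* + \mathbf{z}\mathbf{v}^* : \mathbf{z}\in\mathbb{C}^K,\ \mathbf{w}\in\mathbb{C}^N\}$ be the tangent space to the rank-one variety at $\mathbf{X}_0$, standard convex analysis reduces uniqueness to two deterministic conditions: (i) $\mathcal{A}$ is injective on $T$, and (ii) there exists a dual certificate $\mathbf{Y}\in\mathrm{range}(\mathcal{A}^*)$ with $\mathcal{P}_T\mathbf{Y} = \mathbf{u}\mathbf{v}^*$ and $\Vert\mathcal{P}_{T^\perp}\mathbf{Y}\Vert_{\mathrm{op}} < 1$.

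The next step is a restricted-isometry estimate guaranteeing (i): show that with high probability $\Vert\mathcal{P}_T\mathcal{A}^*\mathcal{A}\mathcal{P}_T - \mathcal{P}_T\Vert_{\mathrm{op}} \leq \tfrac{1}{2}$. Here $\mathcal{A}^*\mathcal{A}(\mathbf{Z}) = \sum_{\ell=1}^{d}\mathbf{b}_\ell\mathbf{b}_\ell^*\mathbf{Z}\,\mathbf{a}_\ell\mathbf{a}_\ell^*$ is a sum of $d$ independent random operators whose randomness comes only from the Gaussian rows $\mathbf{a}_\ell$; since $\mathbb{E}(\mathbf{a}_\ell\mathbf{a}_\ell^*) = \mathbf{I}_N$ and $\sum_\ell\mathbf{b}_\ell\mathbf{b}_\ell^* = \mathbf{B}^*\mathbf{B} = \mathbf{I}_K$, the expectation is the identity on matrices and hence $\mathcal{P}_T$ on $T$. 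A non-commutative Bernstein inequality then yields concentration, with the per-summand operator norms controlled exactly by the coherence parameters $\mu_h^2$ (governing $\max_\ell|\mathbf{b}_\ell^*\mathbf{u}|$) and $\mu_{\max}^2$; this is precisely where the hypothesis $\max(K\mu_{\max}^2, N\mu_h^2) \leq d/(C_\alpha\log^3 d)$ enters to keep both the variance proxy and the uniform bound small.

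The heart of the argument, and the main obstacle, is the construction of the (inexact) dual certificate of (ii) by Gross's golfing scheme. I would partition $[d]$ into $P = O(\log d)$ disjoint blocks $\Gamma_1,\dots,\Gamma_P$, form the partial operators $\mathcal{A}_p$ from the rows indexed by $\Gamma_p$, and build $\mathbf{Y}$ iteratively via $\mathbf{Y}_p = \mathbf{Y}_{p-1} + \mathcal{A}_p^*\mathcal{A}_p\mathbf{W}_{p-1}$ with residual $\mathbf{W}_p = \mathbf{u}\mathbf{v}^* - \mathcal{P}_T\mathbf{Y}_p$. Using independence across blocks together with the isometry estimate, one shows $\Vert\mathbf{W}_p\Vert_F$ contracts geometrically, so after $O(\log d)$ rounds $\mathcal{P}_T\mathbf{Y}$ matches $\mathbf{u}\mathbf{v}^*$ to within the needed tolerance, while a second matrix-Bernstein bound keeps $\Vert\mathcal{P}_{T^\perp}\mathbf{Y}_p\Vert_{\mathrm{op}}$ strictly below $1$. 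The delicate point is the asymmetry of the model: only $\mathbf{A}$ is random, whereas the Fourier factor $\mathbf{B}$ is deterministic, so every concentration bound must be phrased through the coherence $\mu_h$ of $\mathbf{h}_0$ against the fixed rows $\mathbf{b}_\ell$ rather than through fresh randomness. The three logarithmic factors in the hypothesis arise from the $O(\log d)$ golfing blocks, the tail/union bounds in each Bernstein step, and the coherence control of the deterministic Fourier rows.

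Finally I would collect the high-probability events, namely the isometry bound, the geometric contraction, and the operator-norm control at each of the $O(\log d)$ blocks, and apply a union bound. Choosing the constant $C_\alpha = O(\alpha)$ so that each failure event has probability $O(d^{-\alpha})$ and summing over the $O(\log d)$ events gives total success probability $1 - O(d^{-\alpha+1})$, at which point (i) and (ii) hold simultaneously and $\mathbf{X}_0$ is certified as the unique minimizer, so $\mathbf{y} = \mathbf{f}*\mathbf{g}$ is separated up to the scalar ambiguity. The tightness remark follows from a dimension count: $T$ has dimension $K + N - 1$ while $\mathcal{A}$ maps into $\mathbb{C}^d$, so even in the most favorable low-coherence regime injectivity on $T$ forces $\max(K,N) \leq d$.
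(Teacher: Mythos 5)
The paper itself gives no proof of this statement: it is imported verbatim from \cite{ahmed2013blind} (Theorem 1), and the minimization problem it refers to is the lifted nuclear-norm program restated in the paper's appendix on convex relaxation. Your sketch is a faithful reconstruction of that reference's actual argument --- lifting to rank-one recovery, reduction of uniqueness to tangent-space injectivity plus an (inexact) dual certificate, matrix-Bernstein concentration governed by the coherences $\mu_h^2$ and $\mu_{\max}^2$, and a golfing-scheme certificate construction with the stated probability accounting --- so it matches the source's approach essentially step for step.
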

\begin{Theorem} \textbf{(Stability From Noise)} (\cite{ahmed2013blind}, Theorem 2) Let $\b{X}_0 = \b{h}_0 \b{m}_{0}^*$ and suppose the condition of previous theorem holds. We observe that 
\begin{align*}
\b{y}= \mathcal{A}(\b{X}_0) + \b{e},
\end{align*}
where $\b{e} \in \mathbb{R}^d$ is an unknown noise vector with $\Vert z\Vert_{2} \leq \delta$, and estimate $\b{X}_0$ by solving
\begin{align*}
\min \quad \Vert\b{X}\Vert_{*}, \qquad \text{subject to} \quad \Vert\widehat{y} - \mathcal{A}(\b{X})\Vert_{2} \leq \delta.
\end{align*}
Let $\lambda_{\min}, \lambda_{\max}$ be the smallest/largest non-zero eigenvalue of $\mathcal{A}\mathcal{A}^*$. Then with probability $1 - d^{-\alpha +1}$, the solution $\b{X}$ will obey
\begin{align*}
\Vert\b{X} - \b{X}_0\Vert_{F} \leq C \dfrac{\lambda_{\max}}{\lambda_{\min}} \sqrt{\min(K,N)} \delta,
\end{align*}
for a fixed constant $C$.
\end{Theorem}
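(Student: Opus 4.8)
The plan is to run the standard robust recovery argument for nuclear-norm minimization, controlling the error matrix $\b{H} := \b{X} - \b{X}_0$ through the interplay of two facts: feasibility of both $\b{X}$ and $\b{X}_0$, and optimality of $\b{X}$ in the nuclear norm. Since $\b{X}_0$ satisfies the constraint $\Vert\b{y} - \mathcal{A}(\b{X}_0)\Vert_2 = \Vert\b{e}\Vert_2 \leq \delta$ while $\b{X}$ is feasible by assumption, the triangle inequality gives the measurement control
\begin{align*}
\Vert\mathcal{A}(\b{H})\Vert_2 \leq \Vert\mathcal{A}(\b{X}) - \b{y}\Vert_2 + \Vert\b{y} - \mathcal{A}(\b{X}_0)\Vert_2 \leq 2\delta.
\end{align*}
Optimality gives $\Vert\b{X}\Vert_* \leq \Vert\b{X}_0\Vert_*$, which, after writing $\b{X} = \b{X}_0 + \b{H}$, I would convert into a bound on the component of $\b{H}$ that is invisible to $\mathcal{A}$.

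The central device is the tangent space $T$ to the rank-one matrices at $\b{X}_0 = \b{h}_0\b{x}_0^*$, namely $T = \{\b{h}_0 \b{v}^* + \b{u}\b{x}_0^* : \b{u} \in \mathbb{C}^K, \b{v} \in \mathbb{C}^N\}$, together with the orthogonal projections $P_T$ and $P_{T^\perp}$. First I would split $\b{H} = P_T \b{H} + P_{T^\perp}\b{H}$ and note that every matrix in $T$ has rank at most two, so $\Vert P_T\b{H}\Vert_* \leq \sqrt{2}\,\Vert P_T\b{H}\Vert_F$. The approximate dual certificate produced in the proof of the previous theorem, i.e.\ a matrix $\b{Y} = \mathcal{A}^*(\b{z})$ with $P_T \b{Y}$ close to the sign matrix $\b{h}_0\b{x}_0^*/(\Vert\b{h}_0\Vert\Vert\b{x}_0\Vert)$ and $\Vert P_{T^\perp}\b{Y}\Vert_{\mathrm{op}} \leq 1/2$, is precisely what turns the optimality inequality into a cone condition of the form
\begin{align*}
\Vert P_{T^\perp}\b{H}\Vert_* \leq C_1 \Vert P_T\b{H}\Vert_F + C_2\,\Vert\mathcal{A}(\b{H})\Vert_2 .
\end{align*}
Without this certificate the null-space part of $\b{H}$ could be arbitrarily large, so it is the essential structural input.

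Next I would invoke the conditioning of $\mathcal{A}$. On the low-dimensional subspace $T$ the operator is a near-isometry, so $\Vert P_T\b{H}\Vert_F \leq c_0\,\lambda_{\min}^{-1/2}\Vert\mathcal{A}(P_T\b{H})\Vert_2$, while the upper eigenvalue bound gives $\Vert\mathcal{A}(P_{T^\perp}\b{H})\Vert_2 \leq \lambda_{\max}^{1/2}\Vert P_{T^\perp}\b{H}\Vert_F$. Writing $\mathcal{A}(P_T\b{H}) = \mathcal{A}(\b{H}) - \mathcal{A}(P_{T^\perp}\b{H})$ and substituting the cone condition together with $\Vert P_{T^\perp}\b{H}\Vert_F \leq \Vert P_{T^\perp}\b{H}\Vert_*$ closes a self-referential inequality for $\Vert P_T\b{H}\Vert_F$. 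Finally $\Vert\b{H}\Vert_F \leq \Vert P_T\b{H}\Vert_F + \Vert P_{T^\perp}\b{H}\Vert_F \leq \Vert P_T\b{H}\Vert_F + \Vert P_{T^\perp}\b{H}\Vert_*$, and converting the surviving nuclear norm back to Frobenius through the universal bound $\Vert\cdot\Vert_* \leq \sqrt{\min(K,N)}\,\Vert\cdot\Vert_F$ produces the advertised factor $\sqrt{\min(K,N)}$, with the ratio $\lambda_{\max}/\lambda_{\min}$ appearing as the condition number of $\mathcal{A}$ from combining the two one-sided eigenvalue estimates.

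The hard part will not be the norm bookkeeping above but the two ingredients imported from the previous theorem: constructing the approximate dual certificate $\b{Y}$ (typically by a golfing-type iterative scheme exploiting the Gaussian model for $\b{A}$) and establishing the restricted near-isometry of $\mathcal{A}$ on $T$. Both hold only with high probability and only under the coherence hypotheses $\max(K\mu_{\max}^2,\, N\mu_h^2) \leq c\,d/\log^3 d$, which is exactly why the conclusion carries the probability $1 - d^{-\alpha+1}$. Verifying these concentration estimates is the genuinely difficult analytic work, so I would cite the lemmas underlying the previous theorem for them rather than reprove them here, and treat the stability bound itself as the deterministic consequence obtained by the decomposition argument once those events occur.
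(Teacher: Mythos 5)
The first thing to note is that the paper contains no proof of this statement at all: it is reproduced (typos included, e.g.\ $\mathbf{m}_0$ for $\mathbf{x}_0$ and $\widehat{y}$ for $\mathbf{y}$) directly from Theorem 2 of \cite{ahmed2013blind} as an imported result, so there is no in-paper argument to compare yours against. Measured against the proof in the cited source, your sketch has the right overall shape, and it is the standard one: feasibility of both $\mathbf{X}$ and $\mathbf{X}_0$ giving $\Vert\mathcal{A}(\mathbf{H})\Vert_2\leq 2\delta$, nuclear-norm optimality combined with the approximate dual certificate $\mathbf{Y}=\mathcal{A}^*(\mathbf{z})$ from the exact-recovery theorem to get a cone-type bound on $\Vert P_{T^\perp}\mathbf{H}\Vert_*$, and the conditioning of $\mathcal{A}$ plus a nuclear-to-Frobenius conversion to finish. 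Deferring the two probabilistic ingredients (certificate construction and restricted isometry on $T$) to the machinery behind the previous theorem is also exactly what the source does, and is the right call.

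There is, however, one step in your bookkeeping that is genuinely wrong rather than merely compressed: the inequality $\Vert P_T\mathbf{H}\Vert_F \leq c_0\,\lambda_{\min}^{-1/2}\Vert\mathcal{A}(P_T\mathbf{H})\Vert_2$. The quantity $\lambda_{\min}$ is the smallest \emph{nonzero} eigenvalue of $\mathcal{A}\mathcal{A}^*$, so it lower-bounds $\mathcal{A}$ only on the row space $\mathrm{range}(\mathcal{A}^*)$; it says nothing about vectors in $T$, which is not contained in that row space --- indeed, without the separate high-probability local isometry on $T$ (which you correctly list later as an imported ingredient, with an absolute constant, not a $\lambda_{\min}$-dependent one), $T$ could even meet the null space of $\mathcal{A}$. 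The way the eigenvalues actually enter in \cite{ahmed2013blind} (following the Cand\`es--Plan template) is through a split of $\mathbf{H}$ into its component on $\mathrm{range}(\mathcal{A}^*)$ and its component in the null space of $\mathcal{A}$: the row-space part is controlled by $\lambda_{\min}$ and $\Vert\mathcal{A}(\mathbf{H})\Vert_2\leq 2\delta$, the null-space part is handled by the cone condition together with the local isometry on $T$ and the upper bound $\lambda_{\max}$, and the factor $\sqrt{\min(K,N)}$ appears when the nuclear norm of the (potentially full-rank) row-space part is bounded by $\sqrt{\min(K,N)}$ times its Frobenius norm. Your $T$/$T^\perp$-only accounting can be repaired along these lines, but as written the step injecting $\lambda_{\min}$ does not hold, and without it your self-referential inequality for $\Vert P_T\mathbf{H}\Vert_F$ does not close with the stated constants.
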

\subsection{Wirtinger Gradient Descent} \label{sec: Wirtinger Gradient Descent}
In \cite{li2019rapid}, the approach is to solve the minimization problem (Equation \ref{eqn: blindminproblem}) using Wirtinger gradient descent. In this subsection, the algorithm is introduced as well as the main theorems which establish convergence of the proposed algorithm to the true solution.\\
\indent The algorithm consists of two parts: first an initial guess, and secondly, a variation of gradient descent, starting at the initial guess to converge to the true solution. Theoretical results are established for avoiding getting stuck in local minima.\\
\indent This is ensured by determining that the iterates are inside some properly chosen basin of attraction of the true solution.
\subsection{Basin of Attraction}
\begin{Proposition} \textbf{Basin of Attraction}: (Section 3.1, \cite{li2019rapid}) Three neighbourhoods are introduced whose intersection will form the basis of attraction of the solution:\\
(i) \textbf{Non-uniqueness}: Due to the scale ambiguity, for numerical stability we introduce the following neighbourhood
\begin{align*}
N_{L_0} := \{(\b{h},\b{x}) \mid \Vert\b{h}\Vert \leq 2\sqrt{L_0}, \Vert\b{x}\Vert \leq 2\sqrt{L_0}\}, \quad L_0 = \Vert\b{h}_0\Vert \cdot \Vert\b{x}_0\Vert.
\end{align*}
(ii) \textbf{Incoherence}: The number of measurements required for solving the blind deconvolution problem depends on how much $\b{h}_0$ is correlated with the rows of the
matrix $\b{B}$, with the hopes of minimizing the correlation. We define the incoherence between the rows of $\b{B}$ and $\b{h}_0$, via
\begin{align*}
\mu_{\b{h}}^{2} = \dfrac{d\Vert\b{B}\b{h}_0\Vert_{\infty}^2}{\Vert\b{h}_0\Vert^2}.
\end{align*}
To ensure that the  incoherence of the solution is under control, we introduce the neighborhood
\begin{align}
N_\mu := \{\b{h} \mid \sqrt{d} \Vert\b{B}\b{h}\Vert_{\infty} \leq 4 \sqrt{L_0}\mu\}, \quad \mu_{\b{h}} \leq \mu.
\end{align}
(iii) \textbf{Initial guess}: A carefully chosen initial guess is required due to the non-convexity of the function we wish to minimize. The distance to the true solution is defined via the following neighborhood
\begin{align}
N_\epsilon := \{(\b{h},\b{x}) \mid \Vert\b{hx}^* - \b{h}_0\b{x}_{0}^*\Vert_F \leq \epsilon L_0\}, \quad 0 < \epsilon \leq \dfrac{1}{15}.
\end{align}
Thus the basin of attraction is chosen as $N_{d_0} \cap N_\mu \cap N_\epsilon$, where the true solution lies.
\end{Proposition}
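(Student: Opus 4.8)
The plan is to establish the only part of the statement that carries provable content, namely the clause asserting that the true solution lies in the intersection $N_{L_0} \cap N_\mu \cap N_\epsilon$; the three neighborhoods are themselves definitions, so it suffices to verify membership in each one. By the scaling normalization fixed earlier, I would take $(\b{h}_0, \b{x}_0)$ to be the \emph{balanced} representative of the equivalence class $\{(\alpha\b{h}_0, \alpha^{-1}\b{x}_0) : \alpha \neq 0\}$ satisfying $\Vert\b{h}_0\Vert = \Vert\b{x}_0\Vert = \sqrt{L_0}$, and then check the three containments against this choice in turn.

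For $N_{L_0}$, the bounds $\Vert\b{h}_0\Vert = \sqrt{L_0} \leq 2\sqrt{L_0}$ and $\Vert\b{x}_0\Vert = \sqrt{L_0} \leq 2\sqrt{L_0}$ follow immediately from the normalization, so $(\b{h}_0,\b{x}_0) \in N_{L_0}$. For $N_\mu$, I would unwind the definition of the incoherence parameter: rearranging $\mu_{\b{h}}^2 = d\Vert\b{B}\b{h}_0\Vert_\infty^2 / \Vert\b{h}_0\Vert^2$ gives $\sqrt{d}\,\Vert\b{B}\b{h}_0\Vert_\infty = \mu_{\b{h}}\sqrt{L_0}$, and the hypothesis $\mu_{\b{h}} \leq \mu$ then yields $\sqrt{d}\,\Vert\b{B}\b{h}_0\Vert_\infty \leq \mu\sqrt{L_0} \leq 4\mu\sqrt{L_0}$, placing $\b{h}_0$ in $N_\mu$. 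For $N_\epsilon$, the defining inequality collapses to $\Vert\b{h}_0\b{x}_0^* - \b{h}_0\b{x}_0^*\Vert_F = 0 \leq \epsilon L_0$, so the true lifted matrix sits at the exact center of $N_\epsilon$.

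Because each containment follows directly from the normalization and the definition of $\mu_{\b{h}}$, I do not anticipate any analytical obstacle here. The one point that genuinely requires care is the \emph{scaling ambiguity}: the norm bounds defining $N_{L_0}$ and $N_\mu$ are meaningful only after committing to the balanced representative, so the argument must explicitly invoke that normalization rather than work with an arbitrary member of the equivalence class $\{(\alpha\b{h}_0, \alpha^{-1}\b{x}_0)\}$. I would also flag that the slack constants — the factor $2$ in $N_{L_0}$, the factor $4$ in $N_\mu$, and the restriction $\epsilon \leq 1/15$ in $N_\epsilon$ — are not needed to contain the exact solution (which lies comfortably inside each set, and indeed at the center of $N_\epsilon$), but are reserved to provide a robust buffer in which the Wirtinger gradient-descent iterates can subsequently be shown to remain and converge; noting this is precisely what makes the proposition useful for the convergence analysis that follows.
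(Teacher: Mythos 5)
Your proposal is correct, but note that the paper itself offers no proof of this Proposition at all: it is presented purely as setup --- three definitions plus the assertion, imported from Section 3.1 of \cite{li2019rapid}, that the true solution lies in their intersection. You have correctly isolated the only clause carrying provable content and supplied the verification the paper leaves implicit. Each of your three checks is valid: under the normalization $\Vert\b{h}_0\Vert = \Vert\b{x}_0\Vert = \sqrt{L_0}$ (which the paper does fix explicitly just after discussing the scale ambiguity $(\b{h}_0,\b{x}_0) \mapsto (\alpha\b{h}_0, \alpha^{-1}\b{x}_0)$), membership in $N_{L_0}$ is immediate; rearranging the definition of $\mu_{\b{h}}$ gives $\sqrt{d}\,\Vert\b{B}\b{h}_0\Vert_{\infty} = \mu_{\b{h}}\sqrt{L_0} \leq \mu\sqrt{L_0} \leq 4\mu\sqrt{L_0}$, so $\b{h}_0 \in N_\mu$; and the lifted matrix $\b{h}_0\b{x}_0^*$ satisfies the $N_\epsilon$ inequality with left-hand side equal to zero. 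Your closing observations are also the right ones: the factors $2$ and $4$ and the cap $\epsilon \leq \tfrac{1}{15}$ are not needed to contain the exact solution but provide the buffer in which the Wirtinger gradient-descent iterates are later shown to remain (this is precisely how the neighborhoods are used in the paper's Main Theorems 1 and 2), and the norm bounds defining $N_{L_0}$ and $N_\mu$ are only meaningful once the balanced representative of the scaling equivalence class has been fixed. One cosmetic discrepancy worth flagging: the paper's final sentence writes the basin as $N_{d_0} \cap N_\mu \cap N_\epsilon$, an inherited notational slip ($d_0$ versus $L_0$) from \cite{li2019rapid}, which your consistent use of $N_{L_0}$ silently corrects.
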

\begin{figure}[H]
\centering
\includegraphics[width=0.6\textwidth]{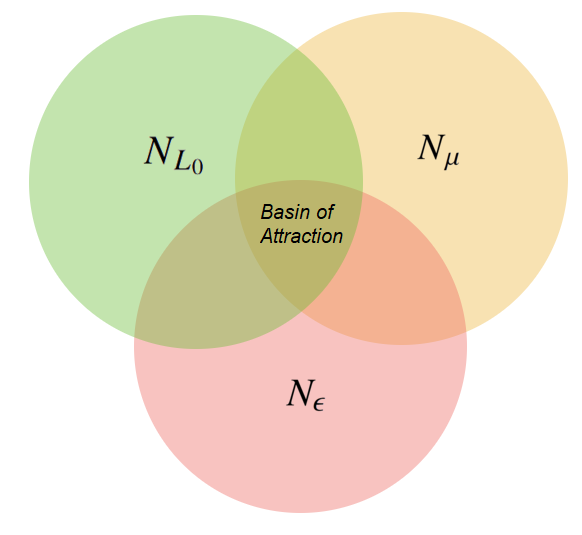}
\caption{Basin of Attraction: $N_{L_0} \cap N_\mu \cap N_\epsilon$}
\end{figure}
Our approach consists of two parts: We first construct an initial guess that is inside the basin of attraction $N_{L_0} \cap N_\mu \cap N_\epsilon$. We then apply a regularized Wirtinger gradient descent algorithm that will ensure that all the iterates remain inside  $N_{L_0} \cap N_\mu \cap N_\epsilon$. To achieve that, we add a regularizing function $G(\b{h},\b{x})$ to the objective function $F(\b{h},\b{x})$ to enforce that the iterates remain inside  $N_{L_0} \cap N_\mu$. Hence we aim the minimize the following regularized objective function, in order to solve the blind deconvolution problem:
\begin{align*}
\tilde{F}(\b{h}, \b{x}) := F(\b{h}, \b{x}) + G(\b{h}, \b{x}),
\end{align*}
where $F(\b{h},\b{x}) := \Vert\mathcal{A}(\b{h}\b{x}^* - \b{h}_0 \b{x}_{0}^*) - \b{e}\Vert^2$ is defined as before and $G(\b{h}, \b{x})$ is the penalty function, of the form
\begin{align*}
G(\b{h},\b{x}) := \rho  \bigg[ G_0 \bigg( \dfrac{\Vert\b{h}\Vert^2}{2L}\bigg) + G_0 \bigg( \dfrac{\Vert\b{x}\Vert^2}{2L}\bigg) + \sum_{\ell = 1}^{d} G_0 \bigg(\dfrac{d |\b{b}_{\ell}^* \b{h}|^2}{8 L \mu^2}\bigg)\bigg],
\end{align*}
where 
\begin{align*}
G_0 (z) := \max\{z - 1, 0 \}^2, \quad \rho \geq L^2 + 2\Vert\b{e}\Vert^2.
\end{align*}
It is assumed $\dfrac{9}{10}L_0 \leq L \leq \dfrac{11}{10}L_0$ and $\mu \geq \mu_h$.
\begin{Remark} The matrix $\mathcal{A}^* (\b{e}) = \sum_{k=1}^{d} \b{e}_k \b{b}_k \b{a}_{k}^*$ as a sum of d rank-1 random matrices, has nice concentration of measure properties. Asymptotically, $\Vert\mathcal{A}^*(e)\Vert$ converges to $0$ with rate $O(d^{1/2})$. Note that
\begin{align*}
F(\b{h},\b{x}) = \Vert\b{e}\Vert^2 + \Vert\mathcal{A}(\b{hx}^* - \b{h}_0 \b{x}_{0}^*)\Vert_{F}^2 - 2Re(\langle \mathcal{A}^* (\b{e}), \b{hx}^* - \b{h}_0 \b{x}_{0}^*\rangle).
\end{align*}
If one lets $d \longrightarrow \infty$, then $\Vert\b{e}\Vert^2 \sim \dfrac{\sigma^2 d_{0}^2}{2d}\rchi_{2d}^{2}$ will converge almost surely to $\sigma^2 d_{0}^2$\footnote{This is due to the law of large numbers} and the cross term $Re(\langle \b{hx}^* - \b{h}_{0}\b{x}_{0}^*, \mathcal{A}^*(\b{e}) \rangle)$ will converge to $0$. In other words, asymptotically
\begin{align*}
\lim_{d \to \infty} F(\b{h},\b{x}) = F_0 (\b{h},\b{x}) + \sigma^2 L_{0}^2,
\end{align*}
for all fixed $(\b{h},\b{x})$. This implies that if the number of measurements is large, then $F(\b{h},\b{x})$ behaves "almost like" $F_0 (\b{h},\b{x}) = \Vert\mathcal{A}(\b{hx}^* - \b{h}_{0}\b{x}_{0}^*)\Vert^2$, the noiseless version of $F(\b{h},\b{x})$. So for large $d$, we effectively ignore the noise.
\end{Remark}

\begin{Theorem} For any given $Z \in \mathbb{C}^{K \times N}$, we have that
\begin{align*}
\mathbb{E}(\mathcal{A}^*(\mathcal{A} (\b{Z}))) = \b{Z}.
\end{align*}
\end{Theorem}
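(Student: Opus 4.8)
The plan is to unfold the two definitions and reduce the claim to the two structural identities recorded just before the statement, namely $\sum_{k=1}^{d}\b{b}_k\b{b}_k^* = \b{B}^*\b{B} = \b{I}_K$ and $\mathbb{E}(\b{a}_k\b{a}_k^*) = \b{I}_N$. The whole argument is a deterministic rearrangement followed by a single application of linearity of expectation; no concentration inequality or higher-moment estimate is needed, since only the first (second-order) moment $\mathbb{E}(\b{a}_k\b{a}_k^*)$ enters.

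First I would substitute $\b{z} = \c{A}(\b{Z})$, whose $k$-th entry is the scalar $\b{b}_k^* \b{Z} \b{a}_k$, into the definition of the adjoint, obtaining
\begin{align*}
\c{A}^*(\c{A}(\b{Z})) = \sum_{k=1}^{d} (\b{b}_k^* \b{Z} \b{a}_k)\, \b{b}_k \b{a}_k^*.
\end{align*}
Next I would use that $\b{b}_k^* \b{Z}\b{a}_k$ is a scalar, so it can be absorbed into the rank-one outer product, rewriting each summand as the $K\times N$ matrix $\b{b}_k \b{b}_k^* \b{Z}\, \b{a}_k \b{a}_k^*$. Then I would take expectations term by term, treating the $\b{b}_k$ (columns of the deterministic partial DFT matrix $\b{B}$) as fixed and pulling them outside the expectation, which leaves $\b{b}_k \b{b}_k^* \b{Z}\,\mathbb{E}(\b{a}_k\b{a}_k^*) = \b{b}_k \b{b}_k^* \b{Z}$ by $\mathbb{E}(\b{a}_k\b{a}_k^*)=\b{I}_N$. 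Summing over $k$ and invoking $\sum_{k=1}^{d}\b{b}_k\b{b}_k^* = \b{I}_K$ collapses the expression to $\b{I}_K \b{Z} = \b{Z}$, as desired.

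The computation itself is routine; the only points needing care are bookkeeping ones, which is where I expect the only real friction. I must be explicit that the randomness lives entirely in the $\b{a}_k$ (inherited from the Gaussian matrix $\b{A}$), while the $\b{b}_k$ are deterministic, so that factoring $\b{b}_k\b{b}_k^*\b{Z}$ out of the expectation is legitimate; and I must justify the scalar-commutation step, i.e. that $(\b{b}_k^* \b{Z}\b{a}_k)\,\b{b}_k\b{a}_k^* = \b{b}_k\b{b}_k^*\b{Z}\b{a}_k\b{a}_k^*$ precisely because the middle factor $\b{b}_k^* \b{Z}\b{a}_k$ is a $1\times 1$ quantity. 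Note that no independence among the distinct $\b{a}_k$ is required: linearity of expectation is applied only summand by summand, so the identity holds as soon as each $\b{a}_k$ individually satisfies $\mathbb{E}(\b{a}_k\b{a}_k^*)=\b{I}_N$.
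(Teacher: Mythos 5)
Your proof is correct, and it rests on exactly the two facts the paper itself uses --- $\mathbb{E}(\b{a}_k\b{a}_k^*) = \b{I}_N$ and $\sum_{k=1}^{d}\b{b}_k\b{b}_k^* = \b{I}_K$ --- but the execution is genuinely different. The paper first invokes linearity to reduce to the case where $\b{Z}$ is an elementary matrix ($Z_{i,j}=1$ and zero elsewhere), then carries out an entry-wise computation: $(\mathcal{A}(\b{Z}))_k = b_{ki}^* a_{kj}$, so $\mathbb{E}(\mathcal{A}^*(\mathcal{A}(\b{Z}))) = \sum_k b_{ki}^*\b{b}_k\,\mathbb{E}(a_{kj}\b{a}_k^*)$, which collapses coordinate by coordinate to $\b{e}_i\b{e}_j^*$. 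You instead keep $\b{Z}$ arbitrary and use the scalar-commutation identity $(\b{b}_k^*\b{Z}\b{a}_k)\,\b{b}_k\b{a}_k^* = \b{b}_k\b{b}_k^*\b{Z}\,\b{a}_k\b{a}_k^*$, which lets the two structural identities act globally: the expectation replaces $\b{a}_k\b{a}_k^*$ by $\b{I}_N$, and the sum over $k$ replaces $\sum_k\b{b}_k\b{b}_k^*$ by $\b{I}_K$. Your route is coordinate-free and avoids the index bookkeeping entirely (bookkeeping in which the paper's write-up in fact contains slips, e.g.\ a summation bound of $L$ rather than $d$ and a stray $z_j^*$), at the price of having to justify the scalar absorption step, which you do. Your closing observations are also accurate: only the per-index second moment of $\b{a}_k$ and the deterministic nature of the $\b{b}_k$ are needed, so no independence across $k$ enters either argument.
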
 
\begin{proof}
By linearity, sufficient to prove for $Z \in \mathbb{C}^{K \times N}$ where $Z_{i,j} = 1$, $0$ otherwise. Then we have that
\begin{align*}
\mathbb{E}((\mathcal{A}^*\mathcal{A} (\b{Z}))) = \mathbb{E} (\sum_{k=1}^{L}b_{ki}^* a_{kj} \b{b}_k \b{a}_{k}^*)) =  \sum_{k=1}^{d}(b_{ki}^* \b{b}_{k} \mathbb{E}(a_{kj}\b{a}_{k}^*)) = \sum_{k=1}^{d}(b_{ki}^* \b{b}_{k}) z_{j}^* = \b{e}_{i}\b{e}_{j}^* = Z,
\end{align*}
Thus we have that
\begin{align*}
\mathbb{E}( \mathcal{A}^* (\b{y}))= \mathbb{E} ( \mathcal{A}^*(\mathcal{A}(\b{h}_0 \b{x}_{0}^*) + \b{e}) = \mathbb{E}( \mathcal{A}^*(\mathcal{A} (\b{h}_0 \b{x}_{0}^*) )) + \mathbb{E}(\mathcal{A}^* (\b{e})) = \b{h}_0 \b{x}_{0}^*,
\end{align*}
since $\mathbb{E}(\mathcal{A}^* (\b{e}))$ by the definition of $\b{e}$.
\end{proof}
Hence it makes logical sense that the leading singular value and vectors of $\mathcal{A}^*(\b{y})$ would be a good approximation of $L_0$ and $(\b{h}_0,\b{x}_0)$ respectively.
\subsection{Algorithms} \label{sec: Blind Deconvolution Algorithms}
We can now state the algorithm for generating an initial estimate.
\begin{algorithm}[H]
\caption{Blind Deconvolution Initial Estimate} \label{alg: blindinitial}
\begin{algorithmic}
\Require Blind Deconvolutional measurements $\b{y}$, $K = \text{supp}(f)$. 
\Ensure Estimate underlying signal and blurring function.
\State 1) Compute $\mathcal{A}^* (\b{y})$ and find the leading singular value, left and right singular vectors of $\mathcal{A}^* (\b{y})$, denoted by $d$, $\tilde{\b{h}}_0$, and $\tilde{\b{x}}_0$ respectively.
\State 2) Solve the following optimization problem
\begin{align*}
\b{u_0} := \underset{\b{z}}{argmin}\Vert\b{z} - \sqrt{d}\tilde{\b{h}}_0\Vert^2, \; \;  \text{subject to} \; \sqrt{d} \Vert B\b{z}\Vert_{\infty} \leq 2\sqrt{L}\mu,
\end{align*}
and $\b{x_0} = \sqrt{L}\tilde{\b{x}}_0$.
\end{algorithmic}
\end{algorithm}
Since we are dealing with complex variables, for the gradient descent, Wirtinger derivatives are utilized. Since $\tilde{F}$ is a real-valued function, we only need to consider the derivative of $\tilde{F}$, with respect to $\bar{\b{h}}$ and $\bar{\b{x}}$, and the corresponding updates of $\b{h}$ and $\b{x}$ since
\begin{align*}
\dfrac{\partial \tilde{F}}{\partial \bar{\b{h}}} = \overline{\dfrac{\partial \tilde{F}}{\partial \b{h}}}, \quad \dfrac{\partial \tilde{F}}{\partial \bar{\b{x}}} = \overline{\dfrac{\partial \tilde{F}}{\partial \b{x}}}.
\end{align*}
In particular, we denote 
\begin{align}
\nabla \tilde{F}_{\b{h}} := \dfrac{\partial \tilde{F}}{\partial \bar{\b{h}}}, \quad \nabla \tilde{F}_{}\b{x} := \dfrac{\partial \tilde{F}}{\partial \bar{\b{x}}}.
\end{align}
We can now state the full algorithm.
\begin{algorithm}[H]
\caption{Wirtinger Gradient Descent Blind Deconvolution Algorithm} \label{alg: FullBlindDeconvolution}
\begin{algorithmic}
\Require Blind Deconvolutional measurements $\b{y}$, $K = \text{supp}(f)$.
\Ensure Estimate underlying signal and blurring function.
\State 1) Compute $\mathcal{A}^* (\b{y})$ and find the leading singular value, left and right singular vectors of $\mathcal{A}^* (\b{y})$, denoted by $d$, $\tilde{\b{h}}_0$, and $\tilde{\b{x}}_0$ respectively.
\State 2) Solve the following optimization problem
\begin{align*}
\b{u_0} := \underset{\b{z}}{argmin}\Vert\b{z} - \sqrt{L}\tilde{\b{h}}_0\Vert^2, \; \;  \text{subject to} \; \sqrt{d} \Vert B\b{z}\Vert_{\infty} \leq 2\sqrt{L}\mu,
\end{align*}
and $\b{x_0} = \sqrt{L}\tilde{\b{x}}_0$.
\State 3) Compute Wirtinger Gradient Descent
\While {halting criterion false}
\State $\b{u_t }= \b{u_{t-1}} - \upeta \nabla \tilde{F}_{\b{h}} (\b{u_{t-1}}, \b{u_{t-1}})$
\State $\b{v_t} = \b{v_{t-1}} - \upeta \nabla \tilde{F}_{\b{x}} (\b{v_{t-1}}, \b{v_{t-1}})$
\EndWhile
\State 4) Set $(\b{h},\b{x}) = (\b{u_t},\b{v_t})$
\end{algorithmic}
\end{algorithm}

In \cite{li2019rapid}, the authors show that with a carefully chosen initial guess $(\b{u}_0, \b{v}_0)$, running Wirtinger gradient descent to minimize  $\tilde{\b{F}}(h, x)$ will guarantee linear convergence of the sequence $(\b{u}_t, \b{v}_t)$ to the global minimum $(\b{h}_0, \b{x}_0)$ in the noiseless case, and also provide robust recovery in the presence of noise. The results are summarized in the following two theorems.
\subsection{Main Theorems} \label{sec: Blind Deconvolution Main Theorems}
\begin{Theorem} \textbf{(Main Theorem 1)} (\cite{li2019rapid}, Theorem 3.1)  The initialization obtained via Algorithm \ref{alg: blindinitial} satisfies 
\begin{align*}
(\b{u}_0, \b{v}_0) \in \dfrac{1}{\sqrt{3}} N_{L_0} \cap \dfrac{1}{\sqrt{3}}N_\mu \cap N_{\tfrac{2}{5}\epsilon}, \quad \dfrac{9}{10}L_0 \leq L \leq \dfrac{11}{10}L_0,
\end{align*}
with probability at least $1 - d^{-\gamma}$ if the number of measurements is sufficient large, that is
\begin{align*}
d \geq C_\gamma (\mu_{h}^2 + \sigma^2) \max\{K,N\} \log^2 d/\epsilon^2,
\end{align*}
where $\epsilon$ is a predetermined constant on $\bigg(0,\frac{1}{15}\bigg]$, and $C_\gamma$ is a constant only linearly depending on $\gamma$ with $\gamma \geq 1$.
\end{Theorem}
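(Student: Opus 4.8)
The plan is to treat this as a spectral initialization guarantee built on the identity $\mathbb{E}(\mathcal{A}^*(\b{y})) = \b{h}_0\b{x}_0^*$ established just before the algorithms. Since the mean of $\mathcal{A}^*(\b{y})$ is exactly the rank-one matrix $\b{h}_0\b{x}_0^*$, whose only nonzero singular value equals $L_0 = \|\b{h}_0\|\,\|\b{x}_0\|$, the strategy is: (i) show that $\mathcal{A}^*(\b{y})$ concentrates around its mean in operator norm; (ii) invoke matrix perturbation theory to conclude that the leading singular triple $(\sigma_1, \tilde{\b{h}}_0, \tilde{\b{x}}_0)$ produced in step~1 is close, up to a global phase, to $(L_0, \b{h}_0/\|\b{h}_0\|, \b{x}_0/\|\b{x}_0\|)$; and (iii) track the two deterministic post-processing operations (the rescaling by $\sqrt{\sigma_1}$, resp.\ $\sqrt{L}$, and the incoherence projection) to verify membership in each of the three shrunken neighborhoods. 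Here I write $\sigma_1$ for the leading singular value that Algorithm~\ref{alg: blindinitial} denotes $d$, to avoid clashing with the ambient dimension.

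For step (i) I would decompose $\mathcal{A}^*(\b{y}) - \b{h}_0\b{x}_0^* = \big(\mathcal{A}^*\mathcal{A}(\b{h}_0\b{x}_0^*) - \b{h}_0\b{x}_0^*\big) + \mathcal{A}^*(\b{e})$ and bound each piece in operator norm, aiming for a deviation of at most a small multiple of $\epsilon L_0$ on an event of probability at least $1 - d^{-\gamma}$. Each piece is a sum of $d$ independent rank-one matrices of the form $\b{b}_\ell \b{a}_\ell^*$ with $\b{a}_\ell$ Gaussian, so the summands are only sub-exponential (products of sub-Gaussians) and a black-box matrix Bernstein bound does not apply directly. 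I would instead truncate the heavy Gaussian tails and split the deviation into an incoherent bulk term and a small residual, controlling the bulk with a Bernstein-type inequality for sums of independent matrices and the residual with an $\epsilon$-net over the sphere, in the spirit of the arguments in \cite{ahmed2013blind} and \cite{li2019rapid}. The incoherence parameter $\mu_h$ enters through the sizes of the terms $\b{b}_\ell^*\b{h}_0$ and the variance proxy, while $\sigma$ enters through $\mathcal{A}^*(\b{e})$; balancing these against the target accuracy is exactly what forces the sample complexity $d \ge C_\gamma(\mu_h^2 + \sigma^2)\max\{K,N\}\log^2 d/\epsilon^2$.

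Given the concentration bound $\|\mathcal{A}^*(\b{y}) - \b{h}_0\b{x}_0^*\| \le c\,\epsilon L_0$, step (ii) is standard. Weyl's inequality yields $|\sigma_1 - L_0| \le c\,\epsilon L_0$, which for $\epsilon \le \tfrac{1}{15}$ gives $\tfrac{9}{10}L_0 \le L \le \tfrac{11}{10}L_0$ and, after rescaling, the norm bounds needed for the $\tfrac{1}{\sqrt{3}}N_{L_0}$ membership. A Wedin $\sin\Theta$ estimate then controls the angles between $\tilde{\b{h}}_0, \tilde{\b{x}}_0$ and the true directions; since the expected matrix is exactly rank one, the spectral gap is the full $L_0$, so the vectors are well aligned and $\|\sigma_1\tilde{\b{h}}_0\tilde{\b{x}}_0^* - \b{h}_0\b{x}_0^*\|_F$ is of order $\epsilon L_0$. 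Choosing the constant $c$ small enough tightens this to $\tfrac{2}{5}\epsilon L_0$, placing the rescaled pair in $N_{\frac{2}{5}\epsilon}$. Finally, because $\b{h}_0$ itself obeys the incoherence constraint (as $\mu_h \le \mu$), the true point is essentially feasible for the convex projection in step~2, so nonexpansiveness of the projection keeps $\b{u}_0$ close to $\sqrt{\sigma_1}\tilde{\b{h}}_0$ and hence to $\b{h}_0$; this delivers the $\tfrac{1}{\sqrt{3}}N_\mu$ membership (enforced exactly by the constraint) while preserving the distance bound.

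The main obstacle is step (i): obtaining the operator-norm concentration of $\mathcal{A}^*(\b{y})$ with the sharp $\max\{K,N\}\log^2 d$ scaling. The Gaussian design makes the rank-one summands heavy-tailed, and their interaction with the deterministic, non-isotropic vectors $\b{b}_\ell$ (carrying the $K/d$ normalization) must be handled with an incoherence-aware truncation and net argument rather than an off-the-shelf inequality. The accompanying bookkeeping of constants is also delicate, since the perturbation must land strictly inside the shrunken sets $\tfrac{1}{\sqrt{3}}N_{L_0}$, $\tfrac{1}{\sqrt{3}}N_\mu$, and $N_{\frac{2}{5}\epsilon}$ so as to leave the safety margin that the subsequent gradient-descent convergence analysis requires.
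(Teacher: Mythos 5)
There is nothing to compare against inside the paper itself: Main Theorem 1 is stated as an imported result, cited verbatim from \cite{li2019rapid} (Theorem 3.1), and the paper supplies no proof of it. Measured against the proof in that reference, your outline reconstructs the actual strategy faithfully: the argument there likewise starts from $\mathbb{E}[\mathcal{A}^*(\b{y})] = \b{h}_0\b{x}_0^*$, establishes operator-norm concentration of $\mathcal{A}^*\mathcal{A}(\b{h}_0\b{x}_0^*)-\b{h}_0\b{x}_0^*$ and of $\mathcal{A}^*(\b{e})$ via a matrix Bernstein inequality adapted to sub-exponential (Orlicz-norm bounded) rank-one summands --- which is exactly the role your truncation-plus-net step plays --- and then combines Weyl and Wedin-type perturbation bounds with an analysis of the convex incoherence projection, as in your step (iii), using that $\b{h}_0$ itself is feasible for the constraint (since $\mu_h \le \mu$ and $L \ge \tfrac{9}{10}L_0$), so that projecting cannot increase the distance to $\b{h}_0$.

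Two caveats keep your proposal a plan rather than a proof. First, essentially all of the theorem's quantitative content --- why the sample complexity is $(\mu_h^2+\sigma^2)\max\{K,N\}\log^2 d/\epsilon^2$ and why the failure probability is $d^{-\gamma}$ --- lives in your step (i), which you correctly identify as the obstacle but do not carry out; without it the statement is not established. Second, the constant-tracking in steps (ii)--(iii) is asserted rather than checked: for instance, membership of $\b{u}_0$ in $\tfrac{1}{\sqrt{3}}N_{L_0}$ does not follow from nonexpansiveness toward $\sqrt{\sigma_1}\tilde{\b{h}}_0$ alone (feasibility of $\b{0}$ only yields $\Vert\b{u}_0\Vert \le 2\sqrt{L}$, which is too weak); one needs nonexpansiveness about the feasible point $\b{h}_0$, giving $\Vert\b{u}_0-\b{h}_0\Vert \le \Vert\sqrt{L}\tilde{\b{h}}_0-\b{h}_0\Vert \lesssim \epsilon\sqrt{L_0}$, combined with $\Vert\b{h}_0\Vert=\sqrt{L_0}$, to land inside the shrunken neighborhood. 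These are precisely the places where the cited proof spends its effort, so your sketch should be read as a correct roadmap to \cite{li2019rapid} rather than an independent or complete argument.
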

The following theorem establishes that as long as the initial guess lies inside the basin of attraction of the true solution, regularized gradient descent will converge to this solution (or to a nearby solution in case of noisy data).
\begin{Theorem} \textbf{(Main Theorem 2)}  (\cite{li2019rapid}, Theorem 3.2)  Assume that the initialization $(\b{u}_0, \b{v}_0) \in \frac{1}{\sqrt{3}}N_{L_0} \cap \frac{1}{\sqrt{3}}N_{\mu} \cap N_{\frac{2}{5}\epsilon}$, and that $d \geq C_\gamma (\mu^2 + \sigma^2) \max \{K,N\} \log^2 (L)/\epsilon^2$. Then Algorithm \ref{alg: FullBlindDeconvolution} will create a sequence $(\b{u}_t, \b{v}_t) \in N_{d_0} \cap N_{\mu} \cap N_{\epsilon}$ which converges geometrically to $(\b{h}_0, \b{x}_0)$ in the sense that with probability at least $1 - 4d^{-\gamma} - \dfrac{1}{\gamma} e^{-(K+N)}$, we have that
\begin{align*}
\max \{ sin \angle (\b{u}_t, \b{h}_0), sin \angle (\b{v}_t, \b{x}_0)\} \leq \dfrac{1}{L_t} \bigg( \dfrac{2}{3} (1 - \eta \omega)^{t/2} \eta L_0 + 50 \Vert\c{A}^* (\b{e})\Vert\bigg),
\end{align*}
and
\begin{align*}
|L_t - L_0| \leq \dfrac{2}{3} (1 - \eta \omega)^{t/2} \epsilon L_0 + 50\Vert\c{A}^* (\b{e})\Vert,
\end{align*}
where $L_t := \Vert\b{u}_t\Vert \cdot \Vert\b{v}_t\Vert, \omega > 0, \eta$ is the fixed stepsize. Here
\begin{align*}
\Vert\c{A}^* (\b{e})\Vert \leq C_0 \sigma d_0 \max \bigg\{ \sqrt{\dfrac{(\gamma + 1)\max\{K,N\} \log \; d}{d}}, \dfrac{(\gamma + d) \sqrt{KN} \log^2 \; d}{d}\bigg\},
\end{align*}
holds with probability $1 - d^{-\gamma}$.
\end{Theorem}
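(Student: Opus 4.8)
The plan is to reduce the theorem to a handful of deterministic conditions that, once shown to hold uniformly over the basin of attraction $N_{L_0}\cap N_\mu\cap N_\epsilon$ with the stated probability, make convergence a routine consequence of a gradient-descent iteration. Four conditions are needed: a \emph{local restricted isometry property} asserting that $\tfrac{2}{3}\|\b{h}\b{x}^*-\b{h}_0\b{x}_0^*\|_F^2\le\|\c{A}(\b{h}\b{x}^*-\b{h}_0\b{x}_0^*)\|^2\le\tfrac{3}{2}\|\b{h}\b{x}^*-\b{h}_0\b{x}_0^*\|_F^2$ throughout the basin; a \emph{robustness} bound controlling $\|\c{A}^*(\b{e})\|$, namely the last displayed inequality of the theorem; a \emph{local regularity condition} of the form $\|\nabla\tilde{F}(\b{h},\b{x})\|^2\ge\omega\,[\tilde{F}(\b{h},\b{x})-c]_+$ for a constant $c$ comparable to the noise floor; and a \emph{local smoothness} (Lipschitz-gradient) bound. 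The first two are the genuinely probabilistic statements, while the latter two are deterministic consequences of them together with the structure of the regularizer $G$.

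First I would establish the local RIP. Since each difference $\b{h}\b{x}^*-\b{h}_0\b{x}_0^*$ lies in the set of matrices of rank at most two, I would cover the relevant low-rank set by an $\epsilon$-net, apply a Gaussian concentration inequality for $\c{A}$ at each net point (using that $\b{A}$ has i.i.d.\ complex Gaussian entries, since $\bar{\b{A}}=\b{F}\b{C}$, and that $\b{B}^*\b{B}=\b{I}_K$), and take a union bound; the net cardinality produces the logarithmic factors and the sample requirement $d\ge C_\gamma(\mu^2+\sigma^2)\max\{K,N\}\log^2 L/\epsilon^2$. The robustness estimate for $\|\c{A}^*(\b{e})\|=\|\sum_{k=1}^{d}\b{e}_k\b{b}_k\b{a}_k^*\|$ then follows from a matrix Bernstein inequality for this sum of independent rank-one matrices, split into a sub-exponential and a sub-Gaussian regime to produce the two terms appearing in the stated maximum.

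With these in hand I would derive the local regularity condition, the heart of the argument. Writing $\nabla\tilde{F}=\nabla F+\nabla G$, the key point is that on the basin the data-fidelity gradient $\nabla F$ already forces descent \emph{except} along the one-dimensional scale-ambiguity direction $(\alpha\b{h},\alpha^{-1}\b{x})$, along which $F$ is flat. I would handle this by measuring distance to the correctly scaled copy of the solution and projecting out the flat direction, so that the local RIP upgrades $\|\c{A}(\b{h}\b{x}^*-\b{h}_0\b{x}_0^*)\|^2$ into a genuine lower bound on $\|\b{h}\b{x}^*-\b{h}_0\b{x}_0^*\|_F^2$. The regularizer $G$ then supplies the remaining control: its three groups of summands penalize precisely the configurations with large $\|\b{h}\|$, large $\|\b{x}\|$, or large $\sqrt{d}\,|\b{b}_\ell^*\b{h}|$ that would leave $N_{L_0}$ or $N_\mu$, so $\nabla G$ points inward there and the combined gradient cannot be small unless $\tilde{F}$ is already near the floor $c$.

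Given the regularity and smoothness conditions, convergence is standard: combining a one-step descent inequality with $\|\nabla\tilde{F}\|^2\ge\omega\,[\tilde{F}-c]_+$ yields $\tilde{F}(\b{u}_{t+1},\b{v}_{t+1})-c\le(1-\eta\omega)\big(\tilde{F}(\b{u}_t,\b{v}_t)-c\big)$ for a suitably small fixed step $\eta$, while the inward-pointing regularizer keeps the iterates inside $N_{L_0}\cap N_\mu$ and the geometric decay of $\tilde{F}$ keeps them inside $N_\epsilon$. I would then convert the resulting bound on $\|\b{u}_t\b{v}_t^*-\b{h}_0\b{x}_0^*\|_F$ into the claimed bounds on $\max\{\sin\angle(\b{u}_t,\b{h}_0),\sin\angle(\b{v}_t,\b{x}_0)\}$ and on $|L_t-L_0|$ by an elementary perturbation estimate relating the Frobenius distance of lifted rank-one matrices to the angles of their factors. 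I expect the local regularity condition to be the main obstacle: correctly isolating the scale-ambiguity direction, verifying that $G$ contributes a lower bound of the right order without inflating the Lipschitz constant, and ensuring all of this holds uniformly over the basin with high probability is where the delicate work lies, whereas the local RIP and the $\|\c{A}^*(\b{e})\|$ estimate are comparatively routine once the right net and concentration tools are fixed.
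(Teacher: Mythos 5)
The paper never proves this theorem at all—it is quoted verbatim from \cite{li2019rapid} (Theorem 3.2 there), and the paper's only engagement with its proof is to restate the four key conditions (local RIP, robustness, local regularity, local smoothness) in Theorem \ref{thm: Four Key Conditions}. Your proposal reconstructs exactly that strategy—establish the four conditions, then run the regularized Wirtinger descent argument and convert the decay of $\tilde{F}$ into the angle and $|L_t - L_0|$ bounds—so it follows essentially the same approach as the cited source, up to minor discrepancies such as stating the local RIP with constants $2/3$ and $3/2$ where the paper's condition uses $3/4$ and $5/4$.
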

It has been shown with high probability that as long as the initial guess lies inside the basin of attraction of the true solution, Wirtinger gradient descent will converge towards the solution.
\subsection{Key Conditions} \label{sec: Blind Deconvolution Key Conditions}
\begin{Theorem} \label{thm: Four Key Conditions} \textbf{Four Key Conditions:}\\
(i) \textbf{(Local RIP Condition)}  (\cite{li2019rapid}, Condition 5.1) The following local Restricted Isometry Property (RIP) for A holds uniformly for all $(\b{h},\b{x})$ in the basin of attraction $(N_{L_0} \cap N_\mu \cap N_{\epsilon})$
\begin{align*}
\dfrac{3}{4} \Vert\b{hx}^* - \b{h}_0 \b{x}_{0}^*\Vert_{F}^2 \leq \Vert\mathcal{A}(\b{hx}^* - \b{h}_0\b{x}_{0}^*)\Vert^2 \leq \dfrac{5}{4} \Vert\b{hx}^* - \b{h}_0 \b{x}_{0}^*\Vert_{F}^2.
\end{align*}
(ii) \textbf{(Robustness Condition)}  (\cite{li2019rapid}, Condition 5.2) For the complex Gaussian noise $\b{e}$, with high probability
\begin{align*}
\Vert\mathcal{A}^* (\b{e})\Vert \leq \dfrac{\epsilon L_0}{10\sqrt{2}},
\end{align*}
for $d$ sufficiently large, that is, $d \geq C_\gamma (\frac{\sigma^2}{\epsilon^2} + \frac{\sigma}{\epsilon}) \max \{K,N\} log \; d$;\\
(iii) \textbf{(Local Regularity Condition)}  (\cite{li2019rapid}, Condition 5.3) There exists a regularity constant $\omega = \dfrac{d_0}{5000} > 0$ such that
\begin{align*}
\Vert\nabla \tilde{F}(\b{h},\b{x})\Vert^2 \geq \omega [\tilde{F}(\b{h},\b{x}) - c]_{+}, \quad c = \Vert\b{e}\Vert^2 + 1700\Vert\mathcal{A}^* (\b{e})\Vert^2,
\end{align*}
for all $(\b{h}, \b{x}) \in N_{L_0} \cap N_{\mu} \cap N_{\epsilon}$;\\
(iv) \textbf{(Local Smoothness Condition)}  (\cite{li2019rapid}, Condition 5.4) Denote $\b{z} := (\b{h},\b{x})$. There exists a constant $C_d$ such that
\begin{align*}
\Vert \nabla f(\b{z} + t\Delta \b{z}) - \nabla f(\b{z})\Vert \leq C_d t\Vert\Delta \b{z}\Vert, \quad 0 \leq t \leq 1,
\end{align*}
for all $\{(\b{z},\Delta \b{z}) \mid \b{z} + t\Delta \b{z} \in N_\epsilon \cap N_{\tilde{F}}, \forall 0 \leq t \leq 1\}$, i.e., the whole segment connecting $(\b{h},\b{x})$ and $\nabla(\b{h},\b{x})$ belongs to the non-convex set $N_\epsilon \cap N_{\tilde{F}}$.
\end{Theorem}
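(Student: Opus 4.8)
The plan is to establish the four conditions separately, since each rests on a distinct probabilistic or analytic mechanism, and then collect them into the single statement. Throughout I would exploit the concrete structure of $\mathcal{A}$: the partial-DFT vectors $\b{b}_\ell$ are deterministic with $\b{B}^*\b{B} = \b{I}_K$ and $\Vert\b{b}_\ell\Vert^2 = K/d$, whereas the columns $\b{a}_\ell$ of $\b{A}^*$ are i.i.d. complex Gaussian with $\mathbb{E}(\b{a}_\ell \b{a}_\ell^*) = \b{I}_N$. The identity $\mathbb{E}(\mathcal{A}^*\mathcal{A}(\b{Z})) = \b{Z}$ proved above is the backbone, since conditions (i) and (ii) are precisely quantitative concentration statements around this expectation.

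For the Local RIP (i), the decisive observation is that every target matrix $\b{h}\b{x}^* - \b{h}_0\b{x}_0^*$ has rank at most two. Fixing such a $\b{Z}$, I would write $\Vert\mathcal{A}(\b{Z})\Vert^2 = \langle \mathcal{A}^*\mathcal{A}(\b{Z}), \b{Z}\rangle = \sum_{\ell} |\b{b}_\ell^* \b{Z} \b{a}_\ell|^2$ and prove concentration around $\Vert\b{Z}\Vert_F^2$ via a Bernstein-type bound for this sum of independent summands; the Gaussianity of $\b{a}_\ell$ governs the sub-exponential tails while the incoherence $\sqrt{d}\Vert\b{b}_\ell\Vert$ caps the influence of any one term. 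To pass from a single $\b{Z}$ to the uniform statement, I would place an $\epsilon$-net over the normalized rank-two differences lying in $N_{L_0}\cap N_\mu \cap N_\epsilon$, union-bound the per-point estimate, and extend to all points by Lipschitz continuity of $\b{Z}\mapsto\Vert\mathcal{A}(\b{Z})\Vert^2$. Balancing the net cardinality against the per-point failure probability yields the sample requirement $d \geq C\max\{K,N\}\log^2 d$.

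For the Robustness condition (ii), I would write $\mathcal{A}^*(\b{e}) = \sum_{\ell=1}^d e_\ell \b{b}_\ell \b{a}_\ell^*$ as a sum of $d$ independent zero-mean rank-one random matrices and apply the matrix Bernstein inequality. The per-coordinate noise variance $\sigma^2 L_0^2/d$, together with $\Vert\b{b}_\ell\Vert^2 = K/d$ and $\mathbb{E}\Vert\b{a}_\ell\Vert^2 = N$, supplies the variance proxy; matching it against the target $\epsilon L_0/(10\sqrt{2})$ produces the stated measurement bound. This is the most self-contained of the four.

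The remaining two are deterministic consequences of (i) and (ii) on the basin. For the Local Regularity condition (iii), I would compute the Wirtinger gradients $\nabla\tilde{F}_{\b{h}}, \nabla\tilde{F}_{\b{x}}$ explicitly, split $\tilde{F} = F + G$, and lower-bound $\Vert\nabla\tilde{F}\Vert^2$ by projecting the gradient onto the direction toward $(\b{h}_0,\b{x}_0)$; the local RIP controls the $F$-part while the penalty $G$ handles the components that would drive an iterate out of $N_{L_0}\cap N_\mu$. For the Local Smoothness condition (iv), $\nabla f$ is a fixed low-degree polynomial in the entries, so bounding $\Vert\nabla f(\b{z}+t\Delta\b{z}) - \nabla f(\b{z})\Vert$ reduces to uniform operator-norm bounds for $\mathcal{A}$ on rank-bounded matrices over the compact sets $N_{L_0}\cap N_\mu$. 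I expect the Local RIP (i) to be the main obstacle, being the only condition that demands a uniform bound over a continuum of matrices and therefore the full net-plus-concentration argument; the Local Regularity condition (iii) is a close second because of the delicate balance between the RIP lower bound and the three penalty terms.
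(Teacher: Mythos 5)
Before assessing your argument, note a structural point: the paper never proves this statement. Each of the four items is a verbatim restatement of Conditions 5.1--5.4 of \cite{li2019rapid}, with the citation attached item by item; the theorem is imported as background for the convergence guarantees of Section \ref{sec: Blind Deconvolution Main Theorems}, and it reappears only in Section \ref{sec: BP Conclusion}, where adapting these conditions to the Khatri--Rao structured matrix $\b{C}_{(k)}' = \b{C} \bullet S_k \bar{\b{C}}$ is explicitly posed as future work. There is therefore no internal proof to compare your proposal against; what you have written is a reconstruction of the arguments of \cite{li2019rapid}, and that is the standard it has to meet.

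Measured against that standard, your skeleton is right --- concentration plus covering for (i), matrix Bernstein for (ii), Wirtinger-gradient analysis on the basin for (iii) and (iv) --- but two of the steps you gloss over are exactly where the difficulty sits, and both would fail as written. First, in (i) you say the sub-exponential tails are capped by ``the incoherence $\sqrt{d}\Vert\b{b}_\ell\Vert$''; that quantity equals $\sqrt{K}$ and is not the operative one. What must be controlled is $\max_\ell |\b{b}_\ell^*\b{h}|$, i.e.\ $\sqrt{d}\Vert\b{B}\b{h}\Vert_\infty$, and this control is available only because the uniform claim is restricted to $\b{h} \in N_\mu$ --- that is the entire reason $N_\mu$ appears in the basin of attraction. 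A net over generic rank-two differences in $N_{L_0} \cap N_\epsilon$ that ignores this constraint has per-point Bernstein failure probability only of order $e^{-cd/K}$, and the union bound then forces $d \geq c\,K\max\{K,N\}$, quadratic in the dimensions rather than the claimed $d \geq C\max\{K,N\}\log^2 d$. Consistent with this, \cite{li2019rapid} does not net over rank-two matrices wholesale: they decompose $\b{h} = \alpha_1\b{h}_0 + \tilde{\b{h}}$, $\b{x} = \alpha_2\b{x}_0 + \tilde{\b{x}}$ into components along and orthogonal to the truth and prove separate uniform estimates for the structured pieces, with the incoherence constraint carrying the $\b{h}$-dependent terms. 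Second, in (iii), projecting the gradient onto ``the direction toward $(\b{h}_0,\b{x}_0)$'' cannot certify regularity because of the scale ambiguity: the objective is flat along the scaling hyperbola through any point, so at $(\beta\b{h}_0, \beta^{-1}\b{x}_0)$ with $\beta$ near $1$ the noiseless $\nabla\tilde{F}$ vanishes identically while the direction toward the fixed anchor $(\b{h}_0,\b{x}_0)$ does not. The reference instead descends toward $(\alpha\b{h}_0, \bar{\alpha}^{-1}\b{x}_0)$ with $\alpha = \alpha(\b{h},\b{x})$ chosen to match scales, which is what couples the gradient to $\Vert\b{h}\b{x}^* - \b{h}_0\b{x}_0^*\Vert_F$ through the local RIP; the constant $\omega$ emerges from balancing that choice against the penalty $G$. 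Finally, (iii) and (iv) are not purely deterministic consequences of (i) and (ii): their proofs consume further high-probability ingredients (an operator-norm bound on $\mathcal{A}$ among them), so your claimed logical reduction understates what must actually be established.
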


\section{Blind Ptychography} \label{sec: Blind Ptychography}
\subsection{Introduction}
A more recent area of study is blind ptychography, in which both the object and the mask are considered unknown, up to reasonable assumptions. The first successful recovery was given in (\cite{thibault2008high},\cite{thibault2009probe})
, further study into the sufficient overlap (\cite{bunk2008influence}, \cite{maiden2009improved}, \cite{maiden2017further}), and summarized in \cite{fannjiang2020blind}.\\
\indent Let $\mathbf{x}, \mathbf{m} \in \mathbb{C}^{d}$ denote the unknown sample and mask, respectively. We suppose that we have $d^2$ noisy ptychographic measurements of the form
\begin{align}
(\mathbf{Y})_{\ell,k} = |(\mathbf{F}(\mathbf{x} \circ S_k \mathbf{m}))_\ell|^2 + (\mathbf{N})_{\ell,k}, \quad (\ell,k) \in [d]_0 \times [d]_0,
\end{align}
where $S_k, \circ, \mathbf{F}$ denote $k^{\text{th}}$ circular shift, Hadamard product, and $d$-dimensional discrete Fourier transform, and $\mathbf{N}$ is the matrix of additive noise. By Theorem \ref{thm: BlindPtych Measurements}, we have shown we can rewrite the measurements as
\begin{align}
\bigg( \mathbf{Y}^T \mathbf{F}^{T}\bigg)_k = d \cdot (\mathbf{x} \circ S_k \bar{\mathbf{x}}) *  (\tilde{\mathbf{m}} \circ S_{-k} \bar{\tilde{\mathbf{m}}})  + \bigg( \mathbf{N}^T \mathbf{F}^{T}\bigg)_k,
\end{align}
where $*$ denotes the $d$-dimensional discrete convolution, and $\tilde{\mathbf{m}}$ denotes the reversal of $\mathbf{m}$ about its first entry. This is now a scaled blind deconvolution problem which has been studied in \cite{ahmed2013blind},\cite{li2019rapid}.
\subsection{Main Results}
\subsubsection{Recovering the Sample}
To recover the sample, we will need to assume that $\mathbf{x}$ belongs to a known subspace. Initially we solve algorithmically for the zero shift case ($k=0$) and then generalize the method to solve for the estimate which utilizes all the obtained shifts.\\
\indent Our assumptions are as follows: $\mathbf{x} \in \mathbb{C}^d$ unknown, $\mathbf{x} = C\mathbf{x}', C \in \mathbb{C}^{d \times N}, N \ll L$ known, $\mathbf{x}' \in \mathbb{C}^N$ or $\mathbb{R}^N$ unknown $\mathbf{m} \in \mathbb{C}^d$ unknown, $supp(\mathbf{m}) \subseteq [\delta]_0$, $K$ known, $\Vert\mathbf{m}\Vert_2$ known. Known noisy measurements $\mathbf{Y}$.\\
\indent Our first goal is to compute an estimate $\mathbf{x}_{est}$ of $\mathbf{x}$, true up to a global phase. We will use this estimate to then produce an estimate $\mathbf{m}_{est}$ of $\mathbf{m}$, again true up to a global phase. \\
\indent Firstly, we let $\mathbf{y}$ be the first column of $\frac{1}{\sqrt{d}} \cdot \mathbf{F}(\overline{\widetilde{(\mathbf{F}\mathbf{Y})^T}})$, $\mathbf{f} = \tilde{\mathbf{m}} \circ  \overline{\tilde{\mathbf{m}}}$ (so $\Vert\mathbf{f}\Vert_2$ known). We next set $\mathbf{g} = \mathbf{x} \circ \bar{\mathbf{x}}$ but to fully utilize the blind deconvolution algorithm, we will need a lemma concerning hadamard products of products of matrices. Firstly, we need define some products between matrices.
\begin{Definition} Let $\mathbf{A} = (A_{i,j}) \in \mathbb{C}^{m \times n}$ and $\mathbf{B} = (B_{i,j}) \in \mathbb{C}^{p \times q}$. Then the \textbf{Kronecker product} $\mathbf{A} \otimes \mathbf{B} \in \C^{mp \times nq}$ is defined by
\begin{align*}
(\mathbf{A} \otimes \mathbf{B})_{n(i-1) + k,q(j-1) + \ell} = A_{i,j}B_{k,\ell}.
\end{align*}
\end{Definition}
\begin{Definition} Let $\mathbf{A} \in \mathbb{C}^{m \times n}$ and $\mathbf{B} \in \mathbb{C}^{p \times n}$ with columns $\b{a}_i, \mathbf{b}_i$ for $i \in [n]_0$. Then the \textbf{Khatri-Rao product} $\mathbf{A} \bullet \mathbf{B} \in \mathbb{C}^{mp \times n}$ is defined by
\begin{align}
\mathbf{A} \bullet \mathbf{B} = [\mathbf{a}_0 \otimes \mathbf{b}_0 \; \mathbf{a}_1 \otimes \mathbf{b}_1 \ldots \mathbf{a}_{n-1} \otimes \mathbf{b}_{n-1}].
\end{align}
\end{Definition}
\begin{Definition} Let $\mathbf{A} \in \mathbb{C}^{m \times n}$ and $\mathbf{B} \in \mathbb{C}^{m \times p}$ be matrices with rows $\mathbf{a}_i, \mathbf{b}_i$ for $i \in [m]_0$. Then the \textbf{transposed Khatri-Rao product} (or \textbf{face-splitting product}), denoted $\odot$, is the matrix whose rows are Kronecker products of the columns of $\mathbf{A}$ and $\mathbf{B}$ i.e. the rows of $\mathbf{A} \odot \mathbf{B} \in \mathbb{C}^{m \times np}$ are given by
\begin{align*}
(\mathbf{A} \odot \mathbf{B})_i = \mathbf{a}_i \otimes \mathbf{b}_i, \quad i \in [m]_0.
\end{align*}
\end{Definition}
We then utilize the following lemma concerning the transposed Khatri-Rao product.
\begin{Lemma}[Theorem 1, \cite{slyusar1999family}] \label{lem: transkrproductlemma}
Let $\mathbf{A} \in \mathbb{C}^{m \times n}, \mathbf{B} \in \mathbb{C}^{n \times p}, \mathbf{C} \in \mathbb{C}^{m \times q},\mathbf{D} \in \mathbb{C}^{q \times p}$. Then we have that
\begin{align*}
(\mathbf{A}\mathbf{B}) \circ (\mathbf{C}\mathbf{D}) = (\mathbf{A} \bullet \mathbf{C})(\mathbf{B} \odot \mathbf{D}),
\end{align*}
where $\circ$ is the Hadamard product, $\bullet$ is the standard Khatri-Rao product, and $\odot$ is the transposed Khatri-Rao product.
\end{Lemma}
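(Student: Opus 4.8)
The plan is to prove the identity by a direct entrywise comparison, since both sides are $m \times p$ matrices and the claim is simply that all their entries agree. First I would record a dimension sanity check to fix the structure: $\mathbf{A}\mathbf{B}$ and $\mathbf{C}\mathbf{D}$ are both $m \times p$, so the Hadamard product on the left is well-defined and $m \times p$; on the right the two Kronecker-type products combine an $m \times nq$ matrix with an $nq \times p$ matrix, again giving $m \times p$. With the shapes reconciled it remains only to match entries. Fixing a row index $i$ and column index $j$ and expanding matrix multiplication followed by the Hadamard product gives
\begin{align*}
\big[(\mathbf{A}\mathbf{B}) \circ (\mathbf{C}\mathbf{D})\big]_{i,j} = \left(\sum_{k=1}^{n} A_{i,k} B_{k,j}\right)\left(\sum_{l=1}^{q} C_{i,l} D_{l,j}\right) = \sum_{k=1}^{n}\sum_{l=1}^{q} A_{i,k} C_{i,l}\, B_{k,j} D_{l,j},
\end{align*}
so the whole problem reduces to showing the right-hand side reproduces exactly this double sum.

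For the right-hand side I would unpack the two factors through the rowwise/columnwise Kronecker structure that defines them. The $i$-th row of the first factor is the Kronecker product of the $i$-th rows of $\mathbf{A}$ and $\mathbf{C}$, whose entry at the flattened position indexed by the pair $(k,l)$ is $A_{i,k} C_{i,l}$; symmetrically, the $j$-th column of the second factor is the Kronecker product of the $j$-th columns of $\mathbf{B}$ and $\mathbf{D}$, with entry $B_{k,j} D_{l,j}$ at position $(k,l)$. The $(i,j)$ entry of the matrix product is then the inner product of this row against this column, namely
\begin{align*}
\big[(\mathbf{A}\bullet\mathbf{C})(\mathbf{B}\odot\mathbf{D})\big]_{i,j} = \sum_{k=1}^{n}\sum_{l=1}^{q} A_{i,k} C_{i,l}\, B_{k,j} D_{l,j},
\end{align*}
the sum ranging over all $nq$ flattened indices. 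Comparing with the previous display closes the argument.

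The main obstacle, and really the only delicate point, is index bookkeeping: I must confirm that the single linear index enumerating the $nq$ columns of $\mathbf{A}\bullet\mathbf{C}$ corresponds to the same ordered pair $(k,l)$ as the index enumerating the $nq$ rows of $\mathbf{B}\odot\mathbf{D}$. This is exactly why the identity holds—both Kronecker flattenings use the identical convention (writing the combined index as $q(k-1)+l$, fixing $k$ and running over $l$), so the row and column entries pair up termwise and their inner product collapses to $\sum_{k,l}$ rather than to some scrambled sum. I would make this explicit by substituting the flattened index $q(k-1)+l$ into both factors and verifying the entries align, after which equality of the two double sums is immediate and the lemma follows.
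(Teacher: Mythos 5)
Your proof is correct, and it is essentially the only proof available for comparison: the paper does not prove this lemma at all, it simply cites Theorem 1 of Slyusar's paper. The argument you give — expand both sides at a fixed entry $(i,j)$, turn the product of two sums into a double sum $\sum_{k,l} A_{i,k} C_{i,l}\, B_{k,j} D_{l,j}$, and observe that the inner product of a row-wise Kronecker flattening against a column-wise Kronecker flattening reproduces exactly this double sum because both use the same flattened index $q(k-1)+l$ — is the standard proof of this identity, and you correctly isolate the index-pairing consistency as the one point that needs care.

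One thing you should flag rather than silently repair: as printed, the lemma is inconsistent with the paper's own definitions, and your dimension check is precisely the observation that exposes this. Under the paper's conventions, $\bullet$ (Khatri-Rao) combines two matrices with the same number of \emph{columns} column-by-column, while $\odot$ (transposed Khatri-Rao) combines two matrices with the same number of \emph{rows} row-by-row. For the stated dimensions $\mathbf{A}\in\mathbb{C}^{m\times n}$, $\mathbf{C}\in\mathbb{C}^{m\times q}$, the product $\mathbf{A}\bullet\mathbf{C}$ is therefore undefined unless $n=q$, and even then it is $m^2\times n$ rather than the $m\times nq$ your sanity check requires; similarly $\mathbf{B}\odot\mathbf{D}$ is undefined for $\mathbf{B}\in\mathbb{C}^{n\times p}$, $\mathbf{D}\in\mathbb{C}^{q\times p}$. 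What you actually proved — reading the first factor as the row-wise (face-splitting) product and the second as the column-wise Khatri-Rao product — is the corrected identity
\begin{align*}
(\mathbf{A}\mathbf{B}) \circ (\mathbf{C}\mathbf{D}) = (\mathbf{A} \odot \mathbf{C})(\mathbf{B} \bullet \mathbf{D}),
\end{align*}
which is the true statement of Slyusar's theorem; the lemma as written in the paper has the two symbols transposed relative to its own definitions. Your proof is complete and correct for this corrected form, but a careful write-up should state the correction explicitly rather than letting the reader reconcile the mismatch, especially since the same swapped usage propagates into the paper's algorithms, where $\mathbf{C}\bullet\bar{\mathbf{C}}$ is used to denote the row-wise product and $\mathbf{x}'\odot\bar{\mathbf{x}}'$ the column-wise one.
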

Thus by Lemma \ref{lem: transkrproductlemma} we have that for $\mathbf{g} = \mathbf{x} \circ \bar{\mathbf{x}} = \mathbf{C}\mathbf{x}' \circ \bar{\mathbf{C}}\bar{\mathbf{x}'}$. Then $g = \mathbf{C}'\mathbf{x}''$ where $\mathbf{C}' \in \mathbb{C}^{L \times N^2}, \mathbf{x}'' \in \mathbb{C}^{N^2}$ are given by
\begin{align*}
\mathbf{C}' = \mathbf{C} \bullet \bar{\mathbf{C}}, \quad \mathbf{x}'' = \mathbf{x}' \odot \bar{\mathbf{x}}'.
\end{align*}
We now compute RRR Blind Deconvolution (Algorithm \ref{alg: FullBlindDeconvolution}) with $\mathbf{y}, \mathbf{f}, \mathbf{g}, \mathbf{C}, K = \delta$ as above ($\mathbf{B}$ last K columns of DFT matrix) to obtain estimate for $\mathbf{x}' \odot \bar{\mathbf{x}}'$. Use angular synchronisation to solve for $\mathbf{x}'$, and thus solve for $\mathbf{x}$.
\begin{algorithm}[H]
\caption{Blind Ptychography (Zero Shift)} \label{Alg: BP ZS}
\begin{algorithmic}
\Require \\
1) $\mathbf{x} \in \mathbb{C}^d$ unknown, $\mathbf{x} = C\mathbf{x}', C \in \mathbb{C}^{d \times N}, N \ll L$ known, $\mathbf{x}' \in \mathbb{C}^N$ or $\mathbb{R}^N$ unknown.\\
2) $\mathbf{m} \in \mathbb{C}^d$ unknown, $supp(\mathbf{m}) \subseteq [\delta]_0$, $K$ known, $\Vert\mathbf{m}\Vert_2$ known
Known noisy measurements $\mathbf{Y}$.\\
3) Known noisy measurements $\mathbf{Y}$.
\Ensure Estimate $\mathbf{x}_{est}$ of $\mathbf{x}$ true up to a global phase.
\State 1) Let $\mathbf{y}$ be the first column of $\frac{1}{\sqrt{d}} \cdot \mathbf{F}(\overline{\widetilde{(\mathbf{F}\mathbf{Y})^T}})$, $\mathbf{f} = \tilde{\mathbf{m}} \circ  \overline{\tilde{\mathbf{m}}}$ (so $\Vert\mathbf{f}\Vert_2$ known)
\State 2) Let $\mathbf{g} = \mathbf{x} \circ \bar{\mathbf{x}} = \mathbf{C}\mathbf{x}' \circ \bar{\mathbf{C}}\bar{\mathbf{x}'}$. Then $g = \mathbf{C}'\mathbf{x}''$ where $\mathbf{C}' \in \mathbb{C}^{d \times N^2}, \mathbf{x}'' \in \mathbb{C}^{N^2}$ are given by
\vspace{-2mm}
\begin{align*}
\mathbf{C}' = \mathbf{C} \bullet \bar{\mathbf{C}}, \quad \mathbf{x}'' = \mathbf{x}' \odot \bar{\mathbf{x}}'.
\end{align*}
\vspace{-4mm}
\State 3) Compute RRR Blind Deconvolution (Algorithm 1 \& 2, \cite{li2019rapid}) with $\mathbf{y}, \mathbf{f}, \mathbf{g}, \mathbf{C}, K = \delta$ as above ($\mathbf{B}$ last K columns of DFT matrix) to obtain estimate for $\mathbf{x}' \odot \bar{\mathbf{x}}'$.
\State 4) Use angular synchronisation to solve for $\mathbf{x}'$, and thus compute $\mathbf{x}_{est}$.
\end{algorithmic}
\end{algorithm}

\subsubsection{Recovering the Mask}
Once the estimate of $\mathbf{x}$ has been found, denoted $\mathbf{x}_{est}$, we use this estimate to find $\mathbf{m}_{est}$. We first compute $\mathbf{g}_{est} = \mathbf{x}_{est} \circ \overline{\mathbf{x}}_{est}$, and then we use point-wise division to find
\begin{align}
\mathbf{F}(\tilde{\mathbf{m}} \circ \bar{\tilde{\mathbf{m}}}) = \dfrac{\overline{\mathbf{F}^{-1}(\widetilde{(\mathbf{F}\mathbf{Y})^T})}}{\mathbf{F}(\mathbf{x}_{est} \circ \overline{\mathbf{x}}_{est})}.
\end{align}
Then use an inverse Fourier transform, a reversal and then angular synchronization, similar to obtaining $\mathbf{x}_{est}$.

\begin{algorithm}[H]
\caption{Recovering The Mask} \label{Alg: Recovering The Mask}
\begin{algorithmic}
\Require
1)  $\mathbf{x}_{est}$ generated by Algorithm \ref{Alg: BP ZS}.\\
2) Known noisy measurements $\mathbf{Y}$.\\
3) $supp(\mathbf{m}) \subseteq [\delta]_0$, $K$ known, $\Vert\mathbf{m}\Vert_2$ known.
\Ensure Estimate $\mathbf{m}_{est}$ of $\mathbf{m}$ true up to a global phase.
\State 1) Compute $\mathbf{g}_{est} = \mathbf{x}_{est} \circ \overline{\mathbf{x}}_{est}$ and $2\delta-1$ perform point-wise divisions to obtain 
\begin{align}
\mathbf{F}(\tilde{\mathbf{m}}_{est} \circ S_{-k}\bar{\tilde{\mathbf{m}}}_{est}) = \dfrac{\overline{\mathbf{F}^{-1}(\widetilde{(\mathbf{F}\mathbf{Y})^T})_k}}{\mathbf{F}(\mathbf{x}_{est} \circ S_k\overline{\mathbf{x}}_{est})}.
\end{align}
\State 2) Compute inverse Fourier transform to obtain $\tilde{\mathbf{m}}_{est} \circ S_{-k}\bar{\tilde{\mathbf{m}}}_{est}$ and use these to form the diagonals of a banded matrix.
\State 3) Use angular synchronisation to solve for $\tilde{\mathbf{m}}_{est}$, and thus perform a reversal to compute $\mathbf{m}_{est}$.
\State 4) Let $\alpha = \dfrac{\Vert \mathbf{m}_{est} \Vert{}_2}{\Vert \mathbf{m} \Vert{}_2}$. Finally, let $\mathbf{x}_{est} = \alpha\mathbf{x}_{est}, \mathbf{m}_{est} = \alpha^{-1}\mathbf{m}_{est}$
\end{algorithmic}
\end{algorithm}

\subsubsection{Multiple Shifts}
To generalize the setup, we let $\mathbf{y}_{(k)}$ denote the $k^{\text{th}}$ column of $\frac{1}{\sqrt{d}} \cdot \mathbf{F}(\overline{\widetilde{(\mathbf{F}\mathbf{Y})^T}})$, $\mathbf{f}_{(k)} = \tilde{\mathbf{m}} \circ  S_{-k}\overline{\tilde{\mathbf{m}}}$. Let $\mathbf{g}_{(k)} = \mathbf{x} \circ S_k\bar{\mathbf{x}} = \mathbf{C}\mathbf{x}' \circ S_k\bar{\mathbf{C}}\bar{\mathbf{x}'}$. Then again by another application of Lemma \ref{lem: transkrproductlemma}, $\mathbf{g}_{(k)} = \mathbf{C}_{(k)}'\mathbf{x}''$ where $\mathbf{C}' \in \mathbb{C}^{d \times N^2}, \mathbf{x}'' \in \mathbb{C}^{N^2}$ are given by
\begin{align*}
\mathbf{C}_{(k)}' = \mathbf{C} \; \bullet \; S_k\bar{\mathbf{C}}, \quad 0 \leq k \leq K, d-K+1 \leq k \leq d, \quad \mathbf{x}'' = \mathbf{x}' \odot \bar{\mathbf{x}}' = vec(\mathbf{x}'(\mathbf{x}')^*).
\end{align*}
We then perform $2\delta-1$ blind deconvolutions to obtain $2\delta - 1$ estimates of $\mathbf{x}$ and $\mathbf{m}$, labelled $\mathbf{x}_{est}^{i}, \mathbf{m}_{est}^{j}$ respectively for $i,j \in [2\delta-1]_0$.\\
\indent Ideally, we would want to select the estimates which generates the minimum error for each $\mathbf{x}$ and $\mathbf{m}$ but that implies prior knowledge of $\mathbf{x}$ and $\mathbf{m}$. Instead, compute $(2\delta-1)^2$ estimates of the Fourier measurements by
\begin{align}
(\mathbf{Y}_{est}^{i,j})_{\ell,k} = |(\mathbf{F}(\mathbf{x}_{est}^{i} \circ S_k \mathbf{m}_{est}^{j}))_{\ell}|^2, \quad i,j \in [2\delta-1]_0.
\end{align}
We then compute the associated error
\begin{align}
(i',j') = \underset{(i,j)}{\text{argmin}} \dfrac{\Vert\mathbf{Y}_{est}^{i,j} - \mathbf{Y}\Vert_{F}^2}{\Vert\mathbf{Y}\Vert_{F}^2}, \quad i,j \in [2\delta-1]_0.
\end{align}
Then let $\mathbf{x}_{est} = \mathbf{x}_{est}^{i'}, \mathbf{m}_{est} = \mathbf{m}_{est}^{j'}$. 

\begin{algorithm}[H]
\caption{Blind Ptychography (Multiple Shifts)} \label{Alg: BP MS}
\begin{algorithmic}
\Require \\
1) $\mathbf{x} \in \mathbb{C}^d$ unknown, $\mathbf{x} = C\mathbf{x}', C \in \mathbb{C}^{d \times N}, N \ll d$ known, $\mathbf{x}' \in \mathbb{C}^N$ or $\mathbb{R}^N$ unknown.\\
2) $\mathbf{m} \in \mathbb{C}^d$ unknown, $supp(\mathbf{m}) \subseteq [\delta]_0$, $K$ known, $\Vert\mathbf{m}\Vert_2$.\\
3) Known noisy measurements $\mathbf{Y}$.
\Ensure Estimate $\mathbf{x}_{est}$ of $\mathbf{x}$, true up to a global phase
\State 1) Let $\mathbf{y}_{(k)}$ denote the $k^{\text{th}}$ column of $\frac{1}{\sqrt{d}} \cdot \mathbf{F}(\overline{\widetilde{(\mathbf{F}\mathbf{Y})^T}})$, $\mathbf{f}_{(k)} = \tilde{\mathbf{m}} \circ  S_{-k}\overline{\tilde{\mathbf{m}}}$ (so $\Vert\mathbf{f}_{(k)}\Vert_2$ known).
\State 2) Let $\mathbf{g}_{(k)} = \mathbf{x} \circ S_k\bar{\mathbf{x}} = \mathbf{C}\mathbf{x}' \circ S_k\bar{\mathbf{C}}\bar{\mathbf{x}'}$. Then $g_{(k)} = \mathbf{C}'\mathbf{x}''$ where $\mathbf{C}' \in \mathbb{C}^{d \times N^2}, \mathbf{x}'' \in \mathbb{C}^{N^2}$ are given by
\begin{align*}
\mathbf{C}_{(k)}' = C \; \bullet \; S_k\bar{C}, \quad 0 \leq k \leq K, d-K+1 \leq K \leq d, \quad \mathbf{x}'' = \mathbf{x}' \odot \bar{\mathbf{x}}'.
\end{align*}
\State 3) Perform $2\delta-1$ RRR Blind Deconvolutions (Algorithm 1 \& 2, \cite{li2019rapid}) with $\mathbf{y}_{(k)}, \mathbf{f}_{(k)}, \mathbf{g}_{(k)}, \mathbf{C}$, as above to obtain $2\delta-1$ estimates for $\mathbf{x}' \odot \bar{\mathbf{x}}'$.
\State 4) Use angular synchronisation to solve for $2\delta-1$ estimates $\mathbf{x}'_{est}$, and thus solve for $2\delta-1$ estimates $\mathbf{x}_{est}^i = \mathbf{C}\mathbf{x}'_{est}, i \in [2\delta-1]_0$.
\State 5) Use these estimates $\mathbf{x}_{est}^{i}$ to compute $2\delta -1$ estimates $\mathbf{m}_{est}^{j}, j \in [2\delta-1]_0$.
\State 6) Let $\alpha^i = \dfrac{\Vert \mathbf{m}_{est}^i \Vert{}_2}{\Vert \mathbf{m} \Vert{}_2}$, and for $i \in [2\delta-1]_0$, let $\mathbf{x}_{est}^i = \alpha^i\mathbf{x}_{est}^i, \mathbf{m}_{est}^i = \dfrac{1}{\alpha^i}\mathbf{m}_{est}^i$
\State 7) Compute $(2\delta-1)^2$ estimates of the Fourier measurements by
\begin{align}
(\mathbf{Y}_{est}^{i,j})_{\ell,k} = |(\mathbf{F}(\mathbf{x}_{est}^{i} \circ S_k \mathbf{m}_{est}^{j}))_{\ell}|^2, \quad i,j \in [2\delta-1]_0.
\end{align}
We then compute the associated error $(i',j') = \underset{(i,j)}{\text{argmin}} \dfrac{\Vert\mathbf{Y}_{est}^{i,j} - \mathbf{Y}\Vert_{F}^2}{\Vert\mathbf{Y}\Vert_{F}^2}, \quad i,j \in [2\delta-1]_0$.
\State 8) Let $\mathbf{x}_{est} = \mathbf{x}_{est}^{i'}, \mathbf{m}_{est} = \mathbf{m}_{est}^{j'}$.
\end{algorithmic}
\end{algorithm}

\subsection{Numerical Simulations}
All simulations were performed using MATLAB R2021b on an Intel desktop with a 2.60GHz i7-10750H CPU and 16GB DDR4 2933MHz memory. All code used to generate the figures below is publicly available at https://github.com/MarkPhilipRoach/BlindPtychography.\\
\indent To be more precise, we have defined the immeasurable (in practice since $\mathbf{x}$ and $\mathbf{m}$ are both unknown) estimates
\begin{align*}
\text{Max Shift}^{(x)} &= \underset{\mathbf{x}_{est}^{i}}{\text{argmax}} \Vert\mathbf{x} - \mathbf{x}_{est}^{i}\Vert_{2}^{2},&\\
\text{Max Shift}^{(m)} &= \underset{\mathbf{m}_{est}^{j}}{\text{argmax}} \Vert\mathbf{m} - \mathbf{m}_{est}^{j}\Vert_{2}^{2}, \quad i,j \in [2\delta-1]_0,&\\
\text{Min Shift}^{(x)} &= \underset{\mathbf{x}_{est}^{i}}{\text{argmin}} \Vert\mathbf{x} - \mathbf{x}_{est}^{i}\Vert_{2}^{2},&\\ 
\text{Min Shift}^{(m)} &= \underset{\mathbf{m}_{est}^{j}}{\text{argmin}} \Vert\mathbf{m} - \mathbf{m}_{est}^{j}\Vert_{2}^{2}, \quad i,j \in [2\delta-1]_0.&
\end{align*}
and the measurable estimates. First, $\text{No Shift}^{(x)}$ and $\text{No Shift}^{(m)}$ refer to the zero shift estimates outlined in Algorithm \ref{Alg: BP ZS}. Secondly, the estimates achieved in Algorithm \ref{Alg: BP MS}
\begin{align*}
&(\text{Argmin Shift}^{(x)}, \text{Argmin Shift}^{(m)}) = (\mathbf{x}^{i'}, \mathbf{m}^{j'}),&\\
&(i',j') =  \underset{(i,j)}{\text{argmin}} \dfrac{\Vert\mathbf{Y}_{est}^{i,j} - \mathbf{Y}\Vert_{F}^2}{\Vert\mathbf{Y}\Vert_{F}^2}, \quad i,j \in [2\delta-1]_0.&
\end{align*}

\begin{figure}[H]
\centering
\includegraphics[width=1\textwidth]{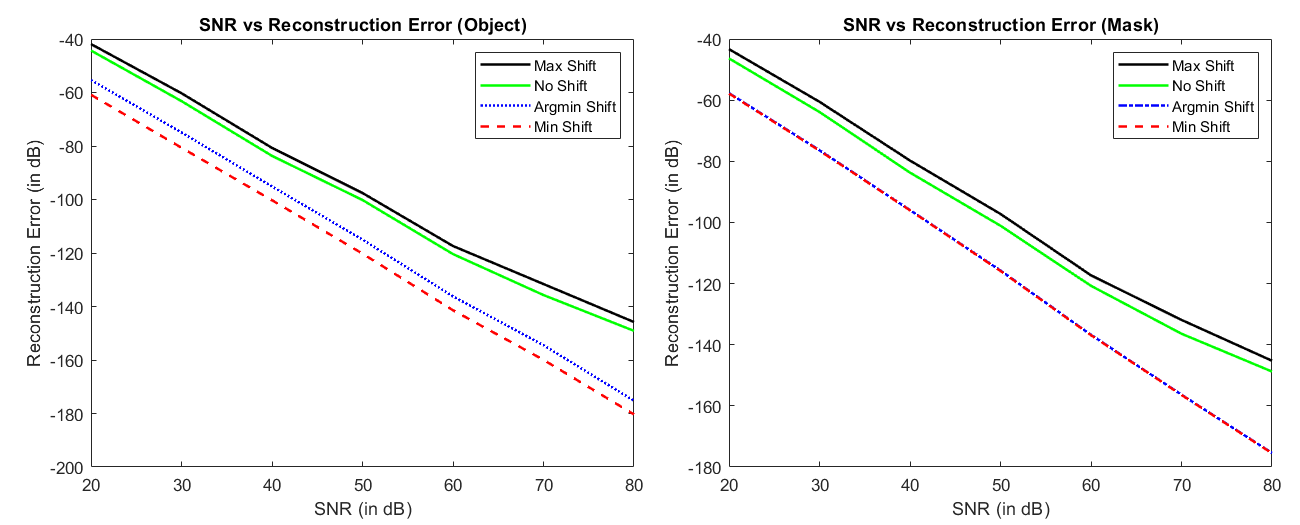}
\caption{$d = 2^6, K = \delta = \log_2 d, N = 4, \mathbf{C}$ complex Gaussian. Max shift refers to the maximum error achieved from a blind deconvolution of a particular shift. Min shift refers to the maximum error achieved from a blind deconvolution of a particular shift. Argmin Shift refers to the choice of object and mask chosen in Step 6 of Algorithm \ref{Alg: BP MS}. Averaged over 100 simulations. 1000 iterations. }
\label{fig: SNRvsErrorN=4}
\end{figure}
Figure \ref{fig: SNRvsErrorN=4} demonstrates robust recovery under noise. It also demonstrates the impact of performing the $2\delta-1$ blind deconvolutions and taking the Argmin Shift, versus simply taking the non-shifted object and mask. It also demonstrates how closely the reconstructions error from Argmin Shift and Min Shift are, in particular for the mask. Figure \ref{fig: SNRvsErrorN=6} demonstrates the impact even more, showing that with a higher value for the known subspace, the more accurate the Argmin Shift and Min Shift are, as well as demonstrating the large difference between the Max Shift and Min Shift.
\begin{figure}[H]
\centering
\includegraphics[width=1\textwidth]{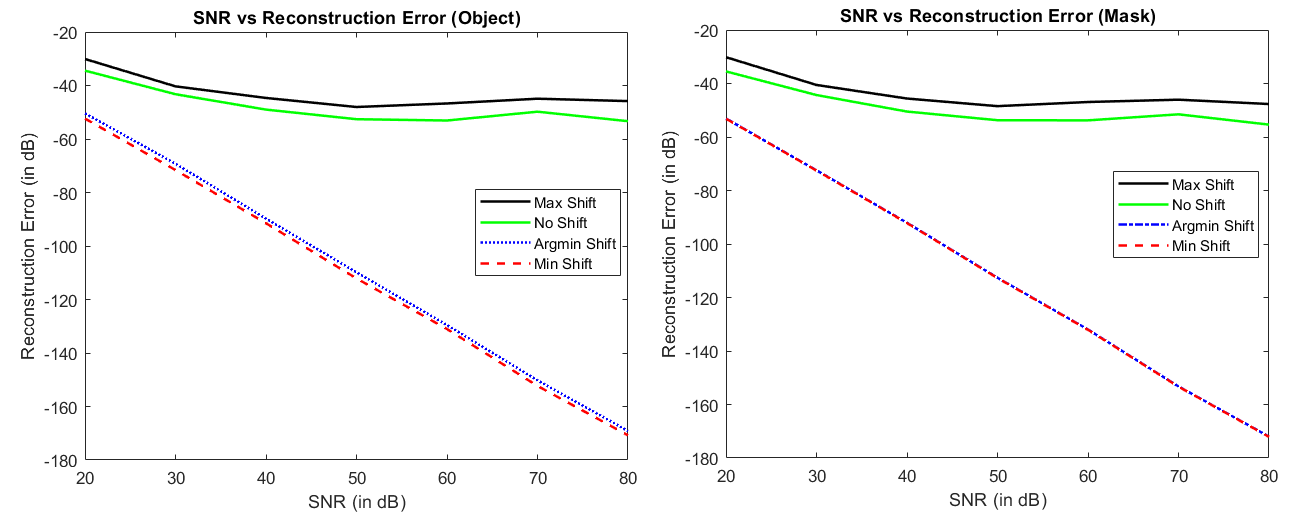}
\caption{$d = 2^6, K = \delta = \log_2 d, N = 6, \mathbf{C}$ complex Gaussian. Max shift refers to the maximum error achieved from a blind deconvolution of a particular shift. Min shift refers to the maximum error achieved from a blind deconvolution of a particular shift. Argmin Shift refers to the choice of object and mask chosen in Step 6 of Algorithm \ref{Alg: BP MS}. Averaged over 100 simulations. 1000 iterations. }
\label{fig: SNRvsErrorN=6}
\end{figure}
The following figures demonstrate recovery against additional noise, with varying $\delta$ and $N$.
\begin{figure}[H]
\centering
\includegraphics[width=1\textwidth]{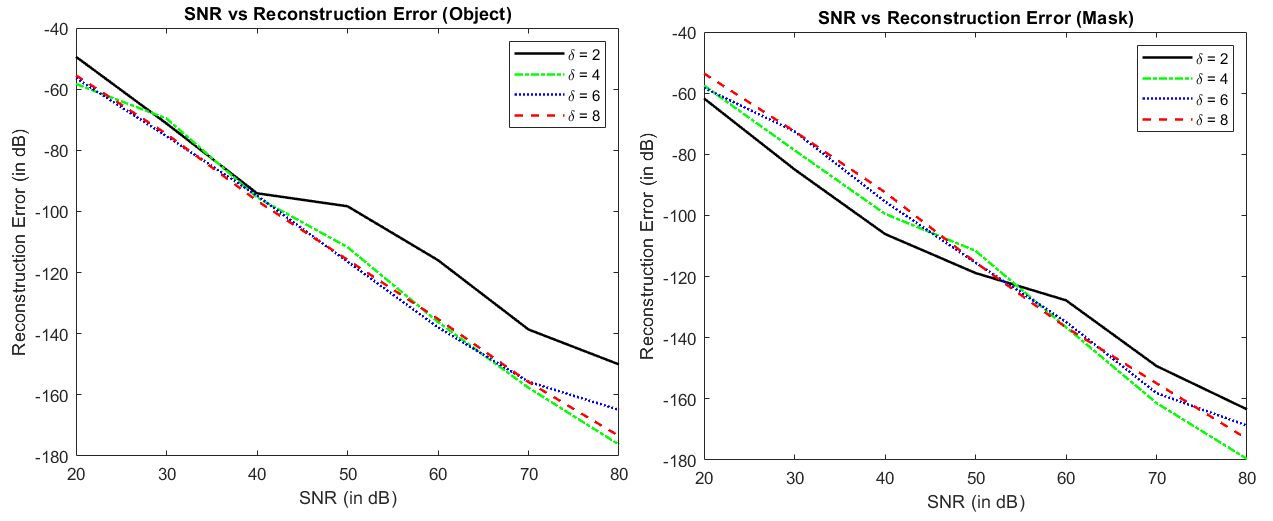}
\caption{$d = 2^6, N = 4, \mathbf{C}$ complex Gaussian. Application of Algorithm \ref{Alg: BP MS} with varying $K = \delta$.}
\label{fig: SNRvsErrorVaryingDelta}
\end{figure}
\begin{figure}[H]
\centering
\includegraphics[width=1\textwidth]{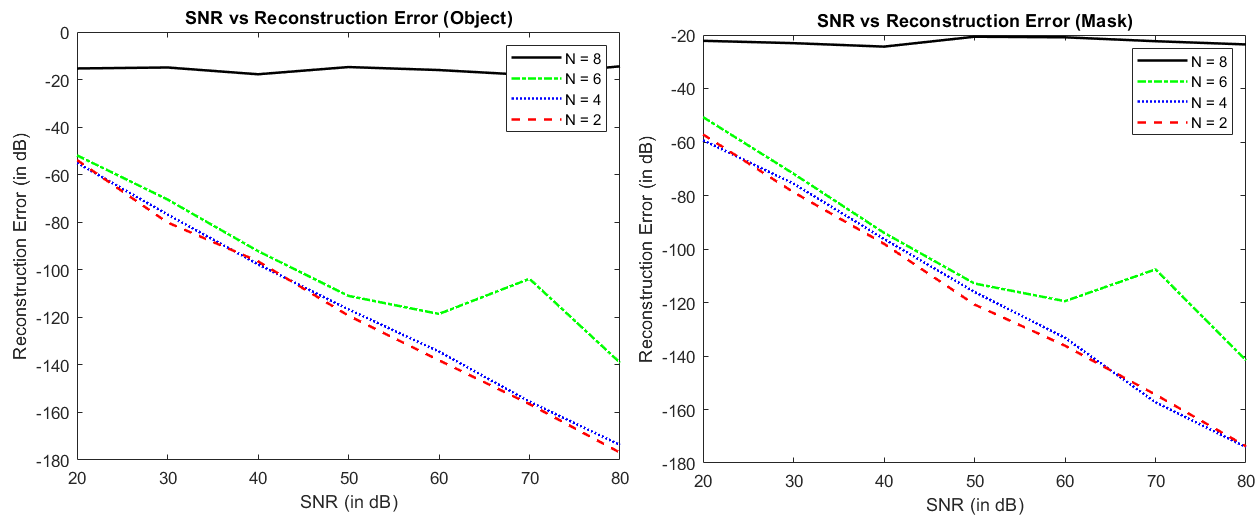}
\caption{$d = 2^6, K = \delta = 6, \mathbf{C}$ complex Gaussian. Application of Algorithm \ref{Alg: BP MS} with varying $N$.}
\label{fig: SNRvsErrorVaryingN}
\end{figure}
Next, we consider the frequency of the chosen index from performing the argmin function (step 6 of Algorithm \ref{Alg: BP MS}) compared to the true minimizing indices for the object and mask separately.\\
\indent Firstly, we have the frequency of the argmin indices.\begin{figure}[H]
\centering
\includegraphics[width=1\textwidth]{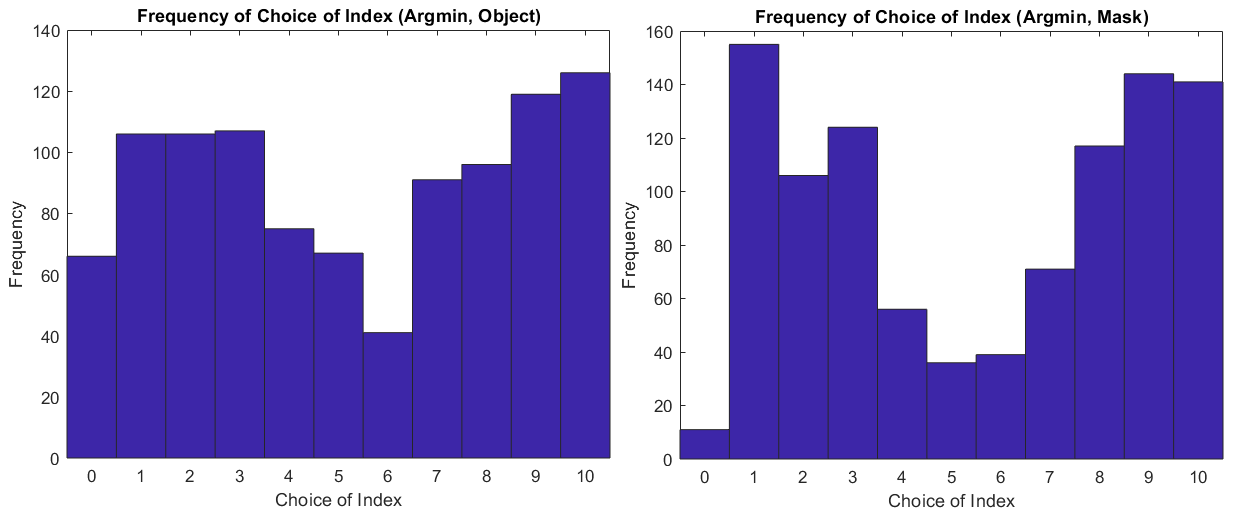}
\caption{$d = 2^6, \delta = 6, N = 4, \mathbf{C}$ complex Gaussian. 1000 simulations. Frequency of index being chosen to compute $\text{Argmin Shift}^{(x)}$ and $\text{Argmin Shift}^{(m)}$.}
\label{fig: FrequencyArgmin}
\end{figure}
Secondly, we have the frequency of the min shift for both of the object and mask. Both of Figure \ref{fig: FrequencyArgmin} and Figure \ref{fig: FrequencyMin} were computed on the same 1000 tests.
\begin{figure}[H]
\centering
\includegraphics[width=1\textwidth]{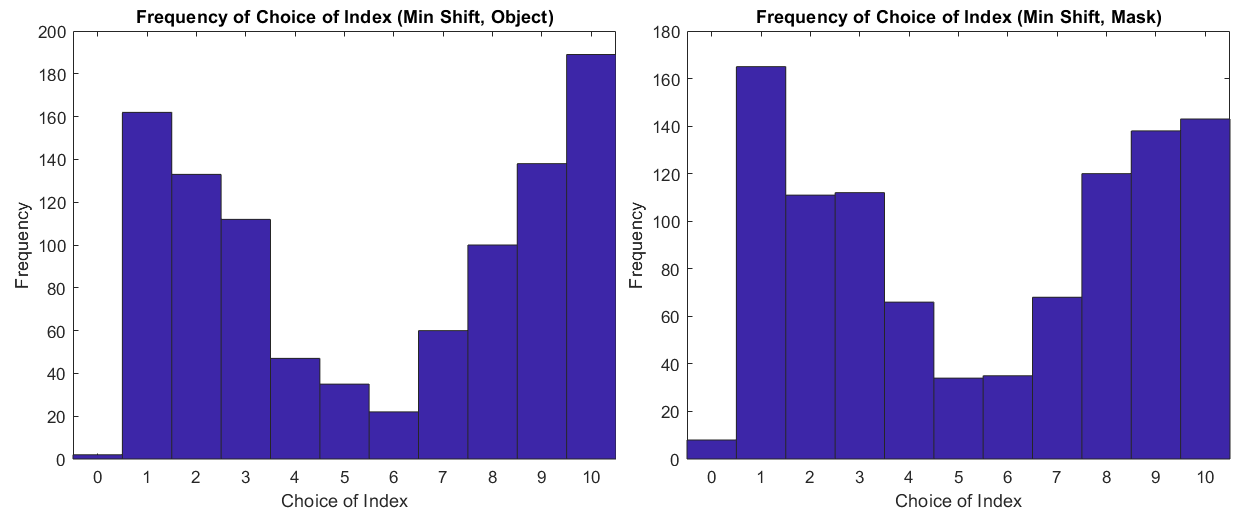}
\caption{$d = 2^6, \delta = 6, N = 4, \mathbf{C}$ complex Gaussian. 1000 simulations. Frequency of index being chosen to compute $\text{Min Shift}^{(x)}$ and $\text{Min Shift}^{(m)}$.}
\label{fig: FrequencyMin}
\end{figure}
Finally, we plot these choice of indices for both the Argmin Shift and Min Shift onto a two dimensional plot. 
\begin{figure}[H]
\centering
\includegraphics[width=1\textwidth]{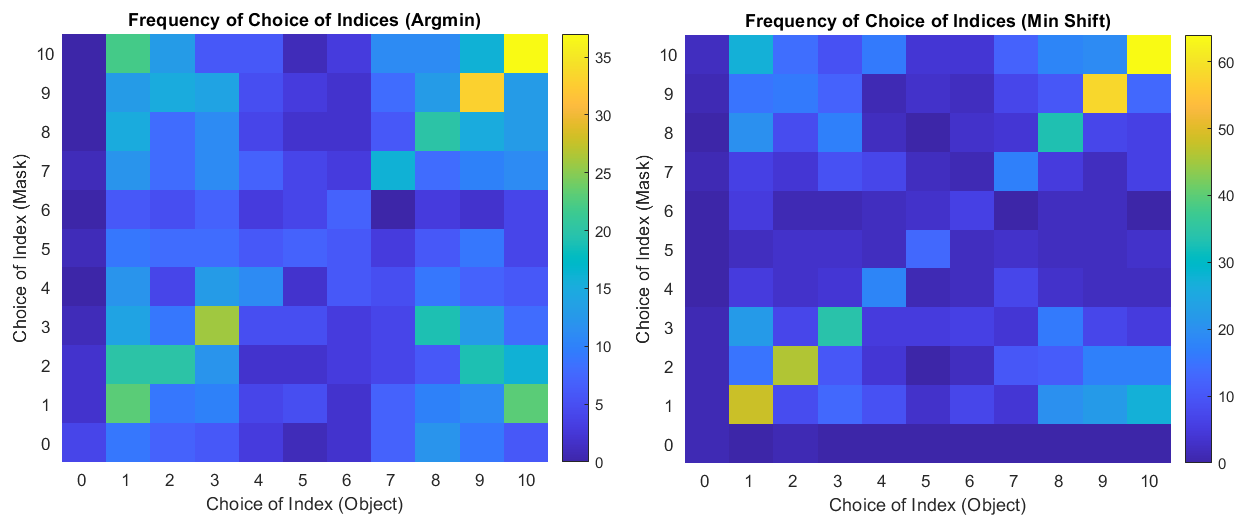}
\caption{$d = 2^6, \delta = 6, N = 4, \mathbf{C}$ complex Gaussian. 1000 simulations. Frequency of indices being chosen to compute $(\text{Argmin Shift}^{(x)}, \text{Argmin Shift}^{(m)})$ and $(\text{Min Shift}^{(x)}, \text{Min Shift}^{(m)})$.}
\label{fig: FrequencyIndices}
\end{figure}

\section{Conclusions and Future Work}
\label{sec: BP Conclusion}

We have introduced an algorithm for recovering a specimen of interest from blind far-field ptychographic measurements. This algorithm relies on reformulating the measurements so that they resemble widely-studied blind deconvolutional measurements. This leads to transposed Khatri-Rao product estimates of our specimen which are then able to be recovered by angular synchronization. We then use these estimates in applying inverse Fourier transforms, point-wise division, and angular synchronization to recover estimates for the mask. Finally, we use a best error estimate sorting algorithm to find the final estimate of both the specimen and mask. As shown in numerical results, Algorithm \ref{Alg: BP MS} recovers both the sample and mask within a good margin of error. It also provides stability under noise. A further goal for this research would be to adapt the existing recovery guarantee theorems for the selected blind deconvolutional recovery algorithm, in which the assumed Gaussian matrix $\mathbf{C}$ is replaced with Khatri-Rao matrix $\mathbf{C}_{(k)}' = \mathbf{C} \; \bullet \; S_k\bar{\mathbf{C}}$. In particular, this would mean providing alternate inequalities for the four key conditions laid out in Theorem \ref{thm: Four Key Conditions}.


\begin{appendices}
\newpage
\section{Sub-Sampling}\label{BP Appendix}
In this section, we discuss sub-sampling lemmas that can be used in conjunction with Algorithm
\ref{alg: WDD Ptych}.
In many cases, an illumination of the sample can cause damage to the sample, and applying the illumination beam (which can be highly irradiative) repeatedly at a single point can destroy it. Considering the risks to the sample and the costs of operating the measurement equipment, there are strong incentives to reduce the number of illuminations applied to any object.
\begin{Definition} Let $s \in \mathbb{N}$ such that $s \mid d$. We define the \textbf{sub-sampling operator} $Z_s : \mathbb{C}^{d} \longrightarrow \mathbb{C}^{\frac{d}{s}}$ defined component-wise via
\begin{align}
(Z_s \b{x})_n := x_{n \cdot s}, \quad \forall n \in [d/s]_0
\end{align}
\end{Definition}
We now have an aliasing lemma which allows us to see the impact of performing the Fourier transform on a sub-sampled specimen.
\begin{Lemma} \textbf{(Aliasing)} (\cite{merhi2019phase}, Lemma 2.0.1.) Let $s \in \mathbb{N}$ such that $s \mid d$, $\b{x} \in \mathbb{C}^{d}, \omega \in [\frac{d}{s}]_0$. Then we have that
\begin{align}
\bigg(F_{\frac{d}{s}} (Z_s \b{x})\bigg)_\omega = \dfrac{1}{s} \sum_{r = 0}^{s - 1} \hat{\b{x}}_{\omega - r \frac{d}{s}} 
\end{align}
\end{Lemma}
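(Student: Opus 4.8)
The plan is to verify the identity by a direct computation, working from the right-hand aliased sum back to the subsampled transform on the left. First I would unfold the definition of the length-$d$ DFT in each summand, writing $\hat{\b{x}}_{\omega - rd/s} = \sum_{m=0}^{d-1} x_m e^{-2\pi i m(\omega - rd/s)/d}$, and then interchange the two finite sums and factor out the part of the exponential that is independent of $r$, giving
\begin{align*}
\frac{1}{s}\sum_{r=0}^{s-1}\hat{\b{x}}_{\omega - rd/s} = \frac{1}{s}\sum_{m=0}^{d-1} x_m\, e^{-2\pi i m\omega/d}\sum_{r=0}^{s-1} e^{2\pi i mr/s}.
\end{align*}

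The crux of the argument is the inner geometric sum $\sum_{r=0}^{s-1} e^{2\pi i mr/s}$. By the orthogonality of the $s$-th roots of unity this equals $s$ when $s \mid m$ and $0$ otherwise, so it acts as a sieve that annihilates every index $m$ not divisible by $s$. Substituting $m = ns$ for the surviving terms $n = 0, 1, \ldots, d/s - 1$ and cancelling the factor $s$ against the $\frac{1}{s}$ out front leaves $\sum_{n=0}^{d/s-1} x_{ns}\, e^{-2\pi i ns\omega/d}$.

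Finally I would recognize this as the claimed left-hand side: since $e^{-2\pi i ns\omega/d} = e^{-2\pi i n\omega/(d/s)}$ and $(Z_s\b{x})_n = x_{ns}$ by definition of the sub-sampling operator, the expression is exactly $\big(F_{d/s}(Z_s\b{x})\big)_\omega$ by the definition of the length-$(d/s)$ DFT, completing the proof.

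I do not expect a genuine obstacle here, since the lemma is a clean finite computation rather than an analytic estimate; the interchange of the two finite summations is automatic. The only points requiring care are book-keeping: matching the DFT normalisation convention of the paper (so that $\hat{\b{x}}_k = \sum_m x_m e^{-2\pi i mk/d}$), evaluating the root-of-unity sum correctly including the boundary case $m=0$, and tracking the $\frac{1}{s}$ factor so that it cancels cleanly against the $s$ produced by the geometric sum. That single cancellation is the one place a stray constant could creep in, so I would carry it explicitly throughout.
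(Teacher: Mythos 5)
Your proof is correct: under the paper's unnormalized DFT convention, expanding each $\hat{\mathbf{x}}_{\omega - rd/s}$, swapping the two finite sums, and applying the roots-of-unity sieve $\sum_{r=0}^{s-1} e^{2\pi i mr/s} = s\cdot\mathbbm{1}[s\mid m]$ gives exactly $\sum_{n=0}^{d/s-1} x_{ns} e^{-2\pi i n\omega/(d/s)}$, with the $\tfrac{1}{s}$ cancelling as you say. This is essentially the paper's own argument run in the opposite direction: the paper starts from $(F_{d/s}(Z_s\mathbf{x}))_\omega$, substitutes the inverse DFT for $x_{ns}$, and invokes the same geometric-sum orthogonality, only there it sieves the frequency indices $r \equiv \omega \pmod{d/s}$ rather than the spatial indices $m \equiv 0 \pmod{s}$, so the two computations are mirror images of one another.
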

\begin{proof} Let $d \in \mathbb{N}$ and suppose $s \in \mathbb{N}$ divides $d$. Let $\b{x} \in \mathbb{C}^{d}$ and $\omega \in [\tfrac{d}{s}]_0$ be arbitrary. By the definition of the discrete Fourier transform and sub-sampling operator, we have that
\begin{align}
\bigg(F_{\frac{d}{s}} (Z_s \b{x})\bigg)_\omega = \sum_{n=0}^{\frac{d}{s} - 1} (Z_s \b{x})_n e^{-\frac{2\pi i n \omega}{d/s}} =  \sum_{n=0}^{\frac{d}{s} - 1} x_{ns} e^{-\frac{2\pi i n \omega s}{d}}
\end{align}
By the inverse DFT and by collecting terms, we have that
\begin{align}
\sum_{n=0}^{\frac{d}{s} - 1} x_{ns} e^{-\frac{2\pi i n \omega s}{d}} = \dfrac{1}{d} \sum_{n=0}^{\frac{d}{s} - 1} \bigg( \sum_{r=0}^{d - 1} \hat{x}_r e^{\frac{2\pi i rn s}{d}}\bigg)e^{-\frac{2\pi i n (r - \omega) s}{d}}
\end{align}
By treating this as a sum of DFTs, we then have that
\begin{align}
\dfrac{1}{d} \sum_{n=0}^{\frac{d}{s} - 1} \bigg( \sum_{r=0}^{d - 1} \hat{x}_r e^{\frac{2\pi i rn s}{d}}\bigg)e^{-\frac{2\pi i n (r - \omega) s}{d}} = \dfrac{1}{s} \sum_{r = 0}^{s - 1} \hat{x}_{\omega + r\frac{d}{s}} = \dfrac{1}{s} \sum_{r = 0}^{s - 1} \hat{x}_{\omega - r\frac{d}{s}}
\end{align}
\end{proof}
Before we start looking at aliased WDD, we need to introduce a lemma which will show the effect of taking a Fourier transform of an autocorrelation.
\begin{Lemma} \textbf{(Fourier Transform Of Autocorrelation)} (\cite{merhi2019phase}, Lemma 2.0.2.)  Let $\b{x} \in \mathbb{C}^{d}$ and $\alpha, \omega \in [d]_0$. Then
\begin{align}
\bigg(\b{F}_d (\b{x} \circ S_\omega \bar{\b{x}})\bigg)_\alpha = \dfrac{1}{d} e^{2\pi i \omega \alpha/d} \bigg(\b{F}_d (\hat{\b{x}} \circ S_{-\alpha} \bar{\hat{\b{x}}})\bigg)_\omega
\end{align}
\end{Lemma}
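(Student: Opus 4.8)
The plan is to prove the identity by a direct computation from the definitions, the engine being the orthogonality relation $\sum_{n=0}^{d-1} e^{2\pi i n j/d} = d\,\mathbbm{1}[\,j\equiv 0 \ (\mathrm{mod}\ d)\,]$. First I would write out the left-hand side from the definition of $\b{F}_d$ together with the paper's shift convention $(S_\omega \bar{\b{x}})_n = \bar{x}_{n+\omega}$ (the same convention used implicitly in the proof of the Decoupling Lemma), obtaining $\big(\b{F}_d(\b{x}\circ S_\omega\bar{\b{x}})\big)_\alpha = \sum_{n=0}^{d-1} x_n \bar{x}_{n+\omega}\, e^{-2\pi i n\alpha/d}$. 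The goal is to re-express everything in terms of $\hat{\b{x}}$, so the next step is to substitute the inverse-DFT representations $x_n = \tfrac1d\sum_{p} \hat{x}_p e^{2\pi i np/d}$ and $\bar{x}_{n+\omega} = \tfrac1d\sum_q \bar{\hat{x}}_q e^{-2\pi i(n+\omega)q/d}$.

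After interchanging the (finite) sums, the factor depending on $n$ separates off as $\sum_{n} e^{2\pi i n(p-q-\alpha)/d}$, which by orthogonality equals $d$ exactly when $p\equiv q+\alpha \pmod d$ and vanishes otherwise. This collapses the double sum over $p,q$ to the single sum $\tfrac1d\sum_q \hat{x}_{q+\alpha}\,\bar{\hat{x}}_q\, e^{-2\pi i \omega q/d}$. The final move is the reindexing $q \mapsto m-\alpha$: this turns $\hat{x}_{q+\alpha}\bar{\hat{x}}_q$ into $\hat{x}_m\bar{\hat{x}}_{m-\alpha}$ and the exponential into $e^{2\pi i\omega\alpha/d}\, e^{-2\pi i\omega m/d}$, so the whole expression factors as $\tfrac1d\, e^{2\pi i \omega\alpha/d}\sum_m \hat{x}_m\bar{\hat{x}}_{m-\alpha}\,e^{-2\pi i m\omega/d}$. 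Recognizing $\bar{\hat{x}}_{m-\alpha} = (S_{-\alpha}\bar{\hat{\b{x}}})_m$ and the remaining sum as $\big(\b{F}_d(\hat{\b{x}}\circ S_{-\alpha}\bar{\hat{\b{x}}})\big)_\omega$ yields exactly the claimed right-hand side.

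I expect the only real obstacle to be bookkeeping rather than anything conceptual: one must stay consistent with the paper's shift convention (so that $S_\omega$ produces $\bar{x}_{n+\omega}$ while $S_{-\alpha}$ produces $\bar{\hat{x}}_{m-\alpha}$), and the reindexing step is what both extracts the phase prefactor $e^{2\pi i\omega\alpha/d}$ and simultaneously swaps the roles of the ``outer'' frequency $\alpha$ and the ``inner'' shift $\omega$ between the two sides. This role-swap is really the content of the lemma, reflecting the symmetry of the autocorrelation (ambiguity function) under the Fourier transform. A minor point to watch is that all index arithmetic is modulo $d$, so the orthogonality indicator should be read as a congruence and the substitution $q\mapsto m-\alpha$ as a bijection of $[d]_0$ onto itself; neither affects the outcome, but both should be stated to keep the argument rigorous.
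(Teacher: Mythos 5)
Your proof is correct, and it takes a genuinely different (more self-contained) route than the paper's. You verify the identity from scratch: expand both factors of the autocorrelation via the inverse DFT, collapse the resulting triple sum with the orthogonality relation $\sum_n e^{2\pi i n j/d} = d\,\mathbbm{1}[j \equiv 0 \ (\mathrm{mod}\ d)]$, and reindex to extract the phase. The paper instead works at the level of its toolbox lemmas: it first applies part (ii) of the Discretized Convolution Theorem to write
\begin{align*}
\bigl(\mathbf{F}_d(\mathbf{x}\circ S_\omega\bar{\mathbf{x}})\bigr)_\alpha = \tfrac{1}{d}\bigl(\hat{\mathbf{x}} *_d \mathbf{F}_d(S_\omega\bar{\mathbf{x}})\bigr)_\alpha,
\end{align*}
then invokes technical equality (iii) to replace $\mathbf{F}_d(S_\omega\bar{\mathbf{x}})$ by $W_\omega\widehat{\bar{\mathbf{x}}}$, expands the single convolution sum, and finally uses the reversal and conjugation identities (equality (vi) together with $\tilde{\tilde{\mathbf{x}}}=\mathbf{x}$) to convert $\widehat{\bar{x}}_{\alpha-n}$ into $\bar{\hat{x}}_{n-\alpha}$ before pulling out the phase $e^{2\pi i\omega\alpha/d}$. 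The two arguments meet at the same penultimate expression $\tfrac{1}{d}\, e^{2\pi i\omega\alpha/d}\sum_n \hat{x}_n\bar{\hat{x}}_{n-\alpha}\,e^{-2\pi i\omega n/d}$, and in both it is the reindexing step that swaps the roles of the frequency $\alpha$ and the shift $\omega$. What the paper's route buys is modularity and brevity: only one convolution sum ever appears, and the conjugate-reversal bookkeeping is delegated to already-proved lemmas (though, as a side note, its displayed manipulations contain exponent typos that your clean derivation avoids). What your route buys is independence from that toolbox: nothing is needed beyond DFT inversion and orthogonality, at the price of a triple sum and the mod-$d$ caveats you rightly flag (the orthogonality indicator as a congruence, and $q\mapsto m-\alpha$ as a bijection of $[d]_0$). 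Your reading of the shift convention, $(S_k\mathbf{x})_n = x_{n+k}$, matches the one the paper uses implicitly in the Decoupling Lemma, so the computation goes through exactly as you describe.
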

\begin{proof} Let $\b{x} \in \mathbb{C}^{d}$ and let $\alpha, \omega \in [d]_0$ be arbitrary. By the convolution theorem, we have that
\begin{align}
\bigg(\b{F}_d (\b{x} \circ S_\omega \bar{\b{x}})\bigg)_\alpha = \dfrac{1}{d} (\hat{\b{x}} *_d \b{F}_d (S_\omega \bar{\b{x}}))_\alpha
\end{align}
By technical equality (iii), we can revert the Fourier transform of the shift operator to the modulation operator of the Fourier transform
\begin{align}
\dfrac{1}{d} (\hat{\b{x}} *_d \b{F}_d (S_\omega \bar{\b{x}}))_\alpha &= \dfrac{1}{d} (\hat{\b{x}} *_d (W_\omega \hat{\bar{\b{x}}})_\alpha &= \dfrac{1}{d} \sum_{n=0}^{d - 1} \hat{x}_n (W_\omega \hat{\bar{\b{x}}})_{\alpha - n} = \dfrac{1}{d} \sum_{n=0}^{d - 1} \hat{x}_n \hat{\bar{x}}_{\alpha - n} e^{\frac{2\pi i \omega(\alpha - n)}{d}}
\end{align}
with the latter equalities being the definition of the convolution and modulation. By applying reversals and using that $\tilde{\tilde{\b{x}}} = \b{x}$, we have that
\begin{align}
\dfrac{1}{d} \sum_{n=0}^{d - 1} \hat{x}_n \hat{\bar{x}}_{\alpha - n} e^{\frac{2\pi i \omega(\alpha - n)}{d}} = \dfrac{1}{d} e^{\frac{2\pi i \omega\alpha}{d}}\sum_{n=0}^{d - 1} \hat{x}_n \tilde{\hat{\bar{x}}}_{n - \alpha} e^{-\frac{2\pi i \omega\alpha}{d}}
\end{align}
Finally by applying technical equality (vi) and using the definition of the shift operator and Hadamard product, we have that
\begin{align}
\dfrac{1}{d} e^{\frac{2\pi i \omega\alpha}{d}}\sum_{n=0}^{d - 1} \hat{x}_n \tilde{\hat{\bar{x}}}_{n - \alpha} e^{-\frac{2\pi i \omega\alpha}{d}} = \dfrac{1}{d} e^{\frac{2\pi i \omega\alpha}{d}}\sum_{n=0}^{d - 1} \hat{x}_n \bar{\hat{x}}_{n - \alpha} e^{-\frac{2\pi i \omega\alpha}{d}} = \dfrac{1}{d} e^{2\pi i \omega \alpha/d} \bigg(\b{F}_d (\hat{\b{x}} \circ S_{-\alpha} \bar{\hat{\b{x}}})\bigg)_\omega
\end{align}
\end{proof}

\subsection{Sub-Sampling In Frequency}
We will first look at sub-sampling in frequency.
\begin{Definition} Let $K$ be a positive factor of $d$, and assume that the data is measured at $K$ equally spaced Fourier modes. We denote the set of Fourier modes of step-size $\frac{d}{k}$ by
\begin{align}
\mathcal{K} = \frac{d}{K}[K]_0 = \bigg\{0, \frac{d}{K},\frac{2d}{K}, \ldots, d - \frac{d}{K}\bigg\}
\end{align}
\end{Definition}
\begin{Definition} Let $\b{A} \in \mathbb{C}^{d \times d}$ with columns $\b{a}_j$, $K \mid d$. We denote by $\b{A}_{K,d} \in \mathbb{C}^{K \times d}$ the sub-matrix of $\b{A}$ whose $\ell^{\text{th}}$ column is equal to $Z_{\frac{d}{K}} (\b{a}_\ell)$.
\end{Definition}
With these definitions, we will now convert the sub-sampled measurements into a more solvable form.
\begin{Lemma} (\cite{merhi2019phase}, Lemma 2.1.1.) Suppose that the noisy spectrogram measurements are collected on a subset $\mathcal{K} \subseteq [d]_0$ of $K$ equally space Fourier modes. Then for any $\omega \in [K]_0$
\begin{align*}
\bigg((\b{F}_K \b{Y}_{K,d})^T\bigg)_\omega = K\sum_{r = 0}^{\frac{d}{K} - 1}(\b{x} \circ S_{\omega - rK}\bar{\b{x}}) *_d (\tilde{\b{m}} \circ S_{rK - \omega} \bar{\tilde{\b{m}}}) + \bigg((\b{F}_K \b{N}_{K,d})^T\bigg)_\omega 
\end{align*}
where $\b{Y}_{K,d} \in \mathbb{C}^{K \times d} - \b{N}_{K,d} \in \mathbb{C}^{K \times d}$ is the matrix of sub-sampled noiseless $K \cdot d$ measurements.
\end{Lemma}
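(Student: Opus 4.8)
The plan is to reduce the claim to the full-sampling identity already established in Theorem \ref{thm: BlindPtych Measurements} (the Discretized Wigner Distribution Deconvolution theorem) by passing it through the Aliasing Lemma. First I would unwind the definition of $\b{Y}_{K,d}$: its $\ell$-th column is $Z_{d/K}(\b{y}_\ell)$, so collecting the data on $K$ equally spaced Fourier modes is exactly applying the sub-sampling operator $Z_{d/K}$ to each column of $\b{Y}$. Consequently the $(\omega,\ell)$ entry of $\b{F}_K\b{Y}_{K,d}$ is $(\b{F}_K Z_{d/K}\b{y}_\ell)_\omega$, and the $\omega$-th column of its transpose is the vector in the variable $\ell$ given by $\ell\mapsto (\b{F}_K Z_{d/K}\b{y}_\ell)_\omega$. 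The whole proof is then a per-entry computation in $\ell$, which at the end I read off as a vector identity.

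The key step is to apply the Aliasing Lemma with $s=d/K$, so that $d/s = K$, the alias index $r\,d/s$ equals $rK$, and the prefactor $1/s$ equals $K/d$. This gives
\[
(\b{F}_K Z_{d/K}\b{y}_\ell)_\omega = \frac{K}{d}\sum_{r=0}^{d/K-1}(\b{F}_d\b{y}_\ell)_{\omega-rK}.
\]
The aliasing frequencies $\omega-rK$ are precisely the shifts that appear in the statement, which is the reason the right-hand side is a sum over $r$ of shifted autocorrelations rather than a single term.

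Next I would substitute the per-column identity recorded inside the proof of Theorem \ref{thm: BlindPtych Measurements}, namely
\[
(\b{F}_d\b{y}_\ell)_k = d\cdot\big((\b{x}\circ S_k\bar{\b{x}}) *_d (\tilde{\b{m}}\circ S_{-k}\bar{\tilde{\b{m}}})\big)_\ell + (\b{F}_d\mathbf{N}_\ell)_k,
\]
evaluated at $k=\omega-rK$. The factor $d$ cancels the $K/d$ prefactor to leave the clean constant $K$ in front of the convolution sum, and the mask shift $S_{-(\omega-rK)}=S_{rK-\omega}$ emerges exactly as written. Summing over $r$ then produces the signal part of the claim for each fixed $\ell$.

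Finally I would collect the noise: the residual sum $\frac{K}{d}\sum_{r}(\b{F}_d\mathbf{N}_\ell)_{\omega-rK}$ is, by the very same Aliasing Lemma applied to the noise columns, precisely $(\b{F}_K Z_{d/K}\mathbf{N}_\ell)_\omega$, i.e. the $(\omega,\ell)$ entry of $\b{F}_K\b{N}_{K,d}$ and hence the $\ell$-th entry of the $\omega$-th column of $(\b{F}_K\b{N}_{K,d})^T$. Since the resulting scalar identity holds for every $\ell\in[d]_0$, it lifts to the claimed vector equation. The only genuine hazard is the index bookkeeping across the transpose — keeping straight that $\omega$ labels the sub-sampled frequency while $\ell$ labels the physical shift, and that transposition swaps their roles — together with tracking the sign of the alias shift ($\omega-rK$ on the object versus $rK-\omega$ on the mask); none of this requires a new idea, so I expect the argument to be a careful substitution rather than an obstacle.
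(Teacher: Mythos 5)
Your proposal is correct and follows essentially the same route as the paper's own proof: sub-sample each column of $\b{Y}$ via $Z_{d/K}$, apply the Aliasing Lemma with $s = d/K$ to express $(\b{F}_K Z_{d/K}\b{y}_\ell)_\omega$ as $\tfrac{K}{d}\sum_r (\b{F}_d \b{y}_\ell)_{\omega - rK}$, substitute the per-column Wigner distribution deconvolution identity from Theorem \ref{thm: BlindPtych Measurements} at $k = \omega - rK$ so that the factor $d$ cancels against $K/d$, and treat the noise columns by the identical aliasing argument before lifting the entrywise identity in $\ell$ to the stated column identity. The only cosmetic difference is ordering — the paper writes the decoupled convolution form of $\b{y}_\ell$ first and then aliases, whereas you alias first and then substitute — which changes nothing of substance.
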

\begin{proof} For $\ell \in [d]_0$, the $\ell^{\text{th}}$ column of the matrix $\b{Y}$ is
\begin{align}
\b{y}_\ell = \b{F}_d \bigg( (\b{x} \circ S_{-\ell}\b{m}) *_d (\bar{\tilde{\b{x}}} \circ S_\ell \bar{\tilde{\b{m}}})\bigg) + \eta_\ell
\end{align}
and thus for any $\alpha \in [K]_0$
\begin{align}
\bigg(Z_{\frac{d}{K}} (\b{y}_\ell)\bigg)_\alpha = \bigg(\b{F}_d \bigg( (\b{x} \circ S_{-\ell}\b{m}) *_d (\bar{\tilde{\b{x}}} \circ S_\ell \bar{\tilde{\b{m}}})\bigg)\bigg)_{\alpha \frac{d}{K}} + \bigg(Z_{\frac{d}{K}} (\b{\eta}_\ell)\bigg)_\alpha
\end{align}
and by aliasing lemma (with $s = \frac{d}{K}$)
\begin{align}
\bigg(\b{F}_K\bigg(Z_{\frac{d}{K}} (\b{y}_\ell)\bigg)_\alpha\bigg)_\omega &= \dfrac{K}{d}\sum_{r = 0}^{\frac{d}{K} - 1} (\hat{\b{y}}_\ell)_{\omega - rK}&\\
&= d \cdot \dfrac{K}{d}\sum_{r = 0}^{\frac{d}{K} - 1} \bigg( (\b{x} \circ S_{\omega - rK}\bar{\b{x}}) *_d (\tilde{\b{m}} \circ S_{rK - \omega} \bar{\tilde{\b{m}}})\bigg)_\ell + \bigg(\b{F}_K\bigg(Z_{\frac{d}{K}} (\b{\eta}_\ell)\bigg)_\alpha\bigg)_\omega&
\end{align}
The $\ell^{\text{th}}$ column of $\b{Y}_{K,d} \in \mathbb{C}^{K \times d}$ is equal to $Z_{\frac{d}{K}} (\b{y}_\ell)$. Then for any $\omega \in [K]_0$, the $\omega^{\text{th}}$ column of $(\b{F}_K Y_{K,d})^T \in \mathbb{C}^{d \times K}$  may be computed as 
\begin{align}
\bigg( (\b{F}_K \b{Y}_{K,d})^T\bigg)_\omega = K\sum_{r = 0}^{\frac{d}{K} - 1}(\b{x} \circ S_{\omega - rK}\bar{\b{x}}) *_d (\tilde{\b{m}} \circ S_{rK - \omega} \bar{\tilde{\b{m}}}) + \bigg( (\b{F}_K \b{N}_{K,d})^T\bigg)_\omega \in \mathbb{C}^{d}
\end{align}
\end{proof}
\subsection{Sub-Sampling In Frequency And Space}
We will now look at sub-sampling in both frequency and space.
\begin{Definition} Let $L$ be a positive factor of $d$. Suppose measurements are collected at $L$ equally spaced physical shifts of step-size $\frac{d}{L}$. We denote the set of shifts by $\mathcal{L}$, that is
\begin{align}
\mathcal{L} = \frac{d}{L}[L]_0 = \bigg\{0, \frac{d}{L},\frac{2d}{L}, \ldots, d - \frac{d}{L}\bigg\}
\end{align}
\end{Definition}
\begin{Definition} Let $\b{A} \in \mathbb{C}^{d \times d}$, $L \mid d$. We denote by $\b{A}_{d,L} \in \mathbb{C}^{d \times L}$ the sub-matrix of $\b{A}$ whose rows are those of $\b{A}$, sub-sampled in step-size $\frac{d}{L}$.
\end{Definition}
We will now prove a similar lemma as before, but now we will sub-sample in both frequency and space.
\begin{Lemma} (\cite{merhi2019phase}, Lemma 2.1.2.)  Suppose we have noisy spectrogram measurements collected on a subset $\mathcal{K} \subseteq [d]_0$ of $K$ equally spaced frequencies and a subset $\mathcal{L} \subseteq [d]_0$ of $L$ equally spaced physical shifts. Then for any $\omega \in [K]_0, \alpha \in [L]_0$
\begin{align*}
\bigg(\b{F}_L \bigg(\b{Y}_{K,L}^T (\b{F}_{K}^T)_\omega\bigg)\bigg)_\alpha &= \dfrac{KL}{d} \sum_{r = 0}^{\frac{d}{K} - 1} \sum_{\ell = 0}^{\frac{d}{L} - 1}\bigg(\b{F}_d ((\b{x} \circ S_{\omega - rK}\bar{\b{x}}) *_d (\tilde{\b{m}} \circ S_{rk - \omega} \bar{\tilde{\b{m}}}))\bigg)_{\alpha - \ell L}&\\
&+ \bigg(\b{F}_L \bigg(\b{N}_{K,L}^T (\b{F}_{K}^T)_\omega\bigg)\bigg)_\alpha&
\end{align*}
where $\b{Y}_{K,L} - \b{N}_{K,L} \in \mathbb{C}^{K \times L}$ is the matrix of sub-sampled noiseless $K \cdot L$ measurements.
\end{Lemma}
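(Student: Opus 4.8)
The plan is to reduce this statement to the preceding \textbf{Sub-Sampling In Frequency} lemma by carrying out a second sub-sampling, this time in the physical-shift variable, and then invoking the aliasing lemma a second time. The frequency lemma already handles the aliasing produced by restricting to $K$ equally spaced Fourier modes; all that remains is to feed its output through the same machinery in the shift index.

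First I would fix $\omega \in [K]_0$ and recall from the Sub-Sampling In Frequency lemma that the $\omega^{\text{th}}$ column of $(\b{F}_K \b{Y}_{K,d})^T$ is the vector $\b{v}^{(\omega)} \in \mathbb{C}^d$ given, entrywise in the shift index $\ell \in [d]_0$, by
\begin{align*}
\b{v}^{(\omega)} = K\sum_{r = 0}^{\frac{d}{K} - 1}(\b{x} \circ S_{\omega - rK}\bar{\b{x}}) *_d (\tilde{\b{m}} \circ S_{rK - \omega} \bar{\tilde{\b{m}}}) + \bigg((\b{F}_K \b{N}_{K,d})^T\bigg)_\omega.
\end{align*}
Next I would argue that sub-sampling in space and the $K$-point DFT in frequency act on independent indices and hence commute: $\b{F}_K$ acts on the frequency (row) index of $\b{Y}_{K,\cdot}$, whereas the space sub-sampling $Z_{\frac{d}{L}}$ acts on the shift (column) index. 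Consequently the $\omega^{\text{th}}$ column of $(\b{F}_K \b{Y}_{K,L})^T$, namely $\b{Y}_{K,L}^T (\b{F}_K^T)_\omega$, is exactly $Z_{\frac{d}{L}}(\b{v}^{(\omega)})$.

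Then I would apply $\b{F}_L$ to $Z_{\frac{d}{L}}(\b{v}^{(\omega)})$ and invoke the aliasing lemma with $s = \frac{d}{L}$ (so that $\frac{d}{s} = L$) to obtain, for $\alpha \in [L]_0$,
\begin{align*}
\bigg(\b{F}_L \big(Z_{\frac{d}{L}} \b{v}^{(\omega)}\big)\bigg)_\alpha = \frac{L}{d}\sum_{\ell = 0}^{\frac{d}{L} - 1} \big(\b{F}_d \b{v}^{(\omega)}\big)_{\alpha - \ell L}.
\end{align*}
Substituting the expression for $\b{v}^{(\omega)}$, using linearity of $\b{F}_d$ to pass it through the $r$-sum, interchanging the two finite sums, and collecting the scalar $K \cdot \frac{L}{d} = \frac{KL}{d}$ yields the claimed double sum over $r$ and $\ell$; the noise contribution propagates through the identical two operations to give the stated residual $\big(\b{F}_L(\b{N}_{K,L}^T (\b{F}_K^T)_\omega)\big)_\alpha$.

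The routine parts are linearity and the interchange of finite sums. The main obstacle I anticipate is the commutation step, i.e. making precise that $Z_{\frac{d}{L}}$ (acting on shifts) and $\b{F}_K$ (acting on frequencies) genuinely commute so that the frequency lemma can be reused verbatim before the aliasing lemma is applied in the shift variable, together with keeping the two distinct aliasing indices ($r$ for frequency with step $K$, $\ell$ for space with step $L$) and their respective step sizes correctly aligned throughout.
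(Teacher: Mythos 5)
Your proposal is correct and follows essentially the same route as the paper's proof: the paper likewise fixes $\omega$, forms the vector $\b{p}_\omega$ whose entries are the space-sub-sampled outputs of the frequency lemma, identifies it with $\b{Y}_{K,L}^T(\b{F}_K^T)_\omega$ plus the noise term (your commutation step, which the paper phrases as "the rows of $\b{Y}_{K,L}$ are those of $\b{Y}_{K,d}$ sub-sampled in step-size $\frac{d}{L}$"), and then applies the aliasing lemma a second time with $s = \frac{d}{L}$ to collect the factor $K\cdot\frac{L}{d}=\frac{KL}{d}$ and the double sum. The commutation concern you flag is indeed the only delicate point, and your resolution of it is exactly the paper's.
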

\begin{proof} For fixed $\ell \in [d]_0, \omega \in [K]_0$, we have computed
\begin{align*}
\bigg(\b{F}_K (Z_{\frac{d}{K}} (\b{y}_\ell)) \bigg)_\omega = K\sum_{r = 0}^{\frac{d}{K} - 1} \bigg((\b{x} \circ S_{\omega - rK}\bar{\b{x}}) *_d \b{F}_d (\tilde{\b{m}} \circ S_{rk _ \omega} \bar{\tilde{\b{m}}})\bigg)_\ell
\end{align*}
Fix $\omega \in [K]_0$, and define the vector $\b{p}_\omega \in \mathbb{C}^{L}$ by
\begin{align*}
(\b{p}_\omega)_\ell := \bigg( \b{F}_K (Z_{\frac{d}{K}} (\b{y}_{\ell \frac{d}{L}}))\bigg)_\omega + \bigg( \b{F}_K (Z_{\frac{d}{K}} (\b{\eta}_{\ell \frac{d}{L}}))\bigg)_\omega, \quad \forall \ell \in [L]_0
\end{align*}
Note that the rows of $\b{Y}_{K,L}, \b{N}_{K,L} \in \mathbb{C}^{K \times L}$ are those of $\b{Y}_{K,L}, \b{N}_{K,L} \in \mathbb{C}^{K \times d}$, sub-sampled in step-size of $\frac{d}{L}$. Thus
\begin{align*}
(\b{p}_\omega)_\ell =\bigg( (\b{Y}_{K,L})^T (\b{F}_{K}^T)_\omega\bigg)_\ell + \bigg( (\b{Y}_{K,L})^T (\b{F}_{K}^T)_\omega\bigg)_\ell
\end{align*}
where $(\b{F}_{K}^{T})_\omega \in \mathbb{C}^{K}$ is the $\omega^{\text{th}}$ column of $\b{F}_{K}^{T}$. Therefore
\begin{align*}
\b{p}_\omega = \b{Y}_{K,L}^T (\b{F}_{K}^T)_\omega +  \b{N}_{K,L}^T (\b{F}_{K}^T)_\omega \in \mathbb{C}^{L}, \quad \forall \omega \in [K]_0
\end{align*}
For any $\ell \in [L]_0$, we have
\begin{align*}
(\b{p}_\omega)_\ell &= K\sum_{r = 0}^{\frac{d}{K} - 1} \bigg((\b{x} \circ S_{\omega - rK}\bar{\b{x}}) *_d \b{F}_d (\tilde{\b{m}} \circ S_{rk _ \omega} \bar{\tilde{\b{m}}})\bigg)_{\ell\frac{d}{L}} +  \b{N}_{K,L}^T (\b{F}_{K}^T)_\omega&\\
&= K \cdot \bigg(Z_{\frac{d}{L}} \bigg( \sum_{r = 0}^{\frac{d}{K} - 1} (\b{x} \circ S_{\omega - rK}\bar{\b{x}}) *_d \b{F}_d (\tilde{\b{m}} \circ S_{rk _ \omega} \bar{\tilde{\b{m}}})\bigg)\bigg)_\ell +  \b{N}_{K,L}^T (\b{F}_{K}^T)_\omega&
\end{align*}
and for any $\alpha \in [L]_0$, by aliasing, we have that
\begin{align*}
(\b{F}_L \b{p}_\omega)_\alpha = \dfrac{KL}{d} \sum_{r = 0}^{\frac{d}{K} - 1} \sum_{\ell = 0}^{\frac{d}{L} - 1}\bigg(\b{F}_d ((\b{x} \circ S_{\omega - rK}\bar{\b{x}}) *_d (\tilde{\b{m}} \circ S_{rk - \omega} \bar{\tilde{\b{m}}}))\bigg)_{\alpha - \ell L} + \bigg(\b{F}_L \bigg(\b{N}_{K,L}^T (\b{F}_{K}^T)_\omega\bigg)\bigg)_\alpha
\end{align*}
\end{proof}
\clearpage
\section{Alternative Approach}
\vspace{-5mm}
In this section we discuss the convex relaxation approach studied in \cite{ahmed2013blind}.
\subsection{Convex Relaxation} \label{sec: Convex Relaxation}
In \cite{ahmed2013blind}, the approach is to solve a convex version of the problem. Given $\b{y} \in \mathbb{C}^L$, their goal is to find $\b{h} \in \mathbb{R}^k$, $\b{x} \in \mathbb{R}^N$ that are consistent with the observations. Making no additional assumptions other than the dimensions, the way to choose between multiple feasible points is by solving using least-squares. That is,
\begin{align*}
\text{minimize}_{\b{u},\b{v}} \; \Vert\b{u}\Vert_{2}^{2} + \Vert\b{v}\Vert_{2}^{2} \qquad \text{subject to} \;\; \b{y}(\ell) = \langle \b{c}_\ell, \b{u}\rangle \langle \b{v}, \b{b}_\ell \rangle, \quad 1 \leq \ell \leq L
\end{align*}
This is a non-convex quadratic optimization problem. The cost function is convex, but the quadratic equality constraints mean that the feasible set is non-convex. The dual of this minimization problem is the SDP and taking the dual again will give us a convex program 
\begin{align*}
\min_{\b{W}_1, \b{W}_2, \b{X}} \tfrac{1}{2}tr(\b{W}_1) + \tfrac{1}{2} tr(\b{W}_2) \qquad \text{subject to} 
\begin{bmatrix}
\b{W}_1 & \b{X}\\
\b{X}^* & \b{W}_2\\
\end{bmatrix}
\succeq 0, \b{y}= \mathcal{A}(\b{X})
\end{align*}
which is equivalent to 
\begin{align*}
\min \Vert\b{X}\Vert_*  \qquad \text{subject to} \;\; \b{y} = \mathcal{A}(\b{X})
\end{align*}
where $\Vert\b{X}\Vert^* = tr(\sqrt{\b{X}^* \b{X}})$ denotes the nuclear norm.
In \cite{ahmed2013blind}, they achieved guarantees for relatively large $K$ and $N$, when $B$ is incoherent in the Fourier domain, and when $C$ is generic. We can now outline the algorithm from \cite{ahmed2013blind}.
\begin{algorithm}[H]
\caption{ Convex Relaxed Blind Deconvolution Algorithm}
\begin{algorithmic}
\Require Normalized Fourier measurement $\b{y}$, 
\Ensure Estimate underlying signal and blurring function
\State 1) Compute $\mathcal{A}^* (\b{y})$
\State 2) Find the leading singular value, left and right singular vectors of $\mathcal{A}^* (\b{y})$, denoted by $d$, $\tilde{\b{h_0}}$, and $\tilde{\b{x_0}}$ respectively
\State 3) Let $\b{X}_0 = \tilde{\b{h}_{0}}\tilde{\b{x}_{0}}^*$ denote the initial estimate and solve the following optimization problem
\begin{alignat}{3}
&\min \quad &&\Vert\b{X}\Vert_{*}&&\nonumber\\
&\text{subject to} \quad &&\Vert\b{y} - \c{A}(\b{X})\Vert \leq \delta&&
\end{alignat}
where $\Vert\cdot\Vert_{*}$ denotes the nuclear norm and $\Vert\b{e}\Vert_{2} \leq \delta$
\State Return $(\b{h},\b{x})$ for $\b{X} = \b{hx}^*$
\end{algorithmic}
\end{algorithm}

\end{appendices}

\bibliography{ThesisBib}
\end{document}